\documentclass[11pt]{article}
\usepackage[a4paper,margin=3cm]{geometry}
\usepackage[T1]{fontenc}      
\usepackage{lmodern}          
\usepackage{amsmath}          
\usepackage{amssymb}          
\usepackage{amsthm}           
\usepackage{mathtools}        
\usepackage{tikz}             
\usetikzlibrary{decorations.markings}
\usepackage{placeins}         
\usepackage{needspace}        
\usepackage[hidelinks]{hyperref} 
\hypersetup{
  pdftitle={Mixed partition functions are exactly the graph
    parameters of exponentially bounded edge-connection rank},
  pdfauthor={William Whistler},
  pdfsubject={Graph parameters, connection matrices, and mixed
    partition functions},
  pdfkeywords={graph parameter, connection matrix, partition
    function, mixed partition function, edge-colouring model,
    symmetric tensor category, super vector space, fibre functor,
    Schur functor}}

\newtheorem{theo}{Theorem}[section]
\newtheorem{lemm}[theo]{Lemma}
\newtheorem{coro}[theo]{Corollary}

\theoremstyle{definition}
\newtheorem{defi}[theo]{Definition}
\theoremstyle{plain}
\let\oldthebibliography\thebibliography
\renewcommand{\thebibliography}[1]{\oldthebibliography{#1}%
  \setlength{\itemsep}{0pt}}

\newcommand{\emptygraph}{\varnothing}
\newcommand{\freecircle}{\bigcirc}
\newcommand{\Frag}[1]{\mathfrak{F}_{#1}}          
\newcommand{\connmat}[2]{M_{#1,#2}}               
\newcommand{\glue}{\ast}                          
\newcommand{\strand}{I}                           
\newcommand{\halfedges}[2]{\delta_{#1}(#2)}       
\newcommand{\conncat}{\mathcal{C}_{f}}            
\newcommand{\cauchy}{\mathcal{D}_{f}}             
\newcommand{\unitobj}{\mathbf{1}}                 
\newcommand{\genX}{X}                             
\newcommand{\Kar}{\operatorname{Kar}}
\newcommand{\Add}{\operatorname{Add}}
\newcommand{\End}{\operatorname{End}}
\newcommand{\Hom}{\operatorname{Hom}}
\newcommand{\tr}{\operatorname{tr}}
\newcommand{\rk}{\operatorname{rk}}
\newcommand{\fib}{\omega}                         
\newcommand{\svect}{\mathrm{sVect}_{\mathbb{C}}}
\newcommand{\str}{\operatorname{str}}
\newcommand{\parityop}{\mathfrak{p}}              
\newcommand{\copair}{C}                           
\newcommand{\twistedC}{\widehat{C}}               
\newcommand{\mate}[1]{#1^{\flat}}                 
\newcommand{\symq}[1]{\varpi_{#1}}                
\newcommand{\SymV}{\operatorname{Sym}}
\newcommand{\parinv}{J}                           
\newcommand{\hook}[2]{H(#1,#2)}                   
\newcommand{\tzeta}[1]{\zeta_{#1}}                

\newcommand{\growthbase}{R_{f}}                   
\newcommand{\schur}[2]{S_{#1}(#2)}                
\newcommand{\onerow}[1]{s_{(#1)}}                 

\title{Mixed partition functions are exactly the graph parameters of
exponentially bounded edge-connection rank}
\author{William Whistler\\
\texttt{will@whistler.uk}}
\date{}

\begin{document}
\maketitle

\begin{abstract}
We prove a conjecture of Regts and Sevenster: a complex-valued graph
parameter $f$ with $f(\emptygraph)=1$ has exponentially bounded
edge-connection rank if and only if it is a mixed partition function.
The bound is exact: for a real number $R\ge1$, the connection ranks
satisfy $\rk\connmat{f}{t}\le R^{t}$ for all $t\ge0$ if and only if $f$
has a model on a super vector space $\mathbb{C}^{k|2\ell}$ with
$k+2\ell\le R$. Consequently the base of exponential growth of the
connection ranks is the least number of colours of a model, the two
dimensions of a minimal model are determined by $f$, and the
parameters with a model on a prescribed $\mathbb{C}^{k|2\ell}$ are
characterised. The proof organises fragments modulo the connection
kernel into a rigid symmetric tensor category whose morphism spaces
have the connection ranks as dimensions; the rank hypothesis and an
argument of Schrijver make its additive idempotent completion
semisimple, Deligne's theorem provides a fibre functor to super
vector spaces, and the resulting super tensor network is identified
with the Regts--Sevenster model exactly, circuit signs included. An
appendix shows that in a rigid symmetric $\mathbb{C}$-linear
category with $\End(\unitobj)=\mathbb{C}$, exponentially bounded
endomorphism growth makes the trace zeta function of every
endomorphism rational, with explicit degree bounds.

\medskip
\noindent\textit{MSC 2020:} 05C31, 18M05 (primary); 15A72, 05E05,
20C30 (secondary).

\noindent\textit{Keywords:} graph parameter; connection matrix;
partition function; mixed partition function; edge-colouring
model; symmetric tensor category; super vector space; fibre
functor; Schur functor.
\end{abstract}

\tableofcontents

\section{Introduction}\label{sec:intro}

Which graph parameters are partition functions? The question has
organised a substantial body of work since de la Harpe and Jones
\cite{dHJ93} introduced spin and vertex models on graphs as
generalisations of the models of statistical mechanics --- in
later terminology, vertex-colouring and edge-colouring models
respectively. Freedman,
Lov\'asz and Schrijver \cite{FLS07} characterised the partition
functions of vertex-colouring models by reflection positivity and
a rank condition on connection matrices; Szegedy \cite{Sze07} did
the same for edge-colouring models, with the complex case
characterised in \cite{DGLRS12}; and Schrijver characterised
the partition functions of spin models \cite{Sch13} and of
complex edge-colouring models \cite{Sch15} by rank growth.
The connection-matrix method is treated at length in Lov\'asz's
book \cite{Lov12}, and its categorical undercurrent --- that
connection matrices are Gram matrices of a category of fragments
--- goes back at least to Lov\'asz and Schrijver \cite{LS09}.

Regts and Sevenster \cite{RS21}, building on their symplectic
models \cite{RS17}, enlarged the edge-colouring world by a
fermionic sector: their \emph{mixed partition functions},
recalled in Section~\ref{subsec:mpf}, are partition functions of
edge-colouring models on a super vector space
$\mathbb{C}^{k|2\ell}$, and they include, for instance, every
evaluation of the characteristic polynomial
\cite[Prop.~10]{RS21}; the evaluation at zero, in particular, is
not the partition function of any ordinary edge-colouring model
\cite[Prop.~9]{RS21}. They proved that every mixed
partition function has exponentially bounded edge-connection rank
\cite[Thm~6]{RS21} and conjectured the converse. A 2017 extended
abstract had announced the characterisation as a theorem
\cite[Thm~3.1]{RS17ea}; the full paper withdraws the announcement
and states the converse as a conjecture
\cite[abstract and footnotes~1--2]{RS21}, the intended route
through the invariant theory of the orthosymplectic supergroup
having met an obstruction, discussed below.

This paper proves the conjecture. The slogan is that a purely
combinatorial exponential-rank condition forces a hidden
finite-dimensional super tensor model; the precise statement is
as follows.

\begin{theo}[Main Theorem]\label{theo:main}
A complex-valued graph parameter $f$, defined on finite multigraphs
with free circles permitted and normalised by $f(\emptygraph)=1$, has
exponentially bounded edge-connection rank if and only if it is a
mixed partition function. More precisely, for a real number
$R\ge 1$, we have $\rk \connmat{f}{t}\le R^{t}$ for all $t\ge 0$ if
and only if $f=p_{h}$ for a functional
$h\in(\SymV\mathbb{C}^{k}\otimes\Lambda\mathbb{C}^{2\ell})^{*}$
with $k+2\ell\le R$.
\end{theo}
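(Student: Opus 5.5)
The easy direction I would take essentially from \cite[Thm~6]{RS21}, sharpened to the exact bound. Suppose $f=p_h$ with $h$ a functional on $\SymV\mathbb{C}^{k}\otimes\Lambda\mathbb{C}^{2\ell}$, and write $V=\mathbb{C}^{k|2\ell}$. A $t$-labelled fragment then evaluates to a vector in $V^{\otimes t}$, and $f$ of a glued graph is the supertrace pairing of two such vectors. Hence $\connmat{f}{t}$ factors through $V^{\otimes t}$, and its rank is at most $\dim V^{\otimes t}=(k+2\ell)^t\le R^t$. One has to check that the circuit signs in the Regts--Sevenster definition agree with the Koszul signs of the super pairing, so that the factorisation is an honest bilinear identity.

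For the converse I would follow the route announced in the abstract. First I build the category $\conncat$ whose objects are the natural numbers $t$ and whose morphisms $s\to t$ are formal linear combinations of fragments with $s+t$ labelled ends, taken modulo the connection kernel. Composition is gluing, the tensor product is disjoint union, the symmetry permutes ends, and the strand $\strand$ serves as both evaluation and coevaluation. This makes $\conncat$ rigid symmetric with $\End(\unitobj)=\mathbb{C}$, because $f(\emptygraph)=1$ and closed graphs reduce to scalars. By construction $\dim\Hom(0,t)=\rk\connmat{f}{t}$. Next I pass to the additive idempotent completion $\cauchy=\Kar(\Add\conncat)$. To show it is semisimple, I would adapt Schrijver's argument \cite{Sch15}. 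The trace form on each $\End(t)$ is nondegenerate, since the pairing is just the connection matrix with the kernel quotiented out. Combined with the bound $\dim\End(t)\le R^{2t}$, this should force every $\End(t)$ to be semisimple. To see this, note that the radical consists of nilpotent elements, whose traces of powers vanish, and then nondegeneracy kills them. This gives a semisimple, hence abelian, rigid symmetric category of moderate growth: the length of $\genX^{\otimes t}$ is bounded by $\dim\Hom(0,t)$-type quantities, which are at most $R^t$.

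Deligne's theorem then gives a fibre functor $\fib\colon\cauchy\to\svect$. I set $V=\fib(\genX)=\mathbb{C}^{k|2\ell}$. The image of the generating vertex morphisms $\Hom(0,d)$, for $d$ the vertex degrees, yields supersymmetric tensors in $V^{\otimes d}$, that is, elements of $\SymV^d$ of the super space $V$, which is $\SymV\mathbb{C}^k\otimes\Lambda\mathbb{C}^{2\ell}$. Dually this gives a functional $h$. Since $\fib$ is a symmetric monoidal functor, evaluating a closed graph in $\cauchy$ equals the super tensor network contraction in $\svect$. I then identify that contraction with $p_h$ exactly, including the circuit signs, which come from super-transposition on the odd part.

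For the exact dimension bound I use the fact that $\fib$ is faithful. This gives $\rk\connmat{f}{t}=\dim\Hom(0,\genX^{\otimes t})\le\dim(V^{\otimes t})^{\mathrm{inv}}$. For the reverse inequality $k+2\ell\le R$, I would show that $\dim\Hom(0,\genX^{\otimes 2t})$ grows like $(k+2\ell)^{2t}$ up to polynomial factors. This uses that the invariants of the super group scheme image in $V^{\otimes 2t}\otimes V^{*\otimes 2t}$ contain the full $\End(\genX^{\otimes t})$, whose dimension has exponential base $(k+2\ell)^2$ in a semisimple setting, because $\genX^{\otimes t}$ decomposes into simples whose $\fib$-dimensions sum to $(k+2\ell)^t$. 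Taking $t$-th roots then forces $k+2\ell\le R$. I expect the main obstacle to be this sharpness step, together with the sign bookkeeping. If the growth estimate resists, I would fall back on the trace zeta function rationality from the appendix, which pins down the exponential base exactly.
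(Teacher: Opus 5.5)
Your architecture is the paper's: connection category, Cauchy completion, Schrijver, Deligne, star decomposition, sign identification. However, the step carrying the quantitative content, $k+2\ell\le R$, is not supported by what you propose.

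Faithfulness of $\fib$ only bounds $\dim\Hom(\unitobj,\genX^{\otimes t})$ from \emph{above}, which is the wrong direction. The decomposition $\genX^{\otimes t}\cong\bigoplus_i S_i^{\oplus m_i}$ with $\sum_i m_i\dim\fib(S_i)=d^{t}$, $d=k+2\ell$, says nothing about $\dim\End(\genX^{\otimes t})=\sum_i m_i^{2}$ unless you also control the sizes $\dim\fib(S_i)$. A few simples of huge $\fib$-dimension are consistent with that identity and would make the endomorphism algebra small. The zeta-function fallback does not pin down the base either. Corollary~\ref{coro:zetagrowth} with $A=R^{2}$ kills the square $(s^{s})$ only for $s>2eR$, which bounds $k$ and $2\ell$ separately by roughly $2eR$. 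Moreover, trace zeta functions of endomorphisms of $\genX$ carry no information about the exponential base of the ranks.

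The missing idea is to decompose $V^{\otimes n}$ under the symmetric group rather than under $\cauchy$. Write $V^{\otimes n}\cong\bigoplus_\lambda U_\lambda\otimes\schur{\lambda}{V}$ with $m_\lambda=\dim\schur{\lambda}{V}$, so that $d^{n}=\sum_\lambda d_\lambda m_\lambda$.
\begin{itemize}
\item If $m_\lambda\ne0$ then $\Phi_n(e_\lambda)\ne0$ in $\cauchy$; linearity of $\fib$ suffices, not faithfulness. Hence the whole block $\mathrm{Mat}_{d_\lambda}(\mathbb{C})$ embeds in $\End(\genX^{\otimes n})$, and $\sum_{m_\lambda\ne0}d_\lambda^{2}\le\rk\connmat{f}{2n}$.
\item On the other side, $\sum_\lambda m_\lambda^{2}=\dim\End_{S_n}(V^{\otimes n})\le\binom{n+d^{2}-1}{d^{2}-1}$, by counting $S_n$-orbits on pairs of words.
\end{itemize}
Cauchy--Schwarz then gives $d^{2n}\le\rk\connmat{f}{2n}\binom{n+d^{2}-1}{d^{2}-1}\le R^{2n}(n+1)^{d^{2}-1}$, and taking $2n$-th roots gives $d\le R$. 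This same ingredient is what would repair your route through simples, since each $\fib(S_i)$ is a summand of some $\schur{\lambda}{V}$; without it that route does not close.

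The semisimplicity step also has a hole, exactly where (H2) must enter. You say the radical consists of nilpotents ``whose traces of powers vanish''. For categorical traces this is not automatic: without a growth hypothesis there are rigid symmetric categories with nilpotent endomorphisms of nonzero trace. Proving it is the entire content of Schrijver's argument, which goes as follows.
\begin{itemize}
\item Suppose $g$ is nilpotent with $\tr(g)\ne0$, and let $y=g^{t}$ with $t$ maximal such that $\tr(g^{t})\ne0$.
\item By the cycle-trace identity, $\tr(\Phi_m(\pi)\circ y^{\otimes m})$ equals $\tr(y)^{m}$ for $\pi=\mathrm{id}$ and $0$ otherwise.
\item So the $m!$ permutation operators are linearly independent in $\End(Y^{\otimes m})$, contradicting the exponential bound on that space which (H2) supplies.
\end{itemize}
This must be done for every object of $\Add(\conncat)$, not only the $\genX^{\otimes t}$. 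Endomorphism algebras of direct sums of tensor powers contain Hom-spaces between different arities, and semisimplicity of $\cauchy$ needs those too. Both the factorial argument and nondegeneracy extend there (Lemmas~\ref{lemm:growthD} and~\ref{lemm:nondegD}).

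Finally, ``dually this gives $h$'' hides a normalisation. The network built from $\copair=\fib(\mathrm{coev})$ and $\beta_d(T_d,-)$ equals $f$. The Regts--Sevenster sum, however, is the network with $\twistedC=(\parityop\otimes\mathrm{id})\copair$, and the two differ by $(-1)^{|F|}$ termwise. You must absorb this by twisting $h$ by $\parinv$ before the per-circuit count $(-1)^{n}(-1)^{n-1}=-1$ yields $(-1)^{c(\kappa)}$.
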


The implication from mixed partition functions to exponentially
bounded rank is Regts and Sevenster's theorem, with
$R=\max(1,k+2\ell)$; our contribution is the converse, together
with the dimension bound, which matches their bound exactly. The
case $t=0$ of the hypothesis, together with $f(\emptygraph)=1$,
supplies the multiplicativity of $f$ over disjoint unions
(Lemma~\ref{lemm:mult}).

Equality of the two bounds has consequences, developed in
Section~\ref{sec:consequences}. Write
$\growthbase\coloneqq\sup_{t\ge1}(\rk\connmat{f}{t})^{1/t}$ for
the base of exponential growth of the connection ranks, the
\emph{growth base}; this is the edge-rank connectivity of
\cite{RS21}, with the value zero allowed.

\begin{theo}[Model dimensions]\label{theo:introdims}
Let $f(\emptygraph)=1$, suppose that $\rk\connmat{f}{t}\le R^{t}$
for all $t\ge0$ and some real $R\ge1$, and let $k$ and $2\ell$ be
the even and odd dimensions of the model that the proof produces.
Then $\growthbase=k+2\ell=\lim_{n\to\infty}(\rk\connmat{f}{2n})^{1/2n}$,
$f(\freecircle)=k-2\ell$, and the dimension pairs $(k',2\ell')$ of
the models of $f$ are exactly the pairs $(k+2r,\,2\ell+2r)$ with
$r\in\mathbb{N}$.
\end{theo}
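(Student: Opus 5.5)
The plan is to read everything off the semisimple category $\cauchy$ and its fibre functor $\fib$ to $\svect$ constructed in the proof of Theorem~\ref{theo:main}, with $\fib(\genX)=\mathbb{C}^{k|2\ell}$.

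\emph{The circle value.} The free circle $\freecircle$ is the categorical trace of $\mathrm{id}_{\genX}$, that is, the categorical dimension of $\genX$. A symmetric tensor functor preserves categorical dimensions, and in $\svect$ the dimension is the superdimension. Hence $f(\freecircle)=\dim\genX=\mathrm{sdim}\,\mathbb{C}^{k|2\ell}=k-2\ell$.

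\emph{The growth base.} Theorem~\ref{theo:main} gives $\rk\connmat{f}{t}\le(k+2\ell)^t$, since the model exists with these dimensions. So $\growthbase\le k+2\ell$. For the reverse bound I use that $\rk\connmat{f}{2n}=\dim\End_{\cauchy}(\genX^{\otimes n})$, because morphism spaces have the connection ranks as dimensions. I then decompose $\genX^{\otimes n}$ into simple objects using semisimplicity:
\[
\dim\End(\genX^{\otimes n})=\sum_{\lambda}m_\lambda^2\ge\frac{\bigl(\sum_\lambda m_\lambda\dim\fib(L_\lambda)\bigr)^2}{\sum_\lambda(\dim\fib(L_\lambda))^2}.
\]
The numerator is $(k+2\ell)^{2n}$. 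Via Deligne's theorem, $\cauchy$ is $\mathrm{Rep}(G,z)$ for a supergroup $G$ inside $GL(k|2\ell)$. The simple summands of $\genX^{\otimes n}$ are then constituents of mixed Schur functors applied to $\mathbb{C}^{k|2\ell}$. Their number and total dimensions grow only polynomially in $n$, relative to $(k+2\ell)^n$, by a hook-shaped Young diagram count. This yields $\liminf(\rk\connmat{f}{2n})^{1/2n}\ge k+2\ell$. Together with the upper bound and $\rk\connmat{f}{2n}\le\growthbase^{2n}$, the limit exists and equals $\growthbase=k+2\ell$.

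I expect the polynomial control of $\sum_\lambda(\dim\fib L_\lambda)^2$ to be the main obstacle. If the Schur-functor count is awkward, I would instead use Cauchy--Schwarz on the faithful action of $\End(\genX^{\otimes n})$ on $\fib(\genX)^{\otimes n}$. Alternatively I would take a minimal model as in the next paragraph, where $\fib$ on the reduced category is essentially surjective up to parity, and bound via the appendix's trace zeta rationality.

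\emph{Dimension pairs of models.} Let $(k',2\ell')$ be the dimensions of any model of $f$. By Regts--Sevenster, $\rk\connmat{f}{t}\le(k'+2\ell')^t$, so $k'+2\ell'\ge\growthbase=k+2\ell$. Also $k'-2\ell'=f(\freecircle)=k-2\ell$. Subtracting gives $k'=k+2r$ and $2\ell'=2\ell+2r$ with $r\ge0$. Since $r=(k'-k)/2$ and $k'-k=2\ell'-2\ell$ is even, $r$ is a natural number. Conversely, from a model $h$ on $\mathbb{C}^{k|2\ell}$ I build one on $\mathbb{C}^{k+2r|2\ell+2r}$. I adjoin $r$ hyperbolic pairs, each consisting of one even and one odd-symplectic pair (an even $2$-dimensional block together with a $2$-dimensional odd block). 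I choose the extended functional so that it vanishes on all monomials involving the new variables except those pairing an even new colour with its odd partner. These contributions must cancel in every graph. The mechanism is that a new even colour and its paired odd colour contribute circuit weights $+1$ and $-1$, so every edge-colouring using new colours cancels. This is the supertrace cancellation $\mathrm{sdim}\,\mathbb{C}^{1|1}=0$. It must be checked against the Regts--Sevenster sign conventions from Section~\ref{subsec:mpf}. In the simplest version, $h'$ is the pullback of $h$ along the projection killing the new variables. Then only colourings avoiding new colours survive, and $p_{h'}=p_h$ directly, which suffices.
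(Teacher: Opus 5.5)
Your proof is correct. The circle value and the dimension pairs follow the paper: $f(\freecircle)=\tr(\mathrm{id}_{\genX})=\str(\mathrm{id}_{V})=k-2\ell$; then \cite[Theorem~6]{RS21} with the circle convention for necessity, and padding by zero for sufficiency.

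You take a different route for the lower bound $\liminf_{n}(\rk\connmat{f}{2n})^{1/2n}\ge k+2\ell$. The paper (Lemma~\ref{lemm:diminequality}) applies Cauchy--Schwarz to the $S_{n}$-isotypic decomposition $V^{\otimes n}\cong\bigoplus_{\lambda}U_{\lambda}\otimes\schur{\lambda}{V}$. It bounds $\sum d_{\lambda}^{2}$ over the occurring $\lambda$ by $\rk\connmat{f}{2n}$ through block faithfulness (Lemma~\ref{lemm:blockimport}), and bounds $\sum_{\lambda}(\dim\schur{\lambda}{V})^{2}=\dim\End_{S_{n}}(V^{\otimes n})$ by an orbit count. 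You instead apply Cauchy--Schwarz to the decomposition of $\genX^{\otimes n}$ into simples of $\cauchy$. There $\sum_{L}m_{L}^{2}$ is exactly $\rk\connmat{f}{2n}$, and all the work moves into bounding $\sum_{L}(\dim\fib(L))^{2}$.

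Your hook-count justification of that bound is only sketched, but it closes cleanly and lands on the paper's estimate. Under $\fib$, the algebra $\End(\genX^{\otimes n})\cong\bigoplus_{L}\mathrm{Mat}_{m_{L}}(\mathbb{C})$ acts on $V^{\otimes n}\cong\bigoplus_{L}\mathbb{C}^{m_{L}}\otimes\fib(L)$ through the first tensor factors. So $\sum_{L}(\dim\fib(L))^{2}$ is the dimension of the commutant of its image among all linear maps. That commutant lies inside $\End_{S_{n}}(V^{\otimes n})$, because the image contains $\Phi^{V}_{n}(\mathbb{C}[S_{n}])$. Hence $\sum_{L}(\dim\fib(L))^{2}\le\binom{n+d^{2}-1}{d^{2}-1}$. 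Neither the $\mathrm{Rep}(G,z)$ picture nor the trace-zeta fallback is needed.

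Your intermediate inequality is therefore at least as sharp as the paper's. The cost is that it consumes the semisimple structure of $\cauchy$ and the additivity of $\fib$. The paper's version needs only the linearity of $\fib$ and block faithfulness, which hold before any semisimplicity is known.

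Two small points remain:
\begin{itemize}
\item For $d=0$ your quotient divides by zero. There, however, the upper bound alone gives $\growthbase=0$ and the limit $0$.
\item In the padding, the \emph{projection killing the new variables} works only if the new odd colours are closed under the partner map $\xi_{i}\mapsto\eta_{i}$. In the coordinates of Definition~\ref{defi:mpf} this means sending the old $\xi_{m+\ell}$ to $\xi_{m+\ell+r}$ rather than keeping its index. This is exactly the bookkeeping done in Lemma~\ref{lemm:padding}.
\end{itemize}
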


Thus the least total number of colours of a model of $f$ is the
growth base, the two dimensions of a minimal model are read off
from $\growthbase$ and the free-circle value, and the attainable
totals are the numbers $\growthbase+4r$. The minimal dimension
pair is intrinsic to $f$; a minimal functional is not, even up to
changes of coordinates preserving the form
(Section~\ref{subsec:growthbase}). The same two invariants decide
which spaces carry a model at all: for a graph parameter with
$f(\emptygraph)=1$ and for $k',\ell'\in\mathbb{N}$,
Theorem~\ref{theo:prescribed} shows that $f$ has a model on
$\mathbb{C}^{k'|2\ell'}$ if and only if
\[
f(\freecircle)=k'-2\ell'
\qquad\text{and}\qquad
\rk\connmat{f}{t}\le(k'+2\ell')^{t}\quad(t\ge0),
\]
with $0^{0}=1$. Every model has superdimension $f(\freecircle)$,
while the least total dimension of a model is $\growthbase$; thus
the free-circle value and the growth of the connection ranks
recover the odd and even dimensions of a minimal model. For the
characteristic polynomial $G\mapsto\det(\theta I-A_{G})$, taken
on the multigraphs of this paper with the value $0$ on free
circles, these two invariants are $0$ and $4$, and the
Regts--Sevenster model on $\mathbb{C}^{2|2}$ is minimal for every
$\theta$ (Corollary~\ref{coro:charpoly}).

The proof runs as follows, and the sections follow it literally,
after Section~\ref{sec:prelim} has fixed conventions and recalled
the Regts--Sevenster definition of a mixed partition function
(Definition~\ref{defi:mpf}). From $f$ we construct a
\emph{connection category} $\conncat$: fragments modulo the
kernel of the connection pairing, organised into a rigid
symmetric monoidal category whose morphism spaces have the
connection ranks as their dimensions and whose trace pairings
are nondegenerate (Section~\ref{sec:conncat}). In its Cauchy
completion $\cauchy$, an argument of Schrijver
\cite[Prop.~4]{Sch15} shows that every nilpotent endomorphism has
trace zero: a nilpotent of nonzero trace would make the
endomorphism spaces of the tensor powers grow factorially, which
the rank bound forbids. Combined with the nondegeneracy of the
trace pairing this makes $\cauchy$ semisimple abelian, Deligne's
theorem \cite{Del02} provides an exact faithful symmetric tensor
functor to finite-dimensional super vector spaces, and
transporting the symmetric-group actions along that functor
yields the dimension bound (Section~\ref{sec:semisimple}).

The remainder of the proof is the concrete half. The fibre
functor yields a form, a copairing and one vertex tensor per
degree, and $f$ becomes the value of a super tensor network
(Section~\ref{sec:extraction}). A separate reconstruction theorem
then identifies that network, after a parity normalisation, with
the Regts--Sevenster sum exactly --- Eulerian subgraphs,
orientations, pairings, and the sign of $-1$ for every fermionic
circuit (Section~\ref{sec:circuit}). Section~\ref{sec:consequences}
draws the consequences: the dimensions of all models of $f$ are
described, the model the proof produces being the smallest, the
ordinary and skew models are located inside the mixed ones, with
Schrijver's characterisation of the former recovered exactly, and
Corollary~\ref{coro:rectangle} is compared with a conjecture from
Sevenster's thesis on the models of a fixed $\mathbb{C}^{k|2\ell}$
(Section~\ref{subsec:sevenster}); Section~\ref{subsec:formal}
records what has been formalised.
Appendix~\ref{app:zeta} strengthens the nilpotent-trace step to
the rationality of trace zeta functions. The connection-category and reconstruction
arguments are developed self-containedly; beyond them the proof
uses Deligne's theorem and the standard representation theory of
symmetric groups, and Section~\ref{sec:consequences} uses the hook
criterion of Berele and Regev.

The obstruction met by Regts and Sevenster is the failure of
complete reducibility, in general, for the finite-dimensional
representations of $\mathfrak{osp}$ (``not reductive'' in their
words), which blocks the expected argument through orthosymplectic
invariant theory; the companion manuscript in which that route was
to be completed has, to our knowledge, not appeared. The
categorical route never touches that invariant theory:
semisimplicity is proved for the category obtained by quotienting
the fragments by every relation invisible to graph closures,
rather than assumed for an ambient category of orthosymplectic
representations, where it fails in general. The quotient makes
the trace pairings nondegenerate, and the growth hypothesis turns
that nondegeneracy into semisimplicity.

The contribution is thus an integrated argument in three parts: a
categorical route around the obstruction, through the connection
quotient, Schrijver's argument and Deligne's theorem; an exact
comparison of rank growth with dimension, in which Deligne's
functor places the symmetric-group blocks acting on the model
inside the connection category, where the rank hypothesis bounds
them (Lemma~\ref{lemm:diminequality}); and the sign-sensitive
identification of the abstract fibre functor's output with the
exact normalisations of \cite[Definition~5]{RS21}, the
reconstruction theorem of Section~\ref{sec:circuit}. The
semisimplicity step is Schrijver's argument
\cite[Propositions~4 and~5]{Sch15}; Etingof and Penneys have since
proved the vanishing of nilpotent traces in much greater
generality \cite{EP26}. Appendix~\ref{app:zeta} gives a direct
objectwise proof of the rationality of trace zeta functions, with
rectangular degree bounds, together with an explicit sufficient
criterion from endomorphism growth;
Section~\ref{subsec:appliterature} explains the antecedents. The
converse Holant
theorems of Young \cite{You25} and of Cai and Young \cite{CY26}
(see also \cite{CG22}) concern the equivalence of signature sets
under a transformation group, a different question.

The proof
is existential: it provides no procedure that reconstructs the
functional $h$ from the values of $f$, Deligne's functor being
non-constructive; the two dimensions of the model it produces are,
by contrast, determined by $f$ alone
(Theorem~\ref{theo:modeldims}).

Both implications of the Main Theorem have been machine-checked in
Lean~4 over mathlib \cite{Whi26}; Section~\ref{subsec:formal}
records what the development covers and how its conventions relate
to those of the paper.

\section[Mixed partition functions, fragments, and half-edge
conventions]{Mixed partition functions, fragments, and half-edge
\mbox{conventions}}\label{sec:prelim}

The purpose of this section is twofold: to fix our conventions for
multigraphs, fragments and gluing, and to state the Regts--Sevenster
definition of a mixed partition function together with the hypothesis
class of the Main Theorem.

\subsection{Graphs, half-edges, and parameters}\label{subsec:graphs}

A \emph{graph} is a finite multigraph: loops, parallel edges, and
\emph{free circles} --- closed edges incident with no vertex,
isomorphic to the circle $\freecircle$ --- are all permitted. We write
$\emptygraph$ for the empty graph and $G\sqcup H$ for disjoint union.
A \emph{graph parameter} is a function from isomorphism classes of
graphs to $\mathbb{C}$. We write $\mathbb{N}=\mathbb{Z}_{\ge0}$.

Every edge other than a free circle consists of two \emph{half-edges},
one at each endpoint; the two half-edges of a loop are distinct and
lie at the same vertex. The degree $\deg(v)$ counts half-edges, so
that a loop contributes two, and for $F\subseteq E$ we write
$\halfedges{F}{v}$ for the set of half-edges at $v$ belonging to edges
of $F$. Throughout this paper, orders, orientations and pairings of edges
are carried by the half-edges, while colours are assigned to edges
and inherited by both half-edges; this convention matters at
loops and parallel edges, and we shall rely on it without further
comment. Formally, a graph is a finite vertex set, a finite
half-edge set, an incidence map, a perfect matching of half-edges
into edges, and a nonnegative integer recording the free-circle
components, which carry no half-edges.

\subsection{Fragments and gluing}\label{subsec:fragments}

Intuitively, a fragment is a graph with some dangling edge-ends, which
may later be glued to the dangling ends of another fragment.
Formally, a \emph{$t$-fragment} is a graph together with exactly $t$
distinguished vertices of degree one, labelled $1,\dotsc,t$; the edge
at the $i$-th labelled vertex is the \emph{open end} $i$. We write
$\Frag{t}$ for the set of isomorphism classes of $t$-fragments, so
that $\Frag{0}$ is the set of graphs. To \emph{glue} two open ends,
delete the two labelled vertices and unify their edges into a single
edge; for $F,G\in\Frag{t}$, the \emph{full gluing}
$F\glue G\in\Frag{0}$ glues the $i$-th open end of $F$ to the $i$-th
open end of $G$ for every $i$. Note that gluing to each other the
two open ends of the $2$-fragment consisting of a single edge
joining its two labelled vertices produces a free circle, as does
the full gluing of two copies of that fragment; this is
the reason free circles are admitted from the outset. We show two
examples, including this degenerate case, in
Figure~\ref{fig:gluing}.

\begin{figure}[htbp]
\centering
\begin{tikzpicture}[scale=0.85,
  vert/.style={circle,fill,inner sep=1.6pt},
  lab/.style={circle,draw,inner sep=1.2pt,font=\scriptsize}]
  \begin{scope}
    \node[vert] (a) at (0,0.6) {};
    \node[vert] (b) at (0,-0.6) {};
    \draw (a) -- (b);
    \node[lab] (l1) at (1.1,0.6) {1};
    \node[lab] (l2) at (1.1,-0.6) {2};
    \draw (a) -- (l1); \draw (b) -- (l2);
    \node[lab] (m1) at (2.1,0.6) {1};
    \node[lab] (m2) at (2.1,-0.6) {2};
    \node[vert] (c) at (3.2,0) {};
    \draw (m1) -- (c); \draw (m2) -- (c);
    \node at (1.6,-1.4) {$F$\hspace{2.2em}$G$};
    \node at (4.0,0) {$\leadsto$};
    \node[vert] (a2) at (4.9,0.6) {};
    \node[vert] (b2) at (4.9,-0.6) {};
    \node[vert] (c2) at (6.1,0) {};
    \draw (a2) -- (b2); \draw (a2) -- (c2); \draw (b2) -- (c2);
    \node at (5.5,-1.4) {$F\glue G$};
  \end{scope}
  \begin{scope}[xshift=8.6cm]
    \node[lab] (p1) at (0,0.6) {1};
    \node[lab] (p2) at (0,-0.6) {2};
    \draw (p1) to[bend right=40] (p2);
    \node[lab] (q1) at (1.0,0.6) {1};
    \node[lab] (q2) at (1.0,-0.6) {2};
    \draw (q1) to[bend left=40] (q2);
    \node at (0.5,-1.4) {$\strand\qquad\strand$};
    \node at (1.9,0) {$\leadsto$};
    \draw (3.0,0) circle (0.55);
    \node at (3.0,-1.4) {$\freecircle$};
  \end{scope}
\end{tikzpicture}
\caption{Gluing two fragments; the free-circle-creating case.}
\label{fig:gluing}
\end{figure}

Two further operations on fragments, composition and the identity
strand, are introduced at the start of Section~\ref{sec:conncat},
where their elementary properties are established.

\subsection{Connection matrices and the hypothesis
class}\label{subsec:connmat}

The \emph{edge-connection matrix} $\connmat{f}{t}$ of a graph
parameter $f$ is the $\Frag{t}\times\Frag{t}$ matrix with entries
\[
\connmat{f}{t}(F,G)\coloneqq f(F\glue G),
\]
and its rank $\rk\connmat{f}{t}$ is the supremum of the ranks of its
finite submatrices. The hypotheses of the Main Theorem are:
\begin{itemize}
\item[(H1)] $f(\emptygraph)=1$;
\item[(H2)] there is a real number $R\ge 1$ with
  $\rk\connmat{f}{t}\le R^{t}$ for all $t\ge 0$.
\end{itemize}

\subsection{Mixed partition functions}\label{subsec:mpf}

The definition of a mixed partition function recalled in this
subsection is Definition~5 of \cite{RS21}, restated in the
half-edge conventions of Section~\ref{subsec:graphs} as
Definition~\ref{defi:mpf} below.

Fix $k,\ell\in\mathbb{N}$ and let $V=V_{0}\oplus V_{1}$ be a super
vector space with $\dim V_{0}=k$ and $\dim V_{1}=2\ell$, with
distinguished bases $e_{1},\dotsc,e_{k}$ of $V_{0}$ and
$\xi_{1},\dotsc,\xi_{2\ell}$ of $V_{1}$. To avoid confusion we write
$\xi_{i},\eta_{i}$ where \cite{RS21} write $f_{i},g_{i}$, reserving
the letter $f$ for graph parameters throughout this paper; here
\[
\eta_{i}\coloneqq
\begin{cases}
-\xi_{i+\ell}& i\le \ell,\\
\hphantom{-}\xi_{i-\ell}& i>\ell,
\end{cases}
\]
so that each $\eta_{i}$ is a signed basis vector. Let
$h\in(\SymV V_{0}\otimes\Lambda V_{1})^{*}$. Regts and Sevenster
permit an arbitrary such functional; since the exterior degree
appearing at a vertex in the sum below is $\deg_{F}(v)$, which is
even whenever $F$ is Eulerian, the odd exterior part of $h$ never
contributes, and we may therefore impose, without loss of
generality, that $h$ vanishes on every component of odd exterior
degree.

Call $F\subseteq E(G)$ \emph{Eulerian} if $\deg_{F}(v)$ is even for
every vertex $v$. For Eulerian $F$, choose an Eulerian orientation
of $F$ together with a compatible \emph{local pairing}
$\kappa=(\kappa_{v})_{v}$, which pairs at each vertex $v$ each
incoming half-edge of $\halfedges{F}{v}$ with an outgoing one; we
list each pair as $(a_{1},a_{2})$ with $a_{1}$ incoming. Following
the pairs from edge to edge decomposes $F$ into directed
\emph{$\kappa$-circuits}, and we write $c(\kappa)$ for their
number.

\begin{defi}[{Mixed partition function; Regts--Sevenster
\cite[Definition~5]{RS21}}]\label{defi:mpf}
With $k$, $\ell$ and $h$ as above, the \emph{mixed partition
function} of $h$ is the graph parameter
\begin{multline}\label{eq:mpf}
p_{h}(G)\coloneqq
\sum_{\substack{F\subseteq E(G)\\ F\text{ Eulerian}}}
(-1)^{c(\kappa)}
\sum_{\psi\colon E\setminus F\to[k]}\;
\sum_{\varphi\colon F\to[2\ell]}\\
\prod_{v\in V(G)}
h\Bigl(
\bigodot_{a\in\halfedges{E\setminus F}{v}}\hspace{-1ex}e_{\psi(a)}
\;\otimes
\bigwedge_{(a_{1},a_{2})\in\kappa_{v}}\hspace{-1ex}
\xi_{\varphi(a_{1})}\wedge\eta_{\varphi(a_{2})}
\Bigr),
\end{multline}
for a graph $G$ without free circles, extended to all graphs by
$p_{h}(\freecircle)\coloneqq k-2\ell$ and multiplicativity over
disjoint union. A graph parameter is a \emph{mixed partition
function} if it equals $p_{h}$ for some $k$, $\ell$ and $h$.
\end{defi}

Here $\psi$ and $\varphi$ colour edges, and each
half-edge inherits the colour of its edge. The wedge over the
pairs of $\kappa_{v}$ is unambiguous: each factor
$\xi_{\varphi(a_{1})}\wedge\eta_{\varphi(a_{2})}$ has even
exterior degree, so the order of the pairs is immaterial and the
displayed argument of $h$ is a well-defined element of
$\SymV V_{0}\otimes\Lambda V_{1}$. The value is independent
of the choice of orientation and compatible local pairing:
\cite[Proposition~3]{RS21} proves this for the skew sector, and
the mixed sum reduces to it, one colouring $\psi$ at a time, in
the discussion preceding their Definition~5.

The value on the free circle is Regts and Sevenster's: they admit
the circle, a single edge with an empty vertex set, as a graph,
and their formula evaluates it to $k-2\ell$
\cite[remark following Definition~5]{RS21}; we separate the
circle-free case from the general one because our half-edge
formalism counts free circles separately. Every model of $f$
therefore has superdimension $f(\freecircle)$, which is one of the
two quantities through which Theorem~\ref{theo:modeldims} reads
the dimensions of a minimal model off $f$.

Regts and Sevenster proved that
$\rk\connmat{p_{h}}{t}\le(k+2\ell)^{t}$ for every $t\ge 0$
\cite[Theorem~6]{RS21}, with $0^{0}=1$ in the degenerate case
$k=2\ell=0$; thus every mixed partition function satisfies (H2)
with $R=\max(1,k+2\ell)$ --- the maximum matters only for the
degenerate model --- and (H1) is immediate, the empty graph
carrying the empty product. The Main Theorem is the converse, with
the same constant.

The definition is somewhat intricate, and so we now give an
explicit example, to which we shall return. Take $k=2$ and
$\ell=1$, fix $\theta\in\mathbb{C}$, and let
$h=h(\theta)$ be determined by
\[
h(e_{1}^{\odot i})=\theta,\qquad
h(e_{1}^{\odot i}\odot e_{2})=\sqrt{-1},\qquad
h(e_{1}^{\odot i}\otimes \xi_{1}\wedge\eta_{1})=1
\qquad(i\ge 0),
\]
and by $h=0$ on all basis elements outside the span of these. Regts
and Sevenster showed \cite[Proposition~10]{RS21} that, for graphs
without free-circle components, $p_{h(\theta)}$ is the
characteristic polynomial $G\mapsto\det(\theta I-A_{G})$, where
the adjacency matrix $A_{G}$ counts a loop twice on the diagonal;
since $k-2\ell=0$ here, $p_{h(\theta)}$ vanishes on every graph
that does contain a free circle.

We evaluate $p_{h(\theta)}$ on the graph $L$ with one vertex and one
loop; see Figure~\ref{fig:definitionfive}. The loop has two
half-edges at the single vertex $v$, so $\deg(v)=2$, and both
$F=\emptyset$ and $F=\{\text{loop}\}$ are Eulerian. For
$F=\emptyset$, the vertex factor is
$h(e_{\psi}\odot e_{\psi})$: the colouring $\psi=1$ contributes
$h(e_{1}^{\odot 2})=\theta$, while $\psi=2$ contributes
$h(e_{2}^{\odot 2})=0$. For $F=\{\text{loop}\}$, the pairing
$\kappa_{v}$ joins the loop's two half-edges into a single
$\kappa$-circuit, so $c(\kappa)=1$; the
colouring $\varphi=j$ contributes $h(\xi_{j}\wedge\eta_{j})$, and
since
\[
\xi_{1}\wedge\eta_{1}
=-\xi_{1}\wedge\xi_{2}
=\xi_{2}\wedge\eta_{2},
\]
both odd colours contribute through the same basis vector of
$\Lambda^{2}V_{1}$, each with value $1$. In total
\[
p_{h(\theta)}(L)
=\theta+(-1)^{1}\cdot(1+1)
=\theta-2
=\det\bigl(\theta I-A_{L}\bigr),
\]
as required, since $A_{L}=(2)$. This small computation has already
exercised the two incidences of a loop, the Eulerian condition,
the circuit sign, the distinction between a loop and a free
circle, and the fact that the $\eta$-convention makes distinct odd
colourings contribute through a common basis vector; all five
return in Sections~\ref{sec:extraction} and~\ref{sec:circuit}.

\begin{figure}
\centering
\begin{tikzpicture}[scale=0.9,
  vert/.style={circle,fill,inner sep=1.8pt}]
  \begin{scope}
    \node[vert] (v) at (0,0) {};
    \draw (v) .. controls (-1.1,1.2) and (1.1,1.2) .. (v);
    \node[font=\scriptsize] at (-0.85,0.45) {$e_{\psi}$};
    \node[font=\scriptsize] at (0.85,0.45) {$e_{\psi}$};
    \node at (0,-0.7) {$F=\emptyset$};
    \node[font=\small] at (0,-1.3) {$h(e_{\psi}\odot e_{\psi})$};
  \end{scope}
  \begin{scope}[xshift=5cm]
    \node[vert] (w) at (0,0) {};
    \draw[postaction={decorate},decoration={markings,
      mark=at position 0.2 with {\arrow{<}}}]
      (w) .. controls (-1.1,1.2) and (1.1,1.2) .. (w);
    \node[font=\scriptsize] at (-0.85,0.45) {$\xi_{\varphi}$};
    \node[font=\scriptsize] at (0.85,0.45) {$\eta_{\varphi}$};
    \draw[dotted,thick] (-0.29,0.24) to[bend right=55] (0.29,0.24);
    \node at (0,-0.7) {$F=\{\text{loop}\}$};
    \node[font=\small] at (0,-1.3)
      {$(-1)^{1}\,h(\xi_{\varphi}\wedge\eta_{\varphi})$};
  \end{scope}
\end{tikzpicture}
\caption{The loop graph $L$, its two Eulerian subsets, and the
pairing $\kappa$.}
\label{fig:definitionfive}
\end{figure}

\section{The connection category and its Cauchy
completion}\label{sec:conncat}

We now organise fragments and gluing into a symmetric monoidal
category $\conncat$, whose morphism spaces have the connection ranks
as their dimensions, and complete it to a category $\cauchy$ in which
the remainder of the proof takes place. The category theory involved is elementary and self-contained:
every trace identity below is proved by exhibiting an equality of
glued graphs, and $f$ is applied only afterwards. For readers of a
combinatorial bent, the following dictionary may serve as a guide.

\begin{center}
\begin{tabular}{ll}
number of open ends & object\\
fragment & morphism\\
gluing & composition\\
disjoint union & tensor product\\
full closure & trace\\
connection pairing & trace pairing
\end{tabular}
\end{center}

We call $\conncat$ the \emph{connection category} of $f$.

\subsection{Composition of fragments}\label{subsec:composition}

Fragments compose by partial gluing, and we fix the categorical
order once and for all: a fragment $F\in\Frag{s+t}$ is read as a
morphism from its first $s$ open ends (its \emph{inputs}) to its
last $t$ (its \emph{outputs}), and for $F\in\Frag{s+t}$ and
$G\in\Frag{t+u}$ the \emph{composite} $G\circ F\in\Frag{s+u}$ ---
first $F$, then $G$, written right to left --- glues the last $t$
open ends of $F$ to the first $t$ open ends of $G$, in order. The
inputs of $G\circ F$ are the inputs of $F$ with their labels, and
its outputs are the outputs of $G$, relabelled
$s+1,\dotsc,s+u$. The \emph{strand}
$\strand\in\Frag{2}$ is the fragment with a single edge joining its
two labelled vertices, and more generally, for a permutation $\pi$ of
$[t]$, the \emph{permutation fragment} $P_{\pi}\in\Frag{t+t}$
consists of $t$ disjoint strands joining open end $i$ to open end
$t+\pi(i)$. Whenever a disjoint union of fragments is read as a
morphism, it carries the input--input--output--output block
relabelling made explicit in Section~\ref{subsec:categorydef}.

\begin{lemm}\label{lemm:gluingfacts}
The following hold for all fragments of compatible arities.
\begin{enumerate}
\item[(i)] Composition is associative:
  $(H\circ G)\circ F=H\circ(G\circ F)$.
\item[(ii)] The strand is an identity: writing $\strand^{t}$ for
  $t$ disjoint strands, $F\circ\strand^{s}=F=\strand^{u}\circ F$ for
  $F\in\Frag{s+u}$.
\item[(iii)] Composition and disjoint union interchange: for
  $F\in\Frag{s+t}$, $G\in\Frag{t+u}$, $F'\in\Frag{s'+t'}$ and
  $G'\in\Frag{t'+u'}$,
  $(G\sqcup G')\circ(F\sqcup F')=(G\circ F)\sqcup(G'\circ F')$.
\item[(iv)] Gluing commutes with simultaneous relabelling of the
  interface, and $P_{\pi}\circ P_{\sigma}=P_{\pi\sigma}$, where
  $\pi\sigma$ denotes the composite function ($\sigma$ first).
\end{enumerate}
\end{lemm}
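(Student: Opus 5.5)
All four statements are combinatorial identities between isomorphism classes of fragments. The plan is to prove each one by exhibiting an explicit bijection between the underlying data of the two sides: vertices, half-edges, incidence maps, edge matchings, free-circle counts and labels. We use the formal description of a graph from Section~\ref{subsec:graphs}. The one useful observation is that gluing open end $i$ of one fragment to open end $j$ of another is a local operation on these data. It deletes the two labelled vertices and their two half-edges, and it re-matches the two partner half-edges with each other. If both partners are themselves the deleted half-edges of labelled vertices, so that a whole edge disappears into a closed loop of strands, the operation instead increments the free-circle count. We therefore model every gluing as follows: take the disjoint union, then apply an operation that depends only on a set of pairs of labelled vertices. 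This operation is well defined on isomorphism classes, because any isomorphism preserving labels commutes with it.

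\emph{Step 1: a normal form for iterated gluing.} The key lemma is that gluing a set $S$ of disjoint pairs of labelled vertices in a fragment does not depend on the order in which the pairs are processed. Fix a set of pairs; the half-edges adjacent to the glued vertices then form chains. Each chain alternates between genuine edges and gluing pairs. A chain that ends in two surviving half-edges becomes one edge. A chain that closes up, using only strands that run between glued labelled vertices, becomes one free circle. Processing the pairs one at a time produces exactly these chains, and the count of closed chains is additive. Hence the result is order-independent and equals this normal form.

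This is the step I expect to be the main obstacle. It is the only place where the free-circle bookkeeping is delicate; the degenerate strand-to-itself case of Figure~\ref{fig:gluing} must be handled here. I would treat it by an induction on $|S|$, checking the two cases for the chain through a newly added pair: it either merges two open chains or closes one.

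\emph{Step 2: deriving (i)--(iv).} For (i), both $(H\circ G)\circ F$ and $H\circ(G\circ F)$ equal the normal form of $F\sqcup G\sqcup H$ glued along the union of the two interface pairings, with the same surviving labels. For (iii), the pairs used on the left split into the $F$--$G$ interface and the $F'$--$G'$ interface, and no chain crosses between the two components. So the normal form is the disjoint union of the two separate normal forms, once we use the block relabelling convention of Section~\ref{subsec:categorydef}. For (ii), gluing a strand to an open end replaces the edge at that end by a chain of two edges, which becomes a single edge carrying the strand's outer label. So the fragment is unchanged up to relabelling, and the labels agree with $F$. The first part of (iv) holds because the normal form refers to the pairs only, not to their labels. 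For $P_\pi\circ P_\sigma$, end $i$ is joined through $t+\sigma(i)$ to the strand ending at $t+\pi(\sigma(i))$. Step 1 turns each such chain into one strand, and these strands give exactly $P_{\pi\sigma}$.
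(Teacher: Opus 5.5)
Your proposal is correct and takes essentially the same route as the paper. Both reduce all four statements to one simultaneous-gluing normal form, in which matched ends are unified, chains collapse to single edges and closed chains to free circles; from it, (i) follows from the union of the interface pairings, (iii) from the splitting of that pairing, (ii) from strand unification, and (iv) from relabelling invariance and concatenation of strands. The only difference is that you prove order-independence of iterated gluing by an explicit induction on the number of pairs, which the paper simply asserts.
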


\begin{proof}
All four statements are instances of one description of iterated
gluing. A gluing is the quotient of the edge multiset of a disjoint
union along a pairing of open ends: each matched pair of pendant
edges is unified into one edge, chains of matched strands collapse
into single edges, and closed chains collapse into free circles. In
an iterated composite each open end participates in at most one
interface pair, so the union of the interface pairings is itself a
single pairing on ends, and unifying iteratively in any order equals
unifying simultaneously along that union; this proves (i). For (ii),
gluing a strand's end to an open end $o$ of $F$ deletes the two
labelled vertices and unifies the strand's edge with the edge of
$o$, yielding a single pendant edge ending at the strand's other
labelled vertex, which is $F$ with that open end relabelled. For
(iii), the interface pairing of the left-hand side splits as the
disjoint union of the blockwise pairings, and quotienting a disjoint
union along a split pairing is the disjoint union of the quotients.
Finally (iv): a relabelling is an isomorphism of the data the
pairing is defined on, and composing permutation fragments
concatenates strands, which by (ii) collapse to the strands of the
composite permutation.
\end{proof}

\subsection{The connection category}\label{subsec:categorydef}

For the remainder of the paper, $f$ denotes a graph parameter
satisfying (H1) and (H2) with rank bound $R$. We begin by recording
why (H2) is imposed at $t=0$ as well: that case is where the
multiplicativity of $f$ over disjoint unions, which the Main
Theorem does not assume, comes from. For a parameter normalised at
the empty graph, rank at most one of the connection matrix of arity
zero is equivalent to multiplicativity; this is standard
\cite[Exercise~4.4]{Lov12}, and the rank hypothesis at arity zero
plays the same role in the characterisation of Freedman, Lov\'asz
and Schrijver \cite{FLS07}, as Lov\'asz remarks
\cite[remark following Theorem~5.54]{Lov12}. Without the case
$t=0$ the Main Theorem would fail: for
$f(G)\coloneqq(1+2^{v(G)})/2$, with $v(G)$
the number of vertices of $G$, two $t$-fragments $F$ and $G$ with
$u(F)$ and $u(G)$ unlabelled vertices have
$f(F\glue G)=\tfrac12+\tfrac12\,2^{u(F)}2^{u(G)}$, so every
connection matrix has rank at most two and
$\rk\connmat{f}{t}\le2^{t}$ for all $t\ge1$, while
$f(\emptygraph)=1$ and
$f(K_{1}\sqcup K_{1})=\tfrac52\ne\tfrac94=f(K_{1})^{2}$.

\begin{lemm}\label{lemm:mult}
For all graphs $G$ and $H$ we have $f(G\sqcup H)=f(G)f(H)$.
\end{lemm}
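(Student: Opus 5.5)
We use only (H1) and the case $t=0$ of (H2), which gives $\rk\connmat{f}{0}\le R^{0}=1$. For $t=0$ a fragment is just a graph, and the full gluing of two $0$-fragments is their disjoint union: there are no open ends to glue, so $G\glue H=G\sqcup H$. Hence $\connmat{f}{0}$ is the $\Frag{0}\times\Frag{0}$ matrix with entries $f(G\sqcup H)$, and every finite submatrix of it has rank at most one.

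Fix graphs $G$ and $H$ and take the $2\times2$ submatrix indexed by the rows and columns $\emptygraph, G$ on one side and $\emptygraph, H$ on the other:
\[
\begin{pmatrix}
f(\emptygraph\sqcup\emptygraph) & f(\emptygraph\sqcup H)\\
f(G\sqcup\emptygraph) & f(G\sqcup H)
\end{pmatrix}
=
\begin{pmatrix}
1 & f(H)\\
f(G) & f(G\sqcup H)
\end{pmatrix}.
\]
Here we use that $\emptygraph$ is a unit for disjoint union up to isomorphism, and (H1). Since the rank is at most one, the determinant vanishes, so $f(G\sqcup H)-f(G)f(H)=0$, which is the claim.

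If $G=\emptygraph$ or $H=\emptygraph$, or if $G=H$, the two chosen rows or columns coincide and the submatrix degenerates. These cases are handled separately: the first is trivial by (H1). When $G=H\ne\emptygraph$, the rows are still indexed by the distinct classes $\emptygraph$ and $G$, so the submatrix is a genuine $2\times 2$ submatrix and the argument goes through unchanged. There is no real obstacle here. The only points to check are that free circles cause no trouble, since disjoint union treats them as components like any other, and that the rank of an infinite matrix is defined through its finite submatrices, exactly as in Section~\ref{subsec:connmat}.
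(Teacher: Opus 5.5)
Your proof is correct and is essentially the paper's. The paper notes that the row of $\emptygraph$ in the rank-at-most-one matrix $\connmat{f}{0}$ is nonzero, so every row is a multiple of it; that is the same fact as the vanishing of your $2\times 2$ minor on rows $\emptygraph,G$ and columns $\emptygraph,H$. Your final paragraph's case split is unnecessary but harmless: rows and columns are chosen independently, so $G=H$ is not degenerate, and the cases $G=\emptygraph$ or $H=\emptygraph$ follow immediately from (H1).
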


\begin{proof}
At $t=0$, hypothesis (H2) reads $\rk\connmat{f}{0}\le R^{0}=1$, and
$\connmat{f}{0}(G,H)=f(G\sqcup H)$. The row of $\emptygraph$ is
$(f(H))_{H}$, which is nonzero since $f(\emptygraph)=1$ by (H1).
A matrix of rank at most one with a nonzero row has every row a
scalar multiple of it: $f(G\sqcup H)=c_{G}f(H)$ for some
$c_{G}\in\mathbb{C}$, and setting $H=\emptygraph$ gives
$c_{G}=f(G)$.
\end{proof}

Let $\mathbb{C}^{(\Frag{t})}$ denote the vector space of finite
formal $\mathbb{C}$-combinations of $t$-fragments, and let
\[
\Theta_{t}\colon\mathbb{C}^{(\Frag{t})}\to
\bigl(\mathbb{C}^{(\Frag{t})}\bigr)^{*},
\qquad
\Theta_{t}(F)\coloneqq\bigl(G\mapsto f(F\glue G)\bigr),
\]
extended linearly, with kernel
$N_{t}\coloneqq\ker\Theta_{t}$. The quotient
$\mathbb{C}^{(\Frag{t})}/N_{t}$ has dimension exactly
$\rk\connmat{f}{t}$: it is isomorphic to the image of $\Theta_{t}$,
i.e.\ to the row space of the connection matrix, and any finite
linearly independent family of rows remains independent after
restriction to a suitable finite set of columns, so the row-space
dimension equals the supremum of the finite-submatrix ranks. By
(H2) this dimension is at most $R^{t}$; this is how the rank
hypothesis will enter every dimension count below.

\begin{lemm}\label{lemm:ideal}
The kernels $N$ form an ideal for both operations, in the following
sense.
\begin{enumerate}
\item[(a)] If $x\in N_{s+t}$ and $w$, $y$ are fragments of
  compatible arities, then $w\circ x\circ y\in N$.
\item[(b)] If $x\in N_{s+t}$ and $z\in\Frag{s'+t'}$, then
  $x\sqcup z\in N_{(s+s')+(t+t')}$.
\end{enumerate}
\end{lemm}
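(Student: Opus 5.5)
The plan is to reduce both statements to a single principle: every full closure of a composite or disjoint union containing $x$ is itself a full closure of $x$ by some other fragment. Since $x\in N_{s+t}$ means that $f(x\glue G)=0$ for every $G\in\Frag{s+t}$, extended linearly, this principle finishes both parts at once. We work with a single fragment $x$ and extend by linearity; linearity is harmless because gluing with a fixed fragment is extended linearly in each argument. Every step is an equality of glued graphs, justified by Lemma~\ref{lemm:gluingfacts}, and $f$ is applied only at the end.

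For (a), fix fragments $w,y$ of compatible arities, say $y\in\Frag{r+s}$, $x\in\Frag{s+t}$ and $w\in\Frag{t+u}$. Take an arbitrary test fragment $G\in\Frag{r+u}$. The closure $(w\circ x\circ y)\glue G$ is the graph obtained from the disjoint union $y\sqcup x\sqcup w\sqcup G$ by unifying open ends along one pairing. As in the proof of Lemma~\ref{lemm:gluingfacts}(i), unification may be carried out in any order. We first glue $y$, $w$ and $G$ together along all interfaces not involving $x$. This produces a fragment $G'\in\Frag{s+t}$ whose open ends are exactly the $s+t$ ends that were to meet $x$, labelled so that end $i$ of $G'$ meets end $i$ of $x$. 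Then $(w\circ x\circ y)\glue G=x\glue G'$, so
\[
f\bigl((w\circ x\circ y)\glue G\bigr)=f(x\glue G')=0.
\]
The only bookkeeping is the relabelling of $G'$, which is covered by Lemma~\ref{lemm:gluingfacts}(iv).

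For (b), take $G\in\Frag{(s+s')+(t+t')}$ and undo the block relabelling of $x\sqcup z$. The closure $(x\sqcup z)\glue G$ is obtained by gluing all ends of $z$ to the corresponding ends of $G$, and then the ends of $x$ to the remaining ends of $G$. Gluing $z$ into $G$ first gives a fragment $G''\in\Frag{s+t}$ with $(x\sqcup z)\glue G=x\glue G''$. Hence this closure also has $f$-value zero, and $x\sqcup z\in N$.

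The only real obstacle is the careful statement of the relabelling conventions, in particular the input--input--output--output block convention for disjoint unions. Degenerate cases also need a moment's care, for example chains of strands in $w$, $y$ or $G$ that collapse into edges or free circles. The free circles are absorbed by the simultaneous-pairing description in Lemma~\ref{lemm:gluingfacts}. Since $G'$ and $G''$ are simply whatever the partial gluing produces, possibly with free-circle components, no multiplicativity of $f$ (Lemma~\ref{lemm:mult}) is needed here.
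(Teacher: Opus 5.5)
Your proposal is correct and follows the paper's proof essentially verbatim. For (a) you absorb $y$ and $w$ into the test fragment $G$ to obtain an $(s+t)$-fragment $G'$ that depends only on $y$, $w$ and $G$; for (b) you glue $z$ into $G$ first. In both parts the closure becomes a closure of $x$ itself, exactly as in the paper, whose argument likewise proves the graph identity for each fragment in the support of $x$ with the same $G'$ and then uses linearity and $x\in N_{s+t}$, as your linearity remark does.
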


\begin{proof}
For (a), let $x\in N_{s+t}$, $y\in\Frag{r+s}$ and
$w\in\Frag{t+u}$, so that
$w\circ x\circ y\in\mathbb{C}^{(\Frag{r+u})}$, and let
$G\in\Frag{r+u}$. Form the $(s+t)$-fragment $G'$ by gluing the $r$
inputs of $y$ to the first $r$ open ends of $G$ and the $u$ outputs
of $w$ to the last $u$ open ends of $G$, labelling the $s$ outputs
of $y$ as $1,\dotsc,s$ and the $t$ inputs of $w$ as
$s+1,\dotsc,s+t$. For each fragment $x'$ in the support of $x$,
the full closures $(w\circ x'\circ y)\glue G$ and $x'\glue G'$
perform the same simultaneous unification, by
Lemma~\ref{lemm:gluingfacts}(i); extending linearly,
$f\bigl((w\circ x\circ y)\glue G\bigr)=f(x\glue G')=0$. For (b),
given a closure $G$ of
$x\sqcup z$, first glue the open ends of $z$ into $G$, obtaining a
fragment $G_{z}$ of arity $s+t$, possibly with extra closed
components; then
$f\bigl((x\sqcup z)\glue G\bigr)=f(x\glue G_{z})=0$.
\end{proof}

We may now define the category. The \emph{connection category}
$\conncat$ has objects the natural numbers, the object $t$ being
written $\genX^{\otimes t}$; its morphism spaces are
\[
\Hom(\genX^{\otimes s},\genX^{\otimes t})
\coloneqq\mathbb{C}^{(\Frag{s+t})}/N_{s+t},
\]
a fragment being read as a morphism from its first $s$ open ends to
its last $t$; composition is induced by the composition of
fragments, which descends to the quotients by
Lemma~\ref{lemm:ideal}(a), and the identity of $\genX^{\otimes t}$
is the class of $\strand^{t}$, by Lemma~\ref{lemm:gluingfacts}(ii).
The tensor product is addition on objects and is induced by
disjoint union on morphisms, with the block relabelling that the
input--output reading demands: for $F\in\Frag{s+t}$ and
$F'\in\Frag{s'+t'}$, the disjoint union is read as a morphism with
inputs the inputs of $F$ followed by those of $F'$ and outputs the
outputs of $F$ followed by those of $F'$; concretely, $F$-input
$i\mapsto i$, $F'$-input $i\mapsto s+i$, $F$-output
$j\mapsto s+s'+j$, and $F'$-output $j\mapsto s+s'+t+j$. This
descends by Lemma~\ref{lemm:ideal}(b); it is
strictly associative and unital with unit
$\unitobj\coloneqq\genX^{\otimes 0}$, since disjoint union of
isomorphism classes is so. Thus $\conncat$ is a
$\mathbb{C}$-linear strict monoidal category.

\begin{lemm}\label{lemm:simpleunit}
$\End(\unitobj)=\mathbb{C}\cdot[\emptygraph]$.
\end{lemm}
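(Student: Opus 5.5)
The plan is to compute $\End(\unitobj)=\mathbb{C}^{(\Frag{0})}/N_{0}$ directly. Its dimension is $\rk\connmat{f}{0}$, which is at most $R^{0}=1$ by (H2). So it suffices to show that the class of $\emptygraph$ is nonzero: a nonzero vector in a space of dimension at most one spans it.

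For the nonvanishing, I will pair $\emptygraph$ with itself. The full gluing of two $0$-fragments is their disjoint union, so $\Theta_{0}(\emptygraph)$ sends $\emptygraph$ to $f(\emptygraph\sqcup\emptygraph)=f(\emptygraph)=1$ by (H1). Hence $\emptygraph\notin N_{0}$, and $[\emptygraph]\neq0$.

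To make the identification explicit rather than purely dimensional, I will also show that every graph $G$ satisfies $[G]=f(G)[\emptygraph]$. For any graph $H$, Lemma~\ref{lemm:mult} gives
\[
\Theta_{0}\bigl(G-f(G)\emptygraph\bigr)(H)=f(G\sqcup H)-f(G)f(H)=0,
\]
so $G-f(G)\emptygraph\in N_{0}$. This also shows that the isomorphism $\End(\unitobj)\to\mathbb{C}$ is induced by $f$, and that $[\emptygraph]$ is the identity of $\unitobj$, being the empty strand $\strand^{0}$.

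No step is a real obstacle. The only point to check is that $\Frag{0}$ consists exactly of graphs with free circles allowed, and that gluing with no open ends is disjoint union, which holds by the definitions in Section~\ref{subsec:fragments}.
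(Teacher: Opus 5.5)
Your proof is correct and is essentially the paper's: (H2) at $t=0$ gives $\dim\End(\unitobj)=\rk\connmat{f}{0}\le 1$, and $f(\emptygraph\glue\emptygraph)=1$ shows $[\emptygraph]\ne0$. Your extra identity $[G]=f(G)[\emptygraph]$, obtained via Lemma~\ref{lemm:mult}, is exactly the remark the paper makes immediately after the lemma.
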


\begin{proof}
By construction
$\dim\End(\unitobj)=\rk\connmat{f}{0}$, which is $1$ by (H1) and
(H2), and $[\emptygraph]\ne 0$ since
$f(\emptygraph\glue\emptygraph)=1$.
\end{proof}

We henceforth identify $\End(\unitobj)$ with $\mathbb{C}$ via
$[\emptygraph]\mapsto 1$; every closed graph $G$ thereby represents
the scalar $f(G)$. Indeed $[G]=f(G)[\emptygraph]$: for every
closed graph $H$,
$f\bigl((G-f(G)\,\emptygraph)\sqcup H\bigr)
=f(G\sqcup H)-f(G)f(H)=0$ by Lemma~\ref{lemm:mult}, so
$G-f(G)\,\emptygraph\in N_{0}$.

The category is symmetric. The braiding of $\genX$ with itself is
the class of $P_{(1\,2)}$, and more generally the classes of the
permutation fragments realise every permutation of tensor factors;
by Lemma~\ref{lemm:gluingfacts}(iv) they compose as the permutations
do. Since the monoidal structure is strict, the hexagon axioms
reduce to identities among permutation fragments of the form
$P_{\pi\sqcup\mathrm{id}}\circ P_{\mathrm{id}\sqcup\pi}$, which hold
by the same composition rule; naturality of the braiding is
Lemma~\ref{lemm:gluingfacts}(iv) applied to the relabelling the
braiding performs; and the braiding squares to the identity.

The object $\genX$ is self-dual. Both the coevaluation
$\mathrm{coev}\in\Hom(\unitobj,\genX^{\otimes2})$ and the evaluation
$\mathrm{ev}\in\Hom(\genX^{\otimes2},\unitobj)$ are the class of the
strand $\strand$, read with both open ends as outputs and both as
inputs respectively; the snake identities are literal strand
unifications, viz.\ instances of Lemma~\ref{lemm:gluingfacts}(ii).
Consequently every object of $\conncat$ is self-dual: the
evaluation and coevaluation of $\genX^{\otimes t}$ are the classes
of $t$ parallel strands, the $i$-th end of one block joined to the
$i$-th end of the other, and the snake identities again reduce to
strand unifications. The \emph{trace} of
$u\in\End(\genX^{\otimes t})$ is its full strand closure --- the
scalar obtained by gluing the $i$-th output back to the $i$-th
input for every $i$. On the class of a fragment the trace is the
$f$-value of its strand closure, and on an arbitrary morphism it
is obtained by linear extension. In particular
\[
\tr(\mathrm{id}_{\genX})=f(\freecircle),
\]
the categorical dimension of $\genX$, closing a single strand
producing the free circle.

Finally, closed graphs decompose into stars. Let
$\sigma_{d}\in\Hom(\unitobj,\genX^{\otimes d})$ denote the class of
the \emph{$d$-star}: a single vertex with $d$ pendant edges, its
open ends labelled $1,\dotsc,d$; in particular $\sigma_{0}$ is
the isolated vertex. For a closed graph $G$ without free
circles, fix orders on its vertices and edges, and for each edge
an order of its two half-edges; then in $\conncat$
\begin{equation}\label{eq:stars}
[G]=
\Bigl(\bigotimes_{e\in E}\mathrm{ev}\Bigr)
\circ P_{G}\circ
\Bigl(\bigotimes_{v\in V}\sigma_{\deg v}\Bigr),
\end{equation}
where $P_{G}$ is the permutation fragment matching each star leg to
its half-edge. Indeed, the right-hand side glues the stars' pendant
edges in pairs along $E$, by
Lemma~\ref{lemm:gluingfacts}(i) and (iii), and each matched pair
unifies into the single edge $e$, reconstructing $G$; a loop sends
two legs of the same star to one evaluation, which is what the
half-edge convention of Section~\ref{subsec:graphs} provides for.

\subsection{The trace calculus}\label{subsec:tracecalc}

The following identities are standard consequences of rigidity and
symmetry; in $\conncat$ each asserts that two closed graphs are
equal. We abbreviate $uv\coloneqq u\circ v$.

\begin{lemm}\label{lemm:tracecalc}
Let $u$ and $v$ be morphisms of $\conncat$ of appropriate types.
\begin{enumerate}
\item[(a)] $\tr(uv)=\tr(vu)$ for
  $u\colon Y\to Z$ and $v\colon Z\to Y$.
\item[(b)] $\tr_{Y\otimes Z}(u\otimes v)=\tr(u)\tr(v)$ for
  $u\in\End(Y)$, $v\in\End(Z)$.
\item[(c)] $\tr\bigl(c_{Z,Z}\circ(u\otimes v)\bigr)=\tr(uv)$ for
  $u,v\in\End(Z)$, where $c$ is the braiding.
\end{enumerate}
\end{lemm}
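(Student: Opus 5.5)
The plan is to reduce each identity to an equality of closed graphs, following the section's stated method, and only then apply $f$. By linearity of composition, tensor product and trace in each argument, it suffices to treat $u$ and $v$ as classes of single fragments. The map from fragments to morphisms is well defined by Lemma~\ref{lemm:ideal}, so any representatives may be used. With $Y=\genX^{\otimes s}$ and $Z=\genX^{\otimes t}$, the trace of an endomorphism given by a fragment $w\in\Frag{t+t}$ is $f$ of the graph obtained by gluing open end $i$ of $w$ to open end $t+i$ for every $i$. By the description of iterated gluing in the proof of Lemma~\ref{lemm:gluingfacts}, any composite followed by such a closure is a single simultaneous unification along one pairing of open ends. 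So in each case I will identify the total pairing of open ends on the disjoint union of the fragments involved, and show that the two sides use the same pairing up to relabelling.

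For (a), take $u\in\Frag{s+t}$ and $v\in\Frag{t+s}$. Then $\tr(uv)$ pairs the outputs of $v$ with the inputs of $u$ (composition), and the outputs of $u$ with the inputs of $v$ (closure). Meanwhile $\tr(vu)$ pairs the outputs of $u$ with the inputs of $v$ (composition), and the outputs of $v$ with the inputs of $u$ (closure). These are the same pairing on $u\sqcup v$, so the two closed graphs coincide.

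For (b), the block relabelling in the definition of $\otimes$ sends each input and output of $u$ and of $v$ to positions such that closing output $i$ to input $i$ of $u\otimes v$ joins outputs of $u$ to inputs of $u$, and outputs of $v$ to inputs of $v$. The closed graph is therefore $\overline{u}\sqcup\overline{v}$, where the bar denotes the closure. Lemma~\ref{lemm:mult} then gives $f(\overline u\sqcup\overline v)=f(\overline u)f(\overline v)$.

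For (c), $c_{Z,Z}$ is the class of $P_{\pi}$, where $\pi$ swaps the two blocks of size $t$. Closing $c_{Z,Z}\circ(u\otimes v)$ collapses the strands of $P_\pi$ by Lemma~\ref{lemm:gluingfacts}(ii). The effect is that output $j$ of $u$ is joined to input $j$ of $v$, and output $j$ of $v$ to input $j$ of $u$. This is exactly the pairing defining $\tr(uv)$, so (c) reduces to the same bookkeeping as (a).

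The main obstacle is purely bookkeeping in (c): the index conventions of the block relabelling and of $P_\pi$ must line up so that the swapped strands connect the $u$-outputs to the $v$-inputs rather than back to the $u$-inputs. I would check this by tracing a single label $j$ through the relabelling maps written in Section~\ref{subsec:categorydef}. Degenerate strands are harmless here: if a fragment contains a bare strand, closed chains may become free circles, but this happens identically on both sides since the pairings agree.
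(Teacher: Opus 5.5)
Your proposal is correct and is essentially the paper's proof: reduce by bilinearity to single fragments, observe that in (a) and (c) both sides are the $f$-value of one closed graph obtained from the same pairing of open ends on $u\sqcup v$ (the interface and closing strands merging away), and in (b) use that the closure of a disjoint union is the disjoint union of the closures together with Lemma~\ref{lemm:mult}. The index check you flag in (c) is harmless: the block swap is an involution sending position $j$ to $t+j$ and back, so output $j$ of $u$ lands on input $j$ of $v$ under either reading.
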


\begin{proof}
By bilinearity it suffices to prove each identity when the
morphisms are represented by individual fragments.
(a) Both traces are the $f$-value of the same closed graph: the
strand closure of $u\circ v$ glues every open end of $u$ to the
matching open end of $v$ --- the composition interface directly, and
the outer legs through the closing strands, which merge away by
Lemma~\ref{lemm:gluingfacts}(ii) --- and the closure of $v\circ u$
performs the identical set of end-pairings.
(b) The closure of a disjoint union is the disjoint union of the
closures, and $f$ is multiplicative by Lemma~\ref{lemm:mult}.
(c) Closing $c\circ(u\otimes v)$ connects the output of $u$ through
the swap to the input of $v$, and the output of $v$ back to the
input of $u$; the resulting closed graph is the cyclic gluing of $u$
and $v$, which is exactly the closure of $u\circ v$, the
interface strands again merging away.
\end{proof}

\begin{lemm}\label{lemm:negligible}
If $u\in\Hom(\genX^{\otimes s},\genX^{\otimes t})$ satisfies
$\tr(uv)=0$ for every
$v\in\Hom(\genX^{\otimes t},\genX^{\otimes s})$, then $u=0$.
\end{lemm}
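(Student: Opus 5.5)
The plan is to reduce the statement to the definition of the kernel $N_{s+t}$. A morphism $u\in\Hom(\genX^{\otimes s},\genX^{\otimes t})$ is the class of some $x\in\mathbb{C}^{(\Frag{s+t})}$. Showing $u=0$ amounts to showing $x\in N_{s+t}$, that is, $f(x\glue G)=0$ for every $G\in\Frag{s+t}$. So the task is to realise each full gluing $x\glue G$ as a trace $\tr(uv)$ for a suitable morphism $v\colon\genX^{\otimes t}\to\genX^{\otimes s}$.

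Given $G\in\Frag{s+t}$, I would construct $v$ by relabelling the open ends of $G$. Its ends $s+1,\dotsc,s+t$, which are the ends to be glued to the outputs of $x$, become the inputs $1,\dotsc,t$ of $v$. Its ends $1,\dotsc,s$ become the outputs $t+1,\dotsc,t+s$ of $v$. Let $v$ be the class of this relabelled fragment in $\Hom(\genX^{\otimes t},\genX^{\otimes s})$. I would then check, for each fragment $x'$ in the support of $x$, that the strand closure of $v\circ x'$ and the full gluing $x'\glue G$ are the same graph. In the composite, the outputs of $x'$ are glued to the inputs of $v$, that is, end $s+j$ of $x'$ to end $s+j$ of $G$. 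The closure then glues output $i$ of $v\circ x'$, which is end $i$ of $G$, to input $i$, which is end $i$ of $x'$, through a closing strand. That strand collapses by Lemma~\ref{lemm:gluingfacts}(ii). By the single-pairing description used in the proof of Lemma~\ref{lemm:gluingfacts}(i), both graphs are the quotient of $x'\sqcup G$ along the same pairing of ends, so they coincide.

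Extending linearly gives $f(x\glue G)=\tr(v\circ u)=\tr(uv)$, where the last equality is Lemma~\ref{lemm:tracecalc}(a). The trace is well defined on classes because it is the pairing of the closure with $\emptygraph$ via Lemma~\ref{lemm:simpleunit}; more directly, $\tr$ of a morphism in $N$ vanishes by Lemma~\ref{lemm:ideal}. The hypothesis gives $\tr(uv)=0$, so $f(x\glue G)=0$ for all $G$, hence $x\in N_{s+t}$ and $u=0$.

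The only real obstacle is bookkeeping, namely checking that the input/output relabelling of $v$ matches the closure convention: output $i$ is glued to input $i$, and the composite's outputs are renumbered $s+1,\dotsc$. A mismatch there would introduce a permutation fragment. Even in that case, composing $v$ with the appropriate $P_\pi$ absorbs the permutation by Lemma~\ref{lemm:gluingfacts}(iv), so the argument goes through either way.
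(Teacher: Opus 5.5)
Your proof is correct and follows the paper's argument: you relabel each test fragment $G$ into a morphism $v$ so that the trace closure reproduces $f(x\glue G)$, and the hypothesis then places a lift of $u$ in $N_{s+t}$. Your end-by-end bookkeeping and the detour through $\tr(vu)$ with cyclicity only spell out what the paper compresses into ``a fixed relabelling of the $s+t$ open ends''; you could also close $u\circ v$ directly and skip the cyclicity step.
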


\begin{proof}
The trace closure of $u$ against a fragment $v$ differs from the
defining pairing $u\glue G$ of Section~\ref{subsec:categorydef} only
by a fixed relabelling of the $s+t$ open ends. Relabelling is a
bijection on fragments, so the two families of test values coincide,
and the hypothesis places a lift of $u$ in $N_{s+t}$.
\end{proof}

In the language of tensor categories, Lemma~\ref{lemm:negligible}
says that $\conncat$ has no nonzero negligible morphisms; it is the
single point at which the definition of $\conncat$ as a quotient
\emph{by} the connection kernel pays off, and it is the
engine of semisimplicity in Section~\ref{sec:semisimple}.

\subsection{The Cauchy completion}\label{subsec:cauchy}

The category $\conncat$ need not admit all finite direct sums or
split all its idempotents, and both are needed below. We
therefore pass to the
\emph{Cauchy completion}
\[
\cauchy\coloneqq\Kar\bigl(\Add(\conncat)\bigr),
\]
where $\Add$ is the additive envelope --- objects are finite tuples
of objects of $\conncat$, morphisms are matrices --- and $\Kar$ is
the Karoubi envelope --- objects are pairs $(Y,e)$ with
$e\in\End(Y)$ idempotent, and morphisms $(Y,e)\to(Z,e')$ are the
$a\colon Y\to Z$ with $a=e'ae$. The Karoubi envelope of an additive
category is additive, with $(Y,e)\oplus(Z,e')=(Y\oplus Z,e\oplus
e')$, so $\cauchy$ admits finite direct sums and split idempotents
together.\footnote{The order matters: applying $\Add$ \emph{after}
$\Kar$ need not produce an idempotent-complete category, since a
summand of a direct sum need not be a direct sum of summands. The
distinction disappears once $\cauchy$ is known to be semisimple
(Section~\ref{sec:semisimple}), after which
$\Add(\Kar(\conncat))\simeq\cauchy$.}

The structure of $\conncat$ extends. The tensor product extends
bilinearly to tuples and to idempotent pairs, with the symmetry
acting componentwise; morphism spaces remain finite-dimensional,
being subspaces of finite direct sums of morphism spaces of
$\conncat$; the canonical inclusion is fully faithful, so
$\End(\unitobj)=\mathbb{C}$ is unchanged; and duals extend by
$(Y_{1},\dotsc,Y_{m})^{\vee}=(Y_{1}^{\vee},\dotsc,Y_{m}^{\vee})$
and $(Y,e)^{\vee}=(Y^{\vee},e^{\vee})$, with evaluation and
coevaluation corrected by the idempotents --- the cup is
$(e\otimes e^{\vee})\circ\mathrm{coev}_{Y}$ and the cap
$\mathrm{ev}_{Y}\circ(e^{\vee}\otimes e)$; the first snake
identity for $(Y,e)$ then unwinds to the defining formula of the
dual morphism $e^{\vee}$, and the second collapses onto the snake for
$Y$, and on tuples the snakes reduce entrywise. Traces are computed accordingly:
the trace of a matrix of endomorphisms is the sum of the traces of
its diagonal entries, and the trace of $a\in\End(Y,e)$ is
$\tr_{Y}(a)$. Thus $\cauchy$ is a rigid symmetric
$\mathbb{C}$-linear monoidal category with finite-dimensional
morphism spaces and $\End(\unitobj)=\mathbb{C}$, and it is
generated by $\genX$
under tensor products, direct sums and summands.

The nondegeneracy of Lemma~\ref{lemm:negligible} survives the
completion.

\begin{lemm}\label{lemm:nondegD}
Let $P$ and $Q$ be objects of $\cauchy$ and let $a\colon P\to Q$
satisfy $\tr(ba)=0$ for every $b\colon Q\to P$. Then $a=0$.
\end{lemm}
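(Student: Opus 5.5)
The plan is to reduce the statement to Lemma~\ref{lemm:negligible}, first for tuples (the additive envelope) and then for idempotent pairs (the Karoubi envelope).

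\emph{Tuples.} Suppose $P=(Y_{1},\dotsc,Y_{m})$ and $Q=(Z_{1},\dotsc,Z_{n})$ are objects of $\Add(\conncat)$, and $a=(a_{ji})$ is a matrix with $a_{ji}\colon Y_{i}\to Z_{j}$. Fix indices $i,j$ and a morphism $c\colon Z_{j}\to Y_{i}$ of $\conncat$. Let $b$ be the matrix whose only nonzero entry is $c$, placed in position $(i,j)$. Then $ba\in\End(P)$ has diagonal entries $(ba)_{i'i'}=\delta_{i'i}\,c\,a_{ji}$. By the description of traces in Section~\ref{subsec:cauchy} (the trace is the sum of the diagonal traces), this gives $0=\tr(ba)=\tr(c\,a_{ji})$. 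Since $c$ was arbitrary, Lemma~\ref{lemm:negligible} yields $a_{ji}=0$. That lemma is stated for morphisms $\genX^{\otimes s}\to\genX^{\otimes t}$, and every object of $\conncat$ has this form, so it applies. Hence $a=0$.

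\emph{Idempotent pairs.} Now let $P=(Y,e)$ and $Q=(Z,e')$ with $Y,Z\in\Add(\conncat)$, and let $a\colon Y\to Z$ satisfy $a=e'ae$. Given any $b\colon Z\to Y$ in $\Add(\conncat)$, the morphism $b'\coloneqq eb e'$ is a morphism $Q\to P$ in $\cauchy$. Using the trace in $\cauchy$, which is $\tr_{Y}$, and the cyclicity of Lemma~\ref{lemm:tracecalc}(a), we get
\[
\tr(b'a)=\tr_{Y}(ebe'a)=\tr_{Y}(eb\,a)=\tr_{Y}(b\,ae)=\tr_{Y}(ba),
\]
since $e'a=a$ and $ae=a$. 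So the hypothesis gives $\tr_{Y}(ba)=0$ for every $b\colon Z\to Y$ in $\Add(\conncat)$, and the tuple case shows $a=0$.

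\emph{Remaining check.} The one point needing care is that cyclicity $\tr(uv)=\tr(vu)$ continues to hold in $\Add(\conncat)$. For matrices it reduces entrywise to Lemma~\ref{lemm:tracecalc}(a), by expanding $\tr(uv)=\sum_{i,j}\tr(u_{ij}v_{ji})$. We also need to confirm that the trace on $(Y,e)$ defined earlier is literally $\tr_{Y}$ of the underlying morphism, which is how Section~\ref{subsec:cauchy} defined it. Beyond this bookkeeping there is no real obstacle.
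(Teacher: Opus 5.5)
Your proposal is correct and follows the paper's proof essentially verbatim: on $\Add(\conncat)$ you test against a matrix supported in the single entry $(i,j)$ and reduce to Lemma~\ref{lemm:negligible}, and on Karoubi objects you test against $ebe'$ and use $a=e'ae$ together with cyclicity. Your remark that cyclicity extends entrywise to matrices is a detail the paper leaves implicit.
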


\begin{proof}
First let $P$ and $Q$ be objects of $\Add(\conncat)$, say with
components $(Y_{i})_{i}$ and $(Z_{j})_{j}$, and let
$a=(a_{ji})$ as a matrix. For fixed indices $i$ and $j$, let $b$
be the matrix supported in entry $(i,j)$ with value an arbitrary
$b_{ij}\colon Z_{j}\to Y_{i}$. Then
$\tr(ba)=\tr(b_{ij}a_{ji})=0$ for every $b_{ij}$, and
Lemma~\ref{lemm:negligible} gives $a_{ji}=0$; as $i$ and $j$ were
arbitrary, $a=0$.

Now let $P=(Y,e)$ and $Q=(Z,e')$ be objects of $\cauchy$ and
$a=e'ae\colon P\to Q$ with $\tr(ba)=0$ for all $b\colon Q\to P$.
For an arbitrary $c\colon Z\to Y$ in $\Add(\conncat)$, put
$b\coloneqq ece'$, a morphism $Q\to P$. Then, using $a=e'ae$ and
Lemma~\ref{lemm:tracecalc}(a),
\[
\tr(ca)=\tr(c\,e'ae)=\tr(ece'\,a)=\tr(ba)=0 ,
\]
and the first paragraph gives $a=0$.
\end{proof}

\section{Moderate growth, nilpotent traces, and the super fibre
functor}\label{sec:semisimple}

We are now ready to prove that $\cauchy$ is semisimple abelian
and admits a super fibre functor. The whole
argument rests on three facts: the endomorphism spaces of tensor
powers in $\cauchy$ grow at most exponentially; nilpotent
endomorphisms have trace zero;
and the trace pairing is nondegenerate
(Lemma~\ref{lemm:nondegD}). The rest is finite-dimensional algebra
and a theorem of Deligne.

\subsection{Exponential growth and nilpotent traces}
\label{subsec:growth}

\begin{lemm}\label{lemm:growthD}
Let $Z\in\cauchy$ be a direct summand of
$\bigoplus_{j=1}^{m}\genX^{\otimes n_{j}}$ with $m\ge1$, and put
$n_{\max}\coloneqq\max_{j}n_{j}$. Then
\[
\dim\End(Z^{\otimes N})\le m^{2N}R^{2n_{\max}N}
\qquad(N\ge 0).
\]
In particular $\dim\End(Z^{\otimes N})<N!$ for all sufficiently
large $N$. Every object of $\cauchy$ is such a summand.
\end{lemm}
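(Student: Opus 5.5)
The plan is to handle the three claims in reverse order of logical dependence: first that every object of $\cauchy$ is such a summand, then the dimension bound, then the comparison with $N!$.

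\emph{Every object is a summand.} An object of $\cauchy$ is a pair $(Y,e)$ with $Y=(\genX^{\otimes n_{1}},\dotsc,\genX^{\otimes n_{m}})$ an object of $\Add(\conncat)$, that is, $Y=\bigoplus_{j}\genX^{\otimes n_{j}}$. The pair $(Y,e)$ is a direct summand of $(Y,\mathrm{id}_{Y})$, split by $e$ and $\mathrm{id}-e$. If $m=0$ (the zero object), I will replace $Y$ by $\genX^{\otimes 0}$ with the zero idempotent, which secures $m\ge1$.

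\emph{The dimension bound.} Let $Z$ be a summand of $Y=\bigoplus_{j=1}^{m}\genX^{\otimes n_{j}}$, with inclusion $\iota$ and projection $\pi$ satisfying $\pi\iota=\mathrm{id}_{Z}$. Then $Z^{\otimes N}$ is a summand of $Y^{\otimes N}$ via $\iota^{\otimes N}$ and $\pi^{\otimes N}$. The map $a\mapsto\iota^{\otimes N}a\,\pi^{\otimes N}$ is injective from $\End(Z^{\otimes N})$ into $\End(Y^{\otimes N})$, since composing with $\pi^{\otimes N}$ and $\iota^{\otimes N}$ recovers $a$. Hence $\dim\End(Z^{\otimes N})\le\dim\End(Y^{\otimes N})$.

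Tensor product distributes over direct sums in $\Add(\conncat)$, so $Y^{\otimes N}$ is the direct sum, over the $m^{N}$ words $(j_{1},\dotsc,j_{N})$, of the objects $\genX^{\otimes(n_{j_{1}}+\dotsb+n_{j_{N}})}$. Its endomorphism space is therefore a sum of $m^{2N}$ blocks $\Hom(\genX^{\otimes a},\genX^{\otimes b})$ with $a,b\le n_{\max}N$. Each block has dimension $\rk\connmat{f}{a+b}\le R^{a+b}\le R^{2n_{\max}N}$ by (H2), using $R\ge1$. Multiplying the number of blocks by the per-block bound gives the stated inequality $\dim\End(Z^{\otimes N})\le m^{2N}R^{2n_{\max}N}$.

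\emph{Comparison with $N!$.} The bound has the form $C^{N}$ with $C=m^{2}R^{2n_{\max}}$ fixed. By Stirling, or simply because $N!\ge(N/e)^{N}$, we have $C^{N}<N!$ once $N>eC$.

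I expect no real obstacle. The only point needing care is the injectivity of the transfer from $\End(Z^{\otimes N})$ to $\End(Y^{\otimes N})$, together with the bookkeeping showing that in $\Kar(\Add(\conncat))$ the tensor power of a summand is a summand of the tensor power. Both follow from functoriality of $\otimes$ on the retraction $\pi\iota=\mathrm{id}$.
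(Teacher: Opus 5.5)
Your proposal is correct and follows the paper's proof essentially step for step. It tensors the retraction $\pi\iota=\mathrm{id}_{Z}$ up to embed $\End(Z^{\otimes N})$ into $\End(Y^{\otimes N})$, expands $Y^{\otimes N}$ into $m^{N}$ word-indexed summands, and bounds each of the $m^{2N}$ blocks by $\rk\connmat{f}{a+b}\le R^{2n_{\max}N}$ using (H2) and $R\ge1$. You also make explicit two points the paper leaves implicit: every object $(Y,e)$ of $\cauchy$ is a summand of $(Y,\mathrm{id}_{Y})$, with the zero object absorbed by taking $m=1$; and the factorial comparison holds from the explicit threshold $N>e\,m^{2}R^{2n_{\max}}$, via $N!\ge(N/e)^{N}$.
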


\begin{proof}
Put $Y\coloneqq\bigoplus_{j=1}^{m}\genX^{\otimes n_{j}}$ and
choose $\iota\colon Z\to Y$ and $\rho\colon Y\to Z$ with
$\rho\iota=\mathrm{id}_{Z}$. Tensoring this retraction shows that
$Z^{\otimes N}$ is a retract of $Y^{\otimes N}$, and
$g\mapsto\iota^{\otimes N}g\rho^{\otimes N}$ embeds
$\End(Z^{\otimes N})$ linearly into $\End(Y^{\otimes N})$, with
left inverse $a\mapsto\rho^{\otimes N}a\iota^{\otimes N}$. The
$N$-th tensor power of the sum expands into $m^{N}$ summands
$\genX^{\otimes|w|}$ indexed by words $w$, each with
$|w|\le n_{\max}N$, so
\[
\dim\End(Z^{\otimes N})
\le\sum_{w,w'}\dim\Hom\bigl(\genX^{\otimes|w|},
\genX^{\otimes|w'|}\bigr)
\le m^{2N}\,R^{2n_{\max}N},
\]
using (H2) for each summand. The right-hand side is exponential in
$N$, hence eventually smaller than $N!$.
\end{proof}

\begin{lemm}[Cycle-trace identity]\label{lemm:cycletrace}
Let $Y\in\cauchy$, let $y_{1},\dotsc,y_{m}\in\End(Y)$ with
$m\ge1$, and let $\Phi_{m}(\pi)\in\End(Y^{\otimes m})$ denote the
action of $\pi\in S_{m}$ through the symmetry, which carries the
$i$-th tensor factor to the $\pi(i)$-th position. Then
\[
\tr\bigl(\Phi_{m}(\pi)\circ(y_{1}\otimes\dotsb\otimes y_{m})\bigr)
=\prod_{c}\tr\bigl(y_{i_{r}}\dotsm y_{i_{2}}y_{i_{1}}\bigr),
\]
the product running over the cycles
$c=(i_{1}\,i_{2}\,\dotsc\,i_{r})$ of $\pi$, fixed points included,
the factors of each cycle being composed in the order in which the
cycle visits them. In particular
$\tr(\Phi_{m}(\pi)\circ y^{\otimes m})=\prod_{c}\tr(y^{|c|})$ for
every $y\in\End(Y)$.
\end{lemm}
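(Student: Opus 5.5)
The plan is a reduction to a single cycle followed by induction on the cycle length, using only the trace calculus of Lemma~\ref{lemm:tracecalc} and the compatibility of $\Phi_{m}$ with composition; this compatibility holds in $\conncat$ by Lemma~\ref{lemm:gluingfacts}(iv) and transfers to $\cauchy$ because the symmetry acts componentwise. No graph closures are needed, since $\cauchy$ is already known to be rigid symmetric with the trace properties (a)--(c). Throughout, write $y\coloneqq y_{1}\otimes\dotsb\otimes y_{m}$.

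\emph{Step 1: conjugation and naturality.} Let $\sigma\in S_{m}$. Naturality of the symmetry gives
\[
\Phi_{m}(\sigma)\circ(y_{1}\otimes\dotsb\otimes y_{m})\circ\Phi_{m}(\sigma)^{-1}
=y_{\sigma^{-1}(1)}\otimes\dotsb\otimes y_{\sigma^{-1}(m)} .
\]
Combining this with Lemma~\ref{lemm:tracecalc}(a), the left-hand side of the claim becomes the corresponding expression for $\sigma\pi\sigma^{-1}$ with the factors relabelled along $\sigma$. This relabelling is exactly the one under which the right-hand side is invariant, since each cycle is carried to a cycle visiting the relabelled factors in the same cyclic order. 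We may therefore assume that the cycles of $\pi$ occupy consecutive blocks of positions. Then $\Phi_{m}(\pi)$ is the tensor product of the blockwise cycle actions, and so is $y$. Lemma~\ref{lemm:tracecalc}(b) then reduces the claim to a single $r$-cycle acting on $Y^{\otimes r}$. A fixed point contributes $\tr(y_{i})$ by the same lemma, as claimed.

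\emph{Step 2: one cycle, by induction on $r$.} Take $\pi=(1\,2\,\dotsc\,r)$; the case $r=1$ is trivial and $r=2$ is Lemma~\ref{lemm:tracecalc}(c). For $r\ge3$, factor
\[
\pi=(1\,2\,\dotsc\,r-1)\cdot(r-1\;\,r),
\]
with the transposition applied first. The transposition $(r-1\;\,r)$ acts on the last two factors only, and together with $y_{r-1}\otimes y_{r}$ it forms a block on $Y\otimes Y$. I would apply a partial-trace version of (c) to that pair: tracing out one copy of $Y$ from $c_{Y,Y}\circ(u\otimes v)$ yields $uv$, or $vu$ according to orientation. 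This partial-trace identity is itself a snake-identity computation in any rigid symmetric category. Using the compatibility of partial traces with composition by morphisms acting on the other factors, it converts the $r$-cycle trace into an $(r-1)$-cycle trace on $y_{1}\otimes\dotsb\otimes y_{r-2}\otimes(y_{r}y_{r-1})$, and the induction hypothesis finishes.

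The main obstacle I expect is Step~2's bookkeeping. One point is the direction convention: $\Phi_{m}(\pi)$ carries factor $i$ to position $\pi(i)$, so the composite along the cycle must come out as $y_{i_{r}}\dotsm y_{i_{1}}$ rather than its reverse. I would check this on $r=2$, where both orders agree by (a), and on $r=3$ by explicit string-diagram reasoning. The other point is justifying the partial-trace lemma in $\cauchy$ rather than only in $\conncat$. Here a fallback is available: both sides are multilinear in the $y_{i}$ and compatible with the idempotents. Hence it suffices to verify the identity in $\Add(\conncat)$, entrywise, and there it is an equality of closed graphs. Concretely, following strands through the closure joins the output of $y_{i}$ to the input of $y_{\pi(i)}$, and the closing strands merge away by Lemma~\ref{lemm:gluingfacts}(ii). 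The final sentence of the statement is the specialisation $y_{i}=y$.
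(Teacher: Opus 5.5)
Your proof is correct, but it takes a different route from the paper's.

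\textbf{The paper's proof.} The paper does no trace calculus here. It takes $Y=\genX^{\otimes t}$ and fragment representatives of the $y_{i}$. The strand closure of $\Phi_{m}(\pi)\circ(y_{1}\otimes\dotsb\otimes y_{m})$ joins the outputs of $y_{i}$ to the inputs of $y_{\pi(i)}$. The closed graph is therefore a disjoint union of one necklace per cycle, each necklace being the closure of $y_{i_{r}}\dotsm y_{i_{1}}$. Lemma~\ref{lemm:mult} then gives the product. The identity is pushed to $\cauchy$ by multilinearity, entrywise in $\Add(\conncat)$, and through Karoubi objects via $\tr_{(Y,e)}=\tr_{Y}$ on morphisms $eye$. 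That is exactly your fallback.

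\textbf{Your main route.} You conjugate into block form, split with Lemma~\ref{lemm:tracecalc}(b), and peel one factor off a cycle by a partial trace. This works intrinsically in any rigid symmetric category. It therefore also supplies the argument behind \eqref{eq:cyclefact} in the appendix, where the paper says only ``by induction on the cycles''.

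The direction bookkeeping you were worried about comes out right. Tracing out the second factor of $c_{Y,Y}\circ(y_{r-1}\otimes y_{r})$ gives $y_{r}y_{r-1}$, sitting in position $r-1$. The $(r-1)$-cycle then yields $\tr(y_{r}y_{r-1}\dotsm y_{1})$, as required.

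\textbf{What each approach buys.} Your route relies on facts the paper never establishes for $\cauchy$, though they are standard:
\begin{itemize}
\item the partial-trace calculus: compatibility with composition and tensoring by morphisms on the other factors, and $\mathrm{ptr}_{2}(c\circ(u\otimes v))=vu$;
\item agreement of the concretely defined trace on $\cauchy$ with the categorical trace of its rigid structure;
\item coherence for the non-strict tensor product on tuples and idempotent pairs.
\end{itemize}
The paper's graph-level argument sidesteps all of this. There every needed identity is a literal equality of closed graphs, and only the routine passage through $\Add$ and $\Kar$ remains. Your route, in exchange, is independent of the graph model and applies verbatim in the appendix's general setting.
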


\begin{proof}
Let first $Y=\genX^{\otimes t}$ and let each $y_{i}$ be the class
of a fragment. Both sides are then $f$-values of one closed graph:
the strand closure of
$\Phi_{m}(\pi)\circ(y_{1}\otimes\dotsb\otimes y_{m})$ joins the
outputs of the copy $y_{i}$ to the inputs of $y_{\pi(i)}$, so that
the copies belonging to a cycle $c$ form one closed necklace, which
is the closure of the composite $y_{i_{r}}\dotsm y_{i_{1}}$; the
necklaces of distinct cycles are disjoint, and $f$ multiplies over
them by Lemma~\ref{lemm:mult}. Both sides are multilinear in
$(y_{1},\dotsc,y_{m})$, so the identity extends from fragments to
all $y_{i}\in\End(\genX^{\otimes t})$. It passes to
$\Add(\conncat)$ by the entrywise computation of traces and of
matrix products, and to a Karoubi object $(Y,e)$ because
$\tr_{(Y,e)}$ is $\tr_{Y}$ on morphisms of the form $eye$. The
last assertion is the case $y_{1}=\dotsb=y_{m}=y$.
\end{proof}

\begin{theo}\label{theo:niltrace}
Every nilpotent endomorphism in $\cauchy$ has trace zero.
\end{theo}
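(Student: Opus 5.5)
The plan is to follow Schrijver's argument: show that a nilpotent endomorphism of nonzero trace makes $\dim\End(Y^{\otimes N})\ge N!$, and contradict Lemma~\ref{lemm:growthD}. Let $Y\in\cauchy$ and $y\in\End(Y)$ with $y^{p}=0$ for some $p\ge1$, and suppose $\tau\coloneqq\tr(y)\ne0$. Then $\tr(y^{j})=0$ for $j\ge p$, but $y^{j}$ for $2\le j<p$ may have nonzero trace. So we first reduce to a situation where every power $y^{j}$ with $j\ge2$ has trace zero.

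For the reduction, pick the least $r\ge1$ such that $\tr(y^{j})=0$ for all $j>r$; it exists since $y^{p}=0$. If $\tr(y^{r})\neq 0$, replace $y$ by $z\coloneqq y^{r}$. Then $z$ is nilpotent, $\tr(z)\ne0$, and $\tr(z^{j})=\tr(y^{rj})=0$ for $j\ge2$. So it suffices to show that whenever some nilpotent $y$ has $\tr(y)\ne0$, some power $y^{r}$ has nonzero trace while all higher powers have zero trace. Take $r$ to be the largest index with $\tr(y^{r})\ne0$; it is at least $1$ by hypothesis.

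Assume now that $\tr(y)=\tau\ne0$ and $\tr(y^{j})=0$ for all $j\ge2$. Fix $N$ and consider the $N!$ morphisms $\Phi_{N}(\pi)\circ y^{\otimes N}\in\End(Y^{\otimes N})$ for $\pi\in S_{N}$. We claim they are linearly independent. Pair them against $\Phi_{N}(\sigma^{-1})$, i.e.\ form the matrix
\[
A(\sigma,\pi)\coloneqq\tr\bigl(\Phi_{N}(\sigma^{-1})\Phi_{N}(\pi)\circ y^{\otimes N}\bigr)
=\tr\bigl(\Phi_{N}(\sigma^{-1}\pi)\circ y^{\otimes N}\bigr).
\]
By Lemma~\ref{lemm:cycletrace} this equals $\prod_{c}\tr(y^{|c|})$ over the cycles of $\sigma^{-1}\pi$. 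This is $\tau^{N}$ when $\sigma=\pi$, and $0$ otherwise, because a nonidentity permutation has a cycle of length at least $2$. Hence $A=\tau^{N}\cdot\mathrm{Id}$ is nonsingular. Any linear relation $\sum_{\pi}\lambda_{\pi}\Phi_{N}(\pi)y^{\otimes N}=0$, composed with $\Phi_{N}(\sigma^{-1})$ and traced, then forces $\lambda_{\sigma}=0$. Therefore $\dim\End(Y^{\otimes N})\ge N!$ for every $N$. This contradicts Lemma~\ref{lemm:growthD}, which applies to $Y$ since every object of $\cauchy$ is a summand of a sum of tensor powers of $\genX$.

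The main obstacle is the reduction, and it needs care in the ordering. Let $r$ be maximal with $\tr(y^{r})\ne0$ (nilpotence guarantees finiteness). Then $z=y^{r}$ satisfies $\tr(z)\ne0$ and $\tr(z^{j})=\tr(y^{rj})=0$ for $j\ge2$, since $rj>r$. This is exactly the hypothesis of the independence step, and $z$ is still nilpotent, though only the trace conditions are used. So the argument goes through without needing Newton identities or the full eigenvalue-style analysis. Everything else is the cycle-trace identity together with multiplicativity of $\Phi_{N}$, which holds because the symmetry in $\cauchy$ is componentwise.
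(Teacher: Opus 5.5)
Your proposal is correct and is essentially the paper's proof: the same reduction to $y^{r}$ with $r$ the largest exponent of nonzero trace, the same diagonal trace matrix from the cycle-trace identity, and the same contradiction with Lemma~\ref{lemm:growthD}. The only difference is cosmetic: you prove linear independence of the $\Phi_{N}(\pi)\circ y^{\otimes N}$ rather than of the $\Phi_{N}(\pi)$ themselves, and this yields the same bound $N!\le\dim\End(Y^{\otimes N})$.
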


\begin{proof}
This is Schrijver's argument \cite[Proposition~4]{Sch15},
transferred from his algebras of fragments to $\cauchy$. Let
$g\in\End(Y)$ be nilpotent with $\tr(g)\ne0$. Since $g^{M}=0$ for
some $M$, there is a largest exponent $t\ge1$ with
$\tr(g^{t})\ne0$; put $y\coloneqq g^{t}$, so that $\tr(y)\ne0$ and
$\tr(y^{s})=0$ for every $s\ge2$. For $m\ge1$ and $\pi\in S_{m}$,
Lemma~\ref{lemm:cycletrace} gives
\[
\tr\bigl(\Phi_{m}(\pi)\circ y^{\otimes m}\bigr)=
\begin{cases}
\tr(y)^{m}&\pi=\mathrm{id},\\
0&\text{otherwise,}
\end{cases}
\]
since a permutation other than the identity has a cycle of length
at least two. Hence the $m!$ endomorphisms $\Phi_{m}(\pi)$ of
$Y^{\otimes m}$ are linearly independent: if
$\sum_{\pi}\lambda_{\pi}\Phi_{m}(\pi)=0$, then composing with
$\Phi_{m}(\rho^{-1})$ and applying the functional
$a\mapsto\tr(a\circ y^{\otimes m})$ leaves
$\lambda_{\rho}\tr(y)^{m}=0$, so $\lambda_{\rho}=0$ for every
$\rho$. Thus $m!\le\dim\End(Y^{\otimes m})$ for every $m\ge1$,
contradicting Lemma~\ref{lemm:growthD} for large $m$.
\end{proof}

Etingof and Penneys have proved the vanishing of nilpotent traces
in much greater generality, for braided categories of moderate
growth \cite[Cor.~1.5, Lem.~1.6]{EP26}, a notion which the
exponential bound of Lemma~\ref{lemm:growthD} supplies; without a
growth hypothesis there are rigid symmetric categories with
nilpotent endomorphisms of nonzero trace \cite[\S3.4]{EP26}.
Appendix~\ref{app:zeta} proves more in our setting: applying
Corollary~\ref{coro:zetagrowth} with $A=m^{2}R^{2n_{\max}}$ makes
the trace zeta function of every endomorphism rational, with
explicit degree bounds.

\subsection{Semisimplicity}\label{subsec:semisimplicity}

\begin{theo}\label{theo:radical}
For every $Y\in\cauchy$ the algebra $\End(Y)$ is semisimple.
\end{theo}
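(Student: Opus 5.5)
The plan is to show that the Jacobson radical $J\coloneqq\operatorname{rad}\End(Y)$ vanishes. Since $\End(Y)$ is a finite-dimensional $\mathbb{C}$-algebra (Section~\ref{subsec:cauchy}), the Wedderburn theorem then makes it semisimple. The argument combines Theorem~\ref{theo:niltrace} with the nondegeneracy of Lemma~\ref{lemm:nondegD}, applied with $P=Q=Y$.

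First, recall that in a finite-dimensional algebra the radical $J$ is a nilpotent two-sided ideal. Let $a\in J$ and let $b\in\End(Y)$ be arbitrary. Then $ba\in J$, because $J$ is a left ideal, and so $ba$ is nilpotent. Theorem~\ref{theo:niltrace} therefore gives $\tr(ba)=0$. Since $b$ was arbitrary, Lemma~\ref{lemm:nondegD} with $P=Q=Y$ forces $a=0$. Hence $J=0$, and $\End(Y)$ is semisimple.

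Nothing beyond the earlier results is needed. The substantive work has already been done: the factorial-versus-exponential comparison lies inside Theorem~\ref{theo:niltrace}, and the quotient by the connection kernel lies inside Lemma~\ref{lemm:nondegD}. The only point to check is that Lemma~\ref{lemm:nondegD} is stated for arbitrary objects of $\cauchy$, Karoubi objects included. It is, so the trace used there is the same categorical trace to which Theorem~\ref{theo:niltrace} applies.
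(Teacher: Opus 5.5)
Your proof is correct and is essentially the paper's argument: you compose radical elements with arbitrary endomorphisms to get nilpotent, hence traceless (Theorem~\ref{theo:niltrace}), morphisms, and then Lemma~\ref{lemm:nondegD} kills the radical. The only difference is cosmetic: you get $ba\in J$ directly because $J$ is a left ideal, whereas the paper forms $ju$ and passes to $\tr(uj)$ by cyclicity (Lemma~\ref{lemm:tracecalc}(a)).
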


\begin{proof}
Let $j$ be an element of the Jacobson radical of the
finite-dimensional algebra $\End(Y)$ and let $u\in\End(Y)$ be
arbitrary. Then $ju$ lies in the radical, hence is nilpotent ---
the radical of a finite-dimensional algebra is a nilpotent ideal
--- so $\tr(ju)=0$ by Theorem~\ref{theo:niltrace}, and by cyclicity
(Lemma~\ref{lemm:tracecalc}(a)) also $\tr(uj)=0$. Now
Lemma~\ref{lemm:nondegD} applied with $a=j$ gives $j=0$.
\end{proof}

Decompose the identity of any $Y\in\cauchy$ into orthogonal
primitive idempotents of the semisimple algebra $\End(Y)$; since
idempotents split in $\cauchy$, this expresses $Y$ as a finite
direct sum of objects with primitive identity. We call an object
$S$ with $\End(S)=\mathbb{C}$ an \emph{atom} --- the word
\emph{simple} is reserved until the abelian structure is
available --- and the summands just
produced are atoms, because a corner $e\End(Y)e$ of a semisimple
$\mathbb{C}$-algebra at a primitive idempotent is a
finite-dimensional division algebra over the algebraically closed
field $\mathbb{C}$, hence $\mathbb{C}$ itself.

\begin{lemm}\label{lemm:dichotomy}
A nonzero morphism between atoms is an isomorphism, and
non-isomorphic atoms admit no nonzero morphisms between them.
\end{lemm}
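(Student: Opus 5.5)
The plan is to use the nondegeneracy of the trace pairing (Lemma~\ref{lemm:nondegD}) together with the fact that atoms have one-dimensional endomorphism algebras. Let $S$ and $T$ be atoms and $a\colon S\to T$ a nonzero morphism. By Lemma~\ref{lemm:nondegD} there is some $b\colon T\to S$ with $\tr(ba)\ne0$.

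Since $\End(S)=\mathbb{C}$, we can write $ba=\lambda\,\mathrm{id}_{S}$. Likewise, since $\End(T)=\mathbb{C}$, we can write $ab=\mu\,\mathrm{id}_{T}$. Cyclicity (Lemma~\ref{lemm:tracecalc}(a)) gives
\[
\lambda\dim S=\tr(ba)=\tr(ab)=\mu\dim T.
\]
This quantity is nonzero, so $\lambda\ne0$ and $\mu\ne0$. Then $\lambda^{-1}b$ is a left inverse of $a$ and $\mu^{-1}b$ is a right inverse. Hence $a$ is an isomorphism, and the two scalars in fact agree.

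The second assertion is the contrapositive of the first. If there were a nonzero morphism between two atoms, it would be an isomorphism, so the atoms would be isomorphic.

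The only step needing care is producing $b$, and this is exactly the statement of Lemma~\ref{lemm:nondegD} with $P=S$ and $Q=T$. The trace in $\cauchy$ is the one defined via the Karoubi and additive structure. Cyclicity of this trace also holds in $\cauchy$: it transfers from $\conncat$ by the same entrywise and $\tr_{(Y,e)}=\tr_{Y}$ reasoning used in the proof of Lemma~\ref{lemm:nondegD}, where it was already used. So there is no real obstacle. One small point is that an atom is nonzero, since $\End(S)=\mathbb{C}\ne0$. It is also not necessary to know separately that $\dim S\ne0$: the nonvanishing of $\tr(ba)$ already forces both $\lambda$ and $\mu$ to be nonzero.
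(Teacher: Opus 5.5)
Your proof is correct and follows the paper's route: Lemma~\ref{lemm:nondegD} supplies $b$ with $\tr(ba)\ne0$, and $\End(S)=\mathbb{C}$ makes $ba$ a nonzero scalar, which gives a left inverse of $a$. The only difference is the right inverse. You obtain $ab\ne0$ from cyclicity of the trace, whereas the paper notes that $a(\mu^{-1}b)$ is an idempotent of $\End(T)=\mathbb{C}$, nonzero because $a(\mu^{-1}b)a=a\ne0$, hence equal to $\mathrm{id}_{T}$; this step needs no second appeal to the trace.
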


\begin{proof}
Let $a\colon S\to T$ be nonzero with $S$, $T$ atoms. By
Lemma~\ref{lemm:nondegD} there is $b\colon T\to S$ with
$\tr(ba)\ne0$. Since $\End(S)=\mathbb{C}$ we have
$ba=\mu\,\mathrm{id}_{S}$ with $\mu\ne0$, so
$v\coloneqq\mu^{-1}b$ satisfies $va=\mathrm{id}_{S}$; then
$av\in\End(T)=\mathbb{C}$ is an idempotent, nonzero because
$ava=a\ne0$, hence $av=\mathrm{id}_{T}$. Thus $a$ is an
isomorphism.
\end{proof}

\begin{theo}\label{theo:semisimple}
Choose representatives $\{S_{\gamma}\}$ of the isomorphism classes
of atoms. The functor
$Y\mapsto\bigl(\Hom(S_{\gamma},Y)\bigr)_{\gamma}$ is an
equivalence of $\mathbb{C}$-linear categories between $\cauchy$
and the category of finite-support families of finite-dimensional
vector spaces. Consequently $\cauchy$ is semisimple abelian, its
tensor product is exact in each variable, and its simple objects
are the atoms.
\end{theo}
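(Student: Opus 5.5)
The plan is to prove that every object of $\cauchy$ is a finite direct sum of atoms, and then read off the equivalence from Lemma~\ref{lemm:dichotomy}. By the discussion following Theorem~\ref{theo:radical}, each $Y\in\cauchy$ decomposes as $Y\cong\bigoplus_{i=1}^{n}S_{i}$ with each $S_{i}$ an atom, hence isomorphic to some chosen $S_{\gamma}$. Grouping the summands by isomorphism class gives $Y\cong\bigoplus_{\gamma}S_{\gamma}^{\oplus m_{\gamma}}$ with finitely many $m_{\gamma}\neq0$.

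Next I would compute the functor $\Phi\colon Y\mapsto(\Hom(S_{\gamma},Y))_{\gamma}$ on such sums. Lemma~\ref{lemm:dichotomy} gives $\Hom(S_{\gamma},S_{\delta})=0$ for $\gamma\ne\delta$, and $\End(S_{\gamma})=\mathbb{C}$. Hence $\Hom(S_{\gamma},Y)\cong\mathbb{C}^{m_{\gamma}}$, so $\Phi(Y)$ has finite support and finite-dimensional entries.

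\textbf{Full faithfulness.} For $Y=\bigoplus S_{\gamma}^{m_{\gamma}}$ and $Z=\bigoplus S_{\gamma}^{n_{\gamma}}$, biadditivity of $\Hom$ gives
\[
\Hom(Y,Z)\cong\bigoplus_{\gamma}\mathrm{Mat}_{n_{\gamma}\times m_{\gamma}}(\mathbb{C}),
\]
and $\Phi$ maps this isomorphically onto $\bigoplus_{\gamma}\Hom_{\mathbb{C}}(\mathbb{C}^{m_{\gamma}},\mathbb{C}^{n_{\gamma}})$. The only thing to check is that composition with the inclusions and projections of the direct sum matches matrix multiplication, which is routine.

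\textbf{Essential surjectivity.} A family $(V_{\gamma})$ is realised by $\bigoplus_{\gamma}S_{\gamma}^{\oplus\dim V_{\gamma}}$, which exists because $\cauchy$ is additive.

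The target category, finite-support families of finite-dimensional vector spaces, is semisimple abelian with simple objects the families that are $\mathbb{C}$ in a single slot. Since an equivalence of $\mathbb{C}$-linear categories transports all of this, $\cauchy$ is semisimple abelian and its simple objects are exactly the atoms.

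\textbf{Exactness of the tensor product.} In a semisimple abelian category every short exact sequence splits, and any additive functor preserves split exact sequences. The functor $-\otimes W$ is $\mathbb{C}$-linear, hence additive, so it is exact in each variable.

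I expect no serious obstacle. The delicate points are already settled earlier: idempotent splitting, which is why $\cauchy$ is a Karoubi envelope, and the dichotomy between atoms. The one subtlety worth stating explicitly is that the isomorphism classes of atoms form a set, so that the choice of representatives $\{S_{\gamma}\}$ makes sense. This holds because objects of $\cauchy$ are pairs $(Y,e)$ with $Y$ a tuple of natural numbers and $e$ an idempotent, and these form a set.
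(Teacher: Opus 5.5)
Your proposal is correct and follows essentially the same route as the paper. Both decompose every object into atoms via the discussion after Theorem~\ref{theo:radical}, get full faithfulness from Lemma~\ref{lemm:dichotomy} by writing morphisms as matrices over $\Hom(S_{\gamma},S_{\delta})=\delta_{\gamma\delta}\cdot\mathbb{C}$, realise families by $\bigoplus_{\gamma}S_{\gamma}^{\oplus\dim V_{\gamma}}$, transport abelianness along the equivalence, and deduce exactness of $\otimes$ from the splitting of short exact sequences. Your remark that isomorphism classes of atoms form a set is a harmless addition; the paper covers it in its essential-smallness check in Section~\ref{subsec:fibrefunctor}.
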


\begin{proof}
Every object is a finite direct sum of atoms, as shown above. By
Lemma~\ref{lemm:dichotomy}, for $Y\cong\bigoplus S_{\gamma(i)}$
and $Z\cong\bigoplus S_{\delta(j)}$ every morphism is a matrix
over the spaces
$\Hom(S_{\gamma},S_{\delta})=\delta_{\gamma\delta}\cdot\mathbb{C}$,
so the functor is fully faithful; a finite-support family
$(W_{\gamma})$ is the image of
$\bigoplus_{\gamma}S_{\gamma}^{\oplus\dim W_{\gamma}}$, so it is
essentially surjective. The target category is semisimple abelian,
and abelianness transports along the equivalence. Exactness of the
tensor product holds because in a semisimple abelian category
every short exact sequence splits and additive functors preserve
split exactness.
\end{proof}

\subsection{The super fibre functor}\label{subsec:fibrefunctor}

We may now verify the hypotheses of Deligne's theorem for
$\cauchy$:
\begin{itemize}
\item essentially small: the fragments form a set of
  representatives, and all Hom-spaces, idempotents, tuples and
  pairs built from them are set-sized;
\item abelian, $\mathbb{C}$-linear and semisimple, with
  finite-dimensional morphism spaces and with
  $\End(\unitobj)=\mathbb{C}$: Theorem~\ref{theo:semisimple} and
  Section~\ref{subsec:cauchy};
\item rigid and symmetric, with exact tensor product:
  Section~\ref{subsec:cauchy} and
  Theorem~\ref{theo:semisimple};
\item finitely tensor-generated by the self-dual object $\genX$:
  every object of $\cauchy$ is a direct summand of a finite
  direct sum of tensor powers of $\genX$; summands are subobjects
  in the semisimple abelian structure, and $\genX$ is self-dual,
  so this is tensor generation in Deligne's sense;
\item every object of moderate growth: in a semisimple category
  the length of $Y^{\otimes N}$ is at most
  $\dim\End(Y^{\otimes N})$ --- if
  $Y^{\otimes N}\cong\bigoplus_{i}S_{i}^{\oplus m_{i}}$ then the
  length is $\sum_{i}m_{i}\le\sum_{i}m_{i}^{2}=\dim\End$ --- and
  the latter is exponentially bounded by
  Lemma~\ref{lemm:growthD}.
\end{itemize}
In this list the nilpotent-trace theorem enters only through
Theorem~\ref{theo:semisimple}, and everything else follows from
the rank bound and the constructions of
Section~\ref{sec:conncat}.

\begin{theo}[{Deligne \cite[Thm~0.6]{Del02}; see also
\cite{Ost04}}]\label{theo:fibre}
There is an exact faithful $\mathbb{C}$-linear symmetric monoidal
functor $\fib\colon\cauchy\to\svect$.
\end{theo}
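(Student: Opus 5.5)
The statement is Deligne's theorem, so the plan is to verify, one by one, the hypotheses of \cite[Thm~0.6]{Del02} for $\cauchy$ and then invoke it. Deligne's theorem reads as follows. Let $\mathcal{A}$ be an essentially small rigid symmetric $\mathbb{C}$-linear abelian tensor category with $\End(\unitobj)=\mathbb{C}$, and suppose every object of $\mathcal{A}$ has moderate growth, meaning the length of $Y^{\otimes N}$ is bounded by $C^{N}$. Then $\mathcal{A}$ is super-Tannakian: there is a fibre functor to $\svect$, exact, faithful and symmetric monoidal. The itemised list preceding the statement records exactly these inputs, so the proof will mostly assemble them with precise references.

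First I will confirm that $\cauchy$ is abelian, semisimple and $\mathbb{C}$-linear. This is Theorem~\ref{theo:semisimple}, which rests on Theorem~\ref{theo:radical} and hence on the nilpotent-trace Theorem~\ref{theo:niltrace} together with the nondegeneracy in Lemma~\ref{lemm:nondegD}. The same theorem makes the tensor product exact in each variable. It is automatically bilinear, since it is induced by disjoint union and extended bilinearly in Section~\ref{subsec:cauchy}.

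Second, I will record the remaining structural hypotheses:
\begin{itemize}
\item rigidity and symmetry, which come from the self-duality of $\genX$ via strands and from the permutation fragments, extended to the Cauchy completion as in Section~\ref{subsec:cauchy};
\item the identity $\End(\unitobj)=\mathbb{C}$, which is Lemma~\ref{lemm:simpleunit} and is preserved by the fully faithful inclusion;
\item essential smallness.
\end{itemize}
One compatibility point needs checking: that the abstract abelian structure of Theorem~\ref{theo:semisimple} is compatible with the monoidal structure. In a semisimple category every additive functor is exact, so this is automatic. The same argument shows that the unit object is simple, because $\End(\unitobj)=\mathbb{C}$ makes $\unitobj$ an atom.

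Third, I will establish moderate growth, which I expect to be the only step needing an actual estimate. For $Y\in\cauchy$, Lemma~\ref{lemm:growthD} gives $\dim\End(Y^{\otimes N})\le A^{N}$ with $A=m^{2}R^{2n_{\max}}$. In the semisimple decomposition $Y^{\otimes N}\cong\bigoplus_{i}S_{i}^{\oplus m_{i}}$ we have $\dim\End(Y^{\otimes N})=\sum_{i}m_{i}^{2}$. The length is $\sum_{i}m_{i}\le\sum_{i}m_{i}^{2}$, so it is at most $A^{N}$, which is Deligne's growth condition. Deligne's theorem then provides the functor $\fib$.

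Faithfulness and exactness are part of Deligne's conclusion. In any case they are automatic for a nonzero tensor functor out of a semisimple rigid category with simple unit. Exactness holds because all sequences split. Faithfulness follows because any nonzero simple $S$ has $\mathrm{ev}\colon S^{\vee}\otimes S\to\unitobj$ surjective, so $\fib(S)\neq0$. The only delicate point is matching the formulations of the hypotheses to \cite{Del02}, where the theorem is stated for tensor categories over an algebraically closed field of characteristic zero. Finite tensor generation is not needed there, but it is recorded in the list in case one prefers the version in \cite{Ost04}.
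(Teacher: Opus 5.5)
Your proposal is correct and is essentially the paper's own argument. The paper likewise checks Deligne's hypotheses in the itemised list just before the statement and then cites \cite[Thm~0.6]{Del02}: semisimplicity and exactness of $\otimes$ come from Theorem~\ref{theo:semisimple}, rigidity, symmetry and $\End(\unitobj)=\mathbb{C}$ from Section~\ref{subsec:cauchy}, essential smallness is immediate, and moderate growth comes from the length bound $\sum_i m_i\le\sum_i m_i^2=\dim\End(Y^{\otimes N})$ together with Lemma~\ref{lemm:growthD}. Your extra remarks, that exactness and faithfulness are automatic in the semisimple setting and that finite tensor generation is not needed for Deligne's version, are harmless additions to that same route.
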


Deligne's theorem concludes more than is stated here: a tensor
equivalence with a category $\operatorname{Rep}(G,\varepsilon)$
of super representations of an affine supergroup scheme
\cite[0.3]{Del02}, of which the functor above is the composite
with the forgetful functor. Of the functor's properties,
Sections~\ref{sec:extraction} and~\ref{sec:circuit} use that it
is symmetric monoidal and $\mathbb{C}$-linear, the dimension
bound below uses its linearity, and
Section~\ref{sec:consequences} uses its faithfulness, to identify
the symmetric-group blocks surviving in $\cauchy$ with those
acting on the model.

Here the morphisms of $\svect$ are the parity-preserving linear
maps; we use this convention repeatedly. We fix such an $\fib$ once
and for all, suppress its structural isomorphisms
$\fib(Y)\otimes\fib(Z)\cong\fib(Y\otimes Z)$ from the notation
throughout, and write $V\coloneqq\fib(\genX)$, a super vector
space
$V=V_{0}\oplus V_{1}$ with $\dim V_{0}=k$ and, as we shall see in
Section~\ref{sec:extraction}, $\dim V_{1}=2\ell$ even.

The growth hypothesis also bounds the dimension of $V$; this is
the source of the quantitative clause of the Main Theorem. Here
$\Phi_{n}$ denotes the symmetric-braiding action of
$\mathbb{C}[S_{n}]$ on the $n$-th tensor power of an object,
$e_{\lambda}$ the central idempotent of the block of
$\lambda\vdash n$, and $d_{\lambda}$ the dimension of the
irreducible $S_{n}$-module $U_{\lambda}$; as in
Appendix~\ref{app:zeta}, we say that $\lambda$ \emph{survives}
for an object $Z$ if $\Phi_{n}(e_{\lambda})\ne0$ in
$\End(Z^{\otimes n})$, and that it \emph{dies} otherwise. We write
$\Phi^{V}_{n}\coloneqq\fib\circ\Phi_{n}$ for the resulting action
of $\mathbb{C}[S_{n}]$ on $V^{\otimes n}$; since $\fib$ is
symmetric monoidal, this is the super-permutation action, in which
a permutation rearranges the tensor factors with the Koszul sign of
the rearrangement.

\begin{lemm}[Block faithfulness]\label{lemm:blockimport}
The kernel of $\Phi_{n}$ is the sum of the blocks of the
partitions of $n$ that die for $\genX$; consequently
\[
\sum_{\lambda\text{ survives}}d_{\lambda}^{2}
\le\dim\End(\genX^{\otimes n})=\rk\connmat{f}{2n}.
\]
\end{lemm}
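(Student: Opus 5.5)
The plan is to use the Wedderburn decomposition of the group algebra. Write $\mathbb{C}[S_{n}]=\bigoplus_{\lambda\vdash n}\mathbb{C}[S_{n}]e_{\lambda}$, where each block $\mathbb{C}[S_{n}]e_{\lambda}\cong\End(U_{\lambda})$ is a simple algebra of dimension $d_{\lambda}^{2}$. The map
\[
\Phi_{n}\colon\mathbb{C}[S_{n}]\to\End(\genX^{\otimes n})
\]
is a unital algebra homomorphism, since the permutation fragments compose as permutations do (Lemma~\ref{lemm:gluingfacts}(iv)). Its kernel is therefore a two-sided ideal of $\mathbb{C}[S_{n}]$.

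The first key step is the standard fact that every two-sided ideal of a finite direct product of simple algebras is a sum of some of the factors. I would prove this directly. Multiplying an ideal $K$ by the central idempotents gives $K=\bigoplus_{\lambda}Ke_{\lambda}$. Each $Ke_{\lambda}$ is an ideal of the simple algebra $\mathbb{C}[S_{n}]e_{\lambda}$, so it is either $0$ or the whole block.

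The second step identifies which blocks lie in the kernel. The block of $\lambda$ lies in $\ker\Phi_{n}$ if and only if $e_{\lambda}\in\ker\Phi_{n}$, because $e_{\lambda}$ generates its block as an ideal and, conversely, lies in it. By definition, $e_{\lambda}\in\ker\Phi_{n}$ means exactly that $\lambda$ dies for $\genX$. This gives the first assertion:
\[
\ker\Phi_{n}=\bigoplus_{\lambda\text{ dies}}\mathbb{C}[S_{n}]e_{\lambda}.
\]

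For the inequality, I would first pass to the image. The image of $\Phi_{n}$ is isomorphic to $\mathbb{C}[S_{n}]/\ker\Phi_{n}$, hence to the sum of the surviving blocks. Its dimension is therefore $\sum_{\lambda\text{ survives}}d_{\lambda}^{2}$, and it is a subspace of $\End(\genX^{\otimes n})$.

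The final equality $\dim\End(\genX^{\otimes n})=\rk\connmat{f}{2n}$ comes from the construction. By definition $\End(\genX^{\otimes n})=\mathbb{C}^{(\Frag{2n})}/N_{2n}$, whose dimension was shown in Section~\ref{subsec:categorydef} to be the rank of the connection matrix. This step uses only $\conncat$ itself, which embeds fully faithfully in $\cauchy$, so the endomorphism space is the same whichever category is used.

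I expect no real obstacle here. The only point needing care is the bookkeeping for the ideal decomposition: it requires checking that the kernel is a two-sided ideal, which in turn uses that $\Phi_{n}$ is multiplicative and not merely linear. Multiplicativity holds by Lemma~\ref{lemm:gluingfacts}(iv).
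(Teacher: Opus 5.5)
Your proposal is correct and follows the paper's argument: the kernel of the algebra homomorphism $\Phi_{n}$ is a two-sided ideal of $\mathbb{C}[S_{n}]$, hence a sum of blocks, and $B_{\lambda}$ lies in it iff $\Phi_{n}(e_{\lambda})=0$; so the surviving blocks inject into $\End(\genX^{\otimes n})$, whose dimension is $\rk\connmat{f}{2n}$ by Section~\ref{subsec:categorydef}. The only difference is presentational: you prove the ideal decomposition $K=\bigoplus_{\lambda}Ke_{\lambda}$ explicitly and phrase the injectivity through the image $\mathbb{C}[S_{n}]/\ker\Phi_{n}$, where the paper treats these steps as standard.
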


\begin{proof}
The kernel of the algebra homomorphism
$\Phi_{n}\colon\mathbb{C}[S_{n}]\to\End(\genX^{\otimes n})$ is a
two-sided ideal of the semisimple algebra
$\mathbb{C}[S_{n}]=\bigoplus_{\lambda\vdash n}B_{\lambda}$, where
$B_{\lambda}\cong\mathrm{Mat}_{d_{\lambda}}(\mathbb{C})$ is the
block of $\lambda$; hence the kernel is a sum of blocks, and
$B_{\lambda}$ lies in it iff $\Phi_{n}(e_{\lambda})=0$, since
$x=e_{\lambda}x$ for $x\in B_{\lambda}$. So $\Phi_{n}$ is
injective on the sum of the surviving blocks, whose dimension is
the left-hand side, and $\dim\End(\genX^{\otimes n})
=\rk\connmat{f}{2n}$ by Section~\ref{subsec:categorydef}.
\end{proof}

For $\lambda\vdash n$ let
$\schur{\lambda}{V}\coloneqq\Hom_{S_{n}}(U_{\lambda},V^{\otimes n})$
be the multiplicity space of $U_{\lambda}$ in $V^{\otimes n}$ under
$\Phi^{V}_{n}$, where $\Hom_{S_{n}}$ denotes the space of all
intertwining linear maps, with its natural super grading, so that
its dimension is the total dimension of its even and odd parts;
the modules $U_{\lambda}$ being self-dual, this is the super Schur
functor of Deligne \cite[\S1]{Del02} applied to $V$. Then
\begin{equation}\label{eq:isotypic}
V^{\otimes n}\cong\bigoplus_{\lambda\vdash n}
U_{\lambda}\otimes\schur{\lambda}{V},
\end{equation}
and $\Phi^{V}_{n}(e_{\lambda})$ is the projection onto the
$\lambda$-th summand; in particular $\schur{\lambda}{V}\ne0$ iff
$\Phi^{V}_{n}(e_{\lambda})\ne0$. Since $\fib$ is linear, if
$\Phi_{n}(e_{\lambda})=0$ then
$\Phi^{V}_{n}(e_{\lambda})=\fib(\Phi_{n}(e_{\lambda}))=0$; so if
$\schur{\lambda}{V}\ne0$ then $\lambda$ survives for $\genX$. The
converse, which needs faithfulness, is not used until
Section~\ref{sec:consequences}.

\begin{lemm}[The dimension inequality]\label{lemm:diminequality}
Put $d\coloneqq\dim V=\dim V_{0}+\dim V_{1}$ and suppose $d\ge1$.
Then for every $n\ge1$,
\[
d^{2n}\le\rk\connmat{f}{2n}\cdot\binom{n+d^{2}-1}{d^{2}-1}.
\]
\end{lemm}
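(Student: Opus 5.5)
We start from the isotypic decomposition \eqref{eq:isotypic}, which gives
\[
d^{n}=\dim V^{\otimes n}=\sum_{\lambda\vdash n}d_{\lambda}\,\dim\schur{\lambda}{V},
\]
where only the partitions with $\schur{\lambda}{V}\ne0$ contribute. As noted just before the statement, every such $\lambda$ survives for $\genX$, because $\fib$ is linear. We then apply Cauchy--Schwarz to the sum over these contributing partitions:
\[
d^{2n}\le\Bigl(\sum_{\schur{\lambda}{V}\ne0}d_{\lambda}^{2}\Bigr)\Bigl(\sum_{\lambda\vdash n}(\dim\schur{\lambda}{V})^{2}\Bigr).
\]
Since the first sum runs over surviving $\lambda$ only, Lemma~\ref{lemm:blockimport} bounds it by $\rk\connmat{f}{2n}$.

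It then suffices to bound the second factor by $\binom{n+d^{2}-1}{d^{2}-1}$, which is the dimension of $\SymV^{n}(\mathbb{C}^{d^{2}})$. By \eqref{eq:isotypic}, the commutant of the $S_{n}$-action on $V^{\otimes n}$ (all linear maps, ignoring parity) is $\bigoplus_{\lambda}\End(\schur{\lambda}{V})$. Its dimension is therefore exactly $\sum_{\lambda}(\dim\schur{\lambda}{V})^{2}$. Now identify $\End(V^{\otimes n})$ with $\End(V)^{\otimes n}$ as super vector spaces. Conjugation by a permutation acting with Koszul signs on $V^{\otimes n}$ is then the super-permutation action on $\End(V)^{\otimes n}$, up to the signs the identification introduces. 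So the commutant is the space of $S_{n}$-invariants of $W^{\otimes n}$ for the super space $W=\End(V)$, whose total dimension is $d^{2}$.

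The main step is to bound $\dim(W^{\otimes n})^{S_{n}}$, the invariants under the signed action, by $\binom{n+d^{2}-1}{d^{2}-1}$. We would do this with the monomial basis of $W^{\otimes n}$ built from a homogeneous basis of $W$. The signed permutation action permutes these monomials up to sign, so the invariants are spanned by orbit sums over multisets of basis indices. Some orbit sums vanish because odd indices repeat, and this only lowers the count. The invariant space therefore has dimension at most the number of multisets of size $n$ from $d^{2}$ elements, which is the binomial coefficient. This uses only that the action permutes a basis up to sign, so the exact sign conventions in identifying $\End(V^{\otimes n})$ with $\End(V)^{\otimes n}$ do not matter.

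The step needing most care is that identification, together with the claim that the commutant corresponds to invariants of some signed permutation action on a basis. To avoid the identification entirely, we would argue directly. A linear map $A$ on $V^{\otimes n}$ commutes with every $\Phi^{V}_{n}(\pi)$ exactly when its matrix entries in the monomial basis satisfy $A_{\pi\cdot I,\pi\cdot J}=\pm A_{I,J}$. An invariant $A$ is then determined by its values on representatives of the $S_{n}$-orbits of pairs $(I,J)$ of index words. These orbits are in bijection with multisets of pairs of indices, and there are $\binom{n+d^{2}-1}{d^{2}-1}$ of them. Combining this bound with the Cauchy--Schwarz inequality gives the claim.
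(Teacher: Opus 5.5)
Your proposal is correct and follows the paper's proof step for step. The paper takes dimensions in \eqref{eq:isotypic}, applies Cauchy--Schwarz with the first factor bounded by Lemma~\ref{lemm:blockimport} and the second equal to the commutant dimension $\sum_{\lambda}(\dim\schur{\lambda}{V})^{2}$, and then bounds the commutant by your direct matrix-entry count of $S_{n}$-orbits on pairs of words via their $d^{2}$ letter-pair counts, so the detour through $\End(V)^{\otimes n}$ can simply be dropped.
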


\begin{proof}
Fix $n\ge1$ and write $m_{\lambda}\coloneqq\dim\schur{\lambda}{V}$.
Taking dimensions in \eqref{eq:isotypic}, with the grading
forgotten,
\[
d^{n}=\sum_{\lambda\vdash n}d_{\lambda}m_{\lambda}.
\]
Every $\lambda$ with $m_{\lambda}\ne0$ survives for $\genX$, as
just noted, so Lemma~\ref{lemm:blockimport} gives
$\sum_{m_{\lambda}\ne0}d_{\lambda}^{2}\le\rk\connmat{f}{2n}$;
and by Schur's lemma
$\sum_{\lambda}m_{\lambda}^{2}=\dim\End_{S_{n}}(V^{\otimes n})$,
where $\End_{S_{n}}(V^{\otimes n})$ consists of all linear
endomorphisms of $V^{\otimes n}$ commuting with the action,
parity-reversing ones included. The Cauchy--Schwarz inequality
therefore gives
\[
d^{2n}=\Bigl(\sum_{m_{\lambda}\ne0}d_{\lambda}m_{\lambda}\Bigr)^{2}
\le\Bigl(\sum_{m_{\lambda}\ne0}d_{\lambda}^{2}\Bigr)
\Bigl(\sum_{\lambda}m_{\lambda}^{2}\Bigr)
\le\rk\connmat{f}{2n}\cdot\dim\End_{S_{n}}(V^{\otimes n}),
\]
and it remains to bound the commutant by counting orbits. Choose
a homogeneous basis $v_{1},\dotsc,v_{d}$ of $V$; the tensors
$v_{w}\coloneqq v_{w(1)}\otimes\dotsb\otimes v_{w(n)}$, indexed by
the words $w\in[d]^{n}$, form a basis of $V^{\otimes n}$ on which
the action is monomial: $\Phi^{V}_{n}(\sigma)$ sends $v_{w}$ to
$\pm v_{w\circ\sigma^{-1}}$. Let $T\in\End_{S_{n}}(V^{\otimes n})$
have matrix $(T_{w,w'})$ in this basis. Comparing matrix entries
in $T\Phi^{V}_{n}(\sigma)=\Phi^{V}_{n}(\sigma)T$ equates
$T_{w\circ\sigma,w'\circ\sigma}$ with $\pm T_{w,w'}$ for every
$\sigma\in S_{n}$, so $T$ is determined by its entries at one
representative of each orbit of $S_{n}$ acting on pairs of words
by simultaneous permutation of positions. Two such pairs lie in
the same orbit iff for every pair of letters $(a,b)\in[d]^{2}$
they have the same number of positions $i$ carrying $(a,b)$,
i.e.\ with $(w(i),w'(i))=(a,b)$: a simultaneous permutation of
positions preserves the counts, and when the counts agree, a
permutation matching the positions letter pair by letter pair
carries one pair of words to the other. An orbit is thus
determined by its $d^{2}$ letter-pair counts, nonnegative
integers summing to $n$, and there are
$\binom{n+d^{2}-1}{d^{2}-1}$ such tuples; hence
$\dim\End_{S_{n}}(V^{\otimes n})\le\binom{n+d^{2}-1}{d^{2}-1}$.
\end{proof}

Read as a lower bound,
$\rk\connmat{f}{2n}\ge d^{2n}\big/\tbinom{n+d^{2}-1}{d^{2}-1}$
for every single $n$; the exponential against the polynomial is
what the corollary exploits.

\begin{coro}\label{coro:dimbound}
The dimensions of $V$ satisfy $\dim V_{0}+\dim V_{1}\le R$.
\end{coro}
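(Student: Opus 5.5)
The plan is to combine Lemma~\ref{lemm:diminequality} with (H2) at arity $2n$ and let $n\to\infty$. If $d\coloneqq\dim V_{0}+\dim V_{1}=0$ there is nothing to prove, since $R\ge1$. So suppose $d\ge1$. For every $n\ge1$, Lemma~\ref{lemm:diminequality} and the bound $\rk\connmat{f}{2n}\le R^{2n}$ give
\[
d^{2n}\le R^{2n}\binom{n+d^{2}-1}{d^{2}-1}.
\]

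Next we take $2n$-th roots. The binomial coefficient is a polynomial in $n$ of degree $d^{2}-1$; crudely, it is at most $(n+1)^{d^{2}-1}$. Its $2n$-th root therefore tends to $1$ as $n\to\infty$, which yields
\[
d\le R\cdot\bigl((n+1)^{d^{2}-1}\bigr)^{1/2n}\longrightarrow R .
\]
Hence $d\le R$. Here $d$ is fixed, because $V=\fib(\genX)$ does not depend on $n$, so the degree of the polynomial factor is fixed.

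The one point requiring care is exactly this: the polynomial factor must be independent of $n$ in degree. Otherwise the argument is a routine limit. There is no real obstacle, since the substantive work (block faithfulness and the orbit count) is already done in Lemmas~\ref{lemm:blockimport} and~\ref{lemm:diminequality}.

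Note also that the bound is not weakened by the fact that the hypothesis is only used at even arities. The inequality $d^{2n}\lesssim R^{2n}$ already carries the exponent $2n$ on both sides.
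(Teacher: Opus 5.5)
Your proposal is correct and follows the paper's proof essentially verbatim: apply Lemma~\ref{lemm:diminequality} with the (H2) bound $\rk\connmat{f}{2n}\le R^{2n}$, bound the binomial coefficient by $(n+1)^{d^{2}-1}$, and let $n\to\infty$ so that its $2n$-th root tends to $1$. The case $d=0$ is dismissed exactly as in the paper.
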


\begin{proof}
Let $d\coloneqq\dim V$; if $d=0$ there is nothing to prove.
Otherwise (H2) gives $\rk\connmat{f}{2n}\le R^{2n}$, so
Lemma~\ref{lemm:diminequality} yields
$d\le R\bigl(\tbinom{n+d^{2}-1}{d^{2}-1}\bigr)^{1/2n}$ for every
$n\ge1$; since $\tbinom{n+d^{2}-1}{d^{2}-1}\le(n+1)^{d^{2}-1}$, the
root tends
to $1$ as $n\to\infty$, and $d\le R$.
\end{proof}

\section{From categorical tensors to a super tensor
network}\label{sec:extraction}

In this section we extract from the fibre functor the raw data
of a mixed model --- a bilinear form, a copairing, and one vertex
tensor per degree --- and express $f$ as the value of a super tensor
network built from them. The data map is
\[
\mathrm{ev}\mapsto b,\qquad
\mathrm{coev}\mapsto\copair,\qquad
\sigma_{d}\mapsto T_{d}\rightsquigarrow h^{\mathrm{cat}}_{d},\qquad
(\copair,h^{\mathrm{cat}})\rightsquigarrow
(\twistedC,h^{\mathrm{RS}})
\rightsquigarrow\text{Definition~\ref{defi:mpf}},
\]
the identification with Definition~\ref{defi:mpf} being the business of
Section~\ref{sec:circuit}. The objects introduced along the way,
and their relations, are collected in the following table.

\begin{center}
\begin{tabular}{@{}lp{8.8cm}l@{}}
$b=\fib(\mathrm{ev})$ & the form: even, supersymmetric,
  nondegenerate & \S\ref{subsec:formcoords}\\
$\copair=\fib(\mathrm{coev})$ & the copairing:
  \mbox{$L_{\copair}=\mathrm{id}_{V}$},
  \mbox{$b(\copair)=k-2\ell=f(\freecircle)$} & \S\ref{subsec:copairing}\\
$\twistedC=(\parityop\otimes\mathrm{id})\copair$ & the twisted element
  of Definition~\ref{defi:mpf}: \mbox{$L_{\twistedC}=\parityop$},
  \mbox{$b(\twistedC)=k+2\ell$} & \S\ref{subsec:copairing}\\
$T_{d}=\fib(\sigma_{d})$ & the vertex tensors: even and
  $S_{d}$-invariant & \S\ref{subsec:mates}\\
$h^{\mathrm{cat}}$ & the functional matching $\copair$:
  $\beta_{d}(T_{d},-)=h^{\mathrm{cat}}_{d}\circ\symq{d}$ &
  \S\ref{subsec:mates}\\
$\parinv$ & the scalar $(-1)^{j(j-1)/2}$ on exterior degree $j$ &
  \S\ref{subsec:paritytransfer}\\
$h^{\mathrm{RS}}=h^{\mathrm{cat}}\circ\parinv$ & the functional
  matching $\twistedC$, viz.\ that of Definition~\ref{defi:mpf} &
  \S\ref{subsec:paritytransfer}
\end{tabular}
\end{center}

\subsection{The form and its coordinates}\label{subsec:formcoords}

Write $b\coloneqq\fib(\mathrm{ev})\colon V\otimes V\to\mathbb{C}$.
The form $b$ is even, because the morphisms of $\svect$ are
parity-preserving;
supersymmetric, since $\mathrm{ev}\circ c=\mathrm{ev}$ holds
already in $\conncat$ (gluing the swap into a strand yields a
strand); and nondegenerate, since the snake identities make
$x\mapsto b(x,-)$ invertible with inverse built from
$\fib(\mathrm{coev})$. The restriction of $b$ to $V_{1}$ is then a
nondegenerate antisymmetric form, so $\dim V_{1}$ is even; we
write $\dim V_{1}=2\ell$, and Corollary~\ref{coro:dimbound} reads
$k+2\ell\le R$. Choose
coordinates
adapted to $b$: a basis $e_{1},\dotsc,e_{k}$ of $V_{0}$ with
$b(e_{i},e_{j})=\delta_{ij}$ and $b(e_{i},V_{1})=0$, and a basis
$\xi_{1},\dotsc,\xi_{2\ell}$ of $V_{1}$ with
\[
b(\xi_{m},\xi_{m+\ell})=1=-b(\xi_{m+\ell},\xi_{m})
\quad(m\le\ell),
\qquad
b(\xi_{i},\xi_{j})=0\ \text{otherwise}.
\]
With $\eta_{i}$ as in Section~\ref{subsec:mpf}, this gives
\begin{equation}\label{eq:etapairing}
b(\eta_{i},\xi_{j})=\delta_{ij}
\qquad\text{and}\qquad
b(\xi_{i},\eta_{i})=-1
\qquad\text{for all }i,j\text{:}
\end{equation}
for $i\le\ell$,
$b(\eta_{i},\xi_{j})=-b(\xi_{i+\ell},\xi_{j})=-(-\delta_{ij})
=\delta_{ij}$, and for $i>\ell$,
$b(\eta_{i},\xi_{j})=b(\xi_{i-\ell},\xi_{j})=\delta_{ij}$, using
$b(\xi_{m},\xi_{m+\ell})=1$; the second identity follows by
supersymmetry.

For homogeneous $v_{i},w_{i}\in V$ define the
\emph{tensor-power pairing}
\[
\beta_{d}(v_{1}\otimes\dotsb\otimes v_{d},\,
w_{1}\otimes\dotsb\otimes w_{d})
\coloneqq
(-1)^{\sum_{i<j}|v_{j}||w_{i}|}\prod_{i=1}^{d}b(v_{i},w_{i}).
\]

\begin{lemm}\label{lemm:koszul}
The Koszul sign of a rearrangement of a word of homogeneous
tensor factors in $\svect$ equals the sign of the permutation it
induces on the odd letters; in particular the signs multiply
under successive rearrangements, the parity word being
transported after each, and signed evaluations along any two
rearrangement paths realising the same permutation of the
labelled homogeneous tensor factors are equal. Consequently:
\begin{enumerate}
\item[(a)] $\beta_{d}$ is the signed evaluation obtained by
  rearranging $(v_{1}\dotsm v_{d}\,w_{1}\dotsm w_{d})$ into the
  interleaved word $(v_{1}w_{1}\,v_{2}w_{2}\dotsm v_{d}w_{d})$
  and applying $b$ to each adjacent pair;
\item[(b)] $\beta_{d}(\pi v,\pi w)=\beta_{d}(v,w)$ for the super
  $S_{d}$-action applied to both arguments.
\end{enumerate}
\end{lemm}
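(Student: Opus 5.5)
The plan is to reduce everything to adjacent transpositions, where the symmetry of $\svect$ acts by definition. The braiding $c_{x,y}(x\otimes y)=(-1)^{|x||y|}y\otimes x$ on homogeneous vectors is the only source of signs. A rearrangement of a word of homogeneous factors is a composite of adjacent swaps. Each swap contributes $(-1)^{|x||y|}$, which is $-1$ exactly when both swapped letters are odd, i.e.\ exactly when the swap transposes two odd letters in their relative order. By induction on the number of swaps, the total sign is $(-1)^{\mathrm{inv}}$, where $\mathrm{inv}$ is the number of inversions among the odd letters of the induced permutation of labelled factors. That number is the sign of the permutation restricted to the odd letters. The sign therefore depends only on the induced permutation of labelled factors, not on the path.

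Multiplicativity under successive rearrangements follows at once. The parity word is transported after each step, and the odd-letter restriction of a composite permutation is the composite of the restrictions, so the sign of the composite is the product of the signs. This is also the coherence statement for symmetric monoidal categories, and I would cite it as a cross-check: any two braiding composites realising the same permutation agree. Path independence of the signed evaluations then holds because the $b$'s are applied only after the rearrangement.

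For (a), I would count the inversions in moving $(v_1\dotsm v_d\,w_1\dotsm w_d)$ to the interleaved word $(v_1w_1\dotsm v_dw_d)$. Each $w_i$ must pass leftward over exactly the $v_j$ with $j>i$. The $v$'s keep their relative order among themselves, and so do the $w$'s. So the odd inversions are the pairs $i<j$ with $v_j$ and $w_i$ both odd, and the sign is $(-1)^{\sum_{i<j}|v_j||w_i|}$. Applying $b$ to adjacent pairs then gives exactly the defining formula of $\beta_d$. Since $b$ is even, only pairs with $|v_i|=|w_i|$ contribute, but no such reduction is needed here.

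For (b), I would write $\beta_d(\pi v,\pi w)$ via (a) as a signed evaluation along the following path. First undo $\pi$ on each block; this contributes the super-action signs of $\pi$ twice, once as given and once in reverse. Then interleave, and finally apply $b$ pairwise. An alternative path is to interleave first and then permute the adjacent pairs $(v_iw_i)$ as blocks. Each block has even total parity whenever the pair contributes nonzero to the $b$-product, since $b$ is even. Permuting even blocks contributes no sign, and the multiset of $b$-factors is unchanged. Both paths realise the same permutation of labelled factors, so by the path-independence just proved they give equal values, which yields $\beta_d(\pi v,\pi w)=\beta_d(v,w)$.

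The main obstacle is bookkeeping in (b): checking that the composite permutations agree and that the terms with mismatched parities in some pair vanish on both sides. I would handle the latter by noting that both sides are zero unless $|v_i|=|w_i|$ for all $i$.
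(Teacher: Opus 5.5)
Your proposal is correct and follows the paper's proof: signs come only from transpositions of two odd letters, the interleaving in (a) has Koszul sign $(-1)^{\sum_{i<j}|v_j||w_i|}$, and in (b) the interleaved blocks $(v_iw_i)$ are even on the support and permute without sign. The differences are minor. You derive path independence directly from the inversion count, where the paper cites coherence of the symmetric structure, and you spell out the two-path comparison that the paper's one-line treatment of (b) leaves implicit. Your remark that (a) needs no restriction to the support of the $b$-product is also correct.
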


\begin{proof}
The symmetry of $\svect$ on homogeneous vectors is the flip times
$(-1)^{|x||y|}$, so a composite of symmetries multiplies the
scalars $(-1)$ once for each transposition of two odd letters;
this is precisely the sign of the induced permutation of the odd
letters, and induced permutations compose along successive
rearrangements, so the signs multiply; any two decompositions
realising the same permutation of the labelled homogeneous
tensor factors agree by coherence of the symmetric structure. For (a), moving each $w_{i}$ leftward past
$v_{i+1},\dotsc,v_{d}$ costs exactly
$(-1)^{\sum_{i<j}|v_{j}||w_{i}|}$ on the support of the product of
the $b(v_{i},w_{i})$. For (b), each factor $b(v_{i},w_{i})$
forces $|v_{i}|=|w_{i}|$ on the support, so every interleaved
block $(v_{i}w_{i})$ has even total parity; acting by $\pi$ on
both arguments permutes these even blocks, which costs no sign,
and merely reindexes the scalar factors. Terms outside the
support vanish on both sides.
\end{proof}

\subsection{The copairing and the twisted
element}\label{subsec:copairing}

Write $\copair\coloneqq\fib(\mathrm{coev})\in V\otimes V$, and for
$D\in V\otimes V$ define $L_{D}\colon V\to V$ by
$L_{D}(x)\coloneqq(\mathrm{id}\otimes b)(D\otimes x)$, with no
reordering, so that $L_{D}(x)=\sum_{i}u_{i}\,b(w_{i},x)$ for
$D=\sum_{i}u_{i}\otimes w_{i}$. Applying $\fib$ to the snake
identity, which holds strictly in $\conncat$ by strand
unification, gives $L_{\copair}=\mathrm{id}_{V}$; and since $b$ is
nondegenerate, $L_{D}=L_{D'}$ forces $D=D'$, so this property
determines $\copair$ uniquely. In the coordinates above,
\[
\copair=\copair_{0}+\copair_{1},
\qquad
\copair_{0}=\sum_{i=1}^{k}e_{i}\otimes e_{i},
\qquad
\copair_{1}=\sum_{i=1}^{2\ell}\xi_{i}\otimes\eta_{i}:
\]
indeed $L_{\copair_{0}}(e_{j})=e_{j}$ directly,
$L_{\copair_{1}}(\xi_{j})=\sum_{i}\xi_{i}\,b(\eta_{i},\xi_{j})
=\xi_{j}$ by \eqref{eq:etapairing}, and the cross-parity terms
vanish because $b$ is even, so
$L_{\copair_{0}+\copair_{1}}=\mathrm{id}_{V}$. Unwinding the
definition of the $\eta_{i}$ gives the alternative form
\begin{equation}\label{eq:oddcopair}
\sum_{i}\eta_{i}\otimes\xi_{i}
=\sum_{m\le\ell}\bigl(-\xi_{m+\ell}\otimes\xi_{m}
+\xi_{m}\otimes\xi_{m+\ell}\bigr)
=-\copair_{1}.
\end{equation}

Let $\parityop\colon V\to V$ denote the parity operator, and
define the \emph{twisted element}
\[
\twistedC\coloneqq(\parityop\otimes\mathrm{id})\copair
=\copair_{0}-\copair_{1},
\qquad\text{so that}\qquad
L_{\twistedC}=\parityop ,
\]
and, by \eqref{eq:oddcopair}, its odd part is
\[
\twistedC_{1}=-\sum_{i}\xi_{i}\otimes\eta_{i}
=\sum_{i}\eta_{i}\otimes\xi_{i}.
\]
When $\ell>0$ the twisted element is not a copairing:
\[
b(\copair)=k-2\ell=\operatorname{sdim}V,
\qquad
b(\twistedC)=k+2\ell=\str(\parityop).
\]
Free circles are accordingly carried by multiplicativity and by
$f(\freecircle)=\tr(\mathrm{id}_{\genX})
=\str(\mathrm{id}_{V})=k-2\ell$, the middle equality because a
symmetric monoidal functor takes categorical traces to
supertraces, and they are removed before the parity transfer of
Lemma~\ref{lemm:paritytransfer}.

Both elements are supersymmetric: $c_{V,V}(\copair)=\copair$ and
$c_{V,V}(\twistedC)=\twistedC$, since on the odd part
$c(\xi_{i}\otimes\eta_{i})=-\eta_{i}\otimes\xi_{i}$ while
$\sum_{i}\eta_{i}\otimes\xi_{i}=-\sum_{i}\xi_{i}\otimes\eta_{i}$
by \eqref{eq:oddcopair}, and the two signs cancel. This
is what licenses assigning the two tensor legs of $\copair$, and
likewise of $\twistedC$, to the two half-edges of an edge without
choosing an order.

One further identity involving $\copair$ deserves a name of its
own: it is the mechanism by which inserted copairings will
fuse back into a single one in Lemma~\ref{lemm:indexraising}.

\begin{lemm}[Copairing fusion]\label{lemm:fusion}
Write $\copair=\sum_{i}u_{i}\otimes w_{i}$ with homogeneous terms.
Then
\[
\sum_{i,j}(-1)^{|w_{i}||u_{j}|}\,b(w_{i},w_{j})\;
u_{i}\otimes u_{j}=\copair .
\]
\end{lemm}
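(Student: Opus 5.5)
The plan is to prove the identity categorically: exhibit it as $\fib$ applied to an equality of fragments in $\conncat$, so that the only sign bookkeeping is the Koszul sign of one symmetry. Consider the morphism
\[
\Psi\coloneqq(\mathrm{id}_{\genX}\otimes\mathrm{id}_{\genX}\otimes\mathrm{ev})\circ(\mathrm{id}_{\genX}\otimes c_{\genX,\genX}\otimes\mathrm{id}_{\genX})\circ(\mathrm{coev}\otimes\mathrm{coev})\in\Hom(\unitobj,\genX^{\otimes2}).
\]
On fragments it is the disjoint union of two strands with ends $1,2$ and $3,4$, reordered to $1,3,2,4$, after which the last two ends ($2$ and $4$) are glued together. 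By Lemma~\ref{lemm:gluingfacts}(i),(ii),(iv) the two strands unify into a single strand joining ends $1$ and $3$, which are now the two outputs. So $\Psi=\mathrm{coev}$ already in $\conncat$; this is a literal strand unification, just like the snake identities.

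Next, apply the symmetric monoidal functor $\fib$, suppressing its structure isomorphisms as agreed. On $\copair\otimes\copair=\sum_{i,j}u_{i}\otimes w_{i}\otimes u_{j}\otimes w_{j}$, the middle symmetry of $\svect$ sends $w_{i}\otimes u_{j}$ to $(-1)^{|w_{i}||u_{j}|}u_{j}\otimes w_{i}$. Applying $b$ to the last two factors then produces exactly
\[
\sum_{i,j}(-1)^{|w_{i}||u_{j}|}\,b(w_{i},w_{j})\,u_{i}\otimes u_{j},
\]
while $\fib(\mathrm{coev})=\copair$. This proves the lemma. The only point needing care is that no further sign arises when $\mathrm{id}\otimes\mathrm{id}\otimes b$ is applied: $b$ acts on the rightmost factors, so nothing is moved past anything.

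As a sanity check, and a fallback if one prefers coordinates, the same identity can be verified directly from the explicit forms in Section~\ref{subsec:copairing}, taking $u_{i}\otimes w_{i}$ to run over $e_{i}\otimes e_{i}$ and $\xi_{i}\otimes\eta_{i}$. Cross-parity terms vanish because $b$ is even. The even part gives $\sum_{i,j}\delta_{ij}\,e_{i}\otimes e_{j}=\copair_{0}$. The odd part gives $-\sum_{i,j}b(\eta_{i},\eta_{j})\,\xi_{i}\otimes\xi_{j}$. Computing $b(\eta_{i},\eta_{j})$ from the definition of $\eta$ gives $b(\eta_{m},\eta_{m+\ell})=1$ for $m\le\ell$, the negative for the reversed pair, and $0$ otherwise. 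The odd part is therefore
\[
-\sum_{m\le\ell}\bigl(\xi_{m}\otimes\xi_{m+\ell}-\xi_{m+\ell}\otimes\xi_{m}\bigr)=\copair_{1},
\]
using \eqref{eq:oddcopair}.

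I expect the main obstacle to be the sign conventions in the coordinate check: the sign of $b(\eta_{i},\eta_{j})$ and the sign of \eqref{eq:oddcopair}. The categorical argument avoids this, and that is the route I would present.
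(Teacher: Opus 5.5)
Your proof is correct, and your main route differs from the paper's. The paper's proof is precisely the coordinate check you give as a fallback. It notes that the left-hand side is a fixed linear contraction of $\copair\otimes\copair$, evaluates it on $\copair_{0}+\copair_{1}$, and settles the odd sector with the values of $b(\eta_{i},\eta_{j})$, which agree with yours.

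Your primary argument instead identifies that contraction as the image under $\fib$ of $(\mathrm{id}\otimes\mathrm{id}\otimes\mathrm{ev})\circ(\mathrm{id}\otimes c_{\genX,\genX}\otimes\mathrm{id})\circ(\mathrm{coev}\otimes\mathrm{coev})$. That morphism equals $\mathrm{coev}$ in $\conncat$ by strand unification. The only sign is then the Koszul sign $(-1)^{|w_{i}||u_{j}|}$ of the middle symmetry. One small correction: no sign arises at $b$ because $b$ is even, not merely because it acts on the rightmost factors.

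What your route buys is a coordinate-free, essentially sign-free proof in the paper's own idiom, where every identity is an equality of glued graphs. It also shows that the fusion used in Lemma~\ref{lemm:indexraising} is the image of a fragment identity, so that step could be carried out in $\conncat$ before $\fib$ is applied.

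What the paper's route buys is pure super linear algebra, independent of the fibre functor. It also records the values $b(\eta_{i},\eta_{j})$ that the paper reuses in the proof of Lemma~\ref{lemm:letters}. Your diagram is equally general if you run it directly in $\svect$: after inserting the second copairing it reduces to $(\mathrm{id}_{V}\otimes\phi)\circ\copair$ with $\phi(x)=\sum_{j}(-1)^{|x||u_{j}|}b(x,w_{j})\,u_{j}$. That map is $L_{\copair}=\mathrm{id}_{V}$ by the supersymmetry of $b$.
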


\begin{proof}
The left-hand side is a fixed linear contraction applied to
$\copair\otimes\copair$, so it may be computed in the standard
presentation $\copair_{0}+\copair_{1}$, and the even and odd
sectors do not interact since $b$ is even. Even sector: with
$u_{i}=w_{i}=e_{i}$ the sign is trivial and
$\sum_{i,j}b(e_{i},e_{j})\,e_{i}\otimes e_{j}
=\sum_{i}e_{i}\otimes e_{i}=\copair_{0}$. Odd sector: with
$u_{i}=\xi_{i}$, $w_{i}=\eta_{i}$ the sign is $-1$ throughout, and
from the coordinate table of Section~\ref{subsec:formcoords} one
computes $b(\eta_{i},\eta_{j})=\delta_{j,i+\ell}$ for $i\le\ell$
and $b(\eta_{i},\eta_{j})=-\delta_{j,i-\ell}$ for $i>\ell$, so
\[
-\sum_{i,j}b(\eta_{i},\eta_{j})\,\xi_{i}\otimes\xi_{j}
=-\sum_{i\le\ell}\xi_{i}\otimes\xi_{i+\ell}
+\sum_{i>\ell}\xi_{i}\otimes\xi_{i-\ell}
=\sum_{i}\xi_{i}\otimes\eta_{i}=\copair_{1}. \qedhere
\]
\end{proof}

\subsection{Vertex tensors, mates, and index
raising}\label{subsec:mates}

For each $d\ge0$ set
$T_{d}\coloneqq\fib(\sigma_{d})\in V^{\otimes d}$, the
image of the $d$-star of Section~\ref{subsec:categorydef}. A star
has no preferred order on its legs, and this becomes invariance:
for every $\pi\in S_{d}$ we have $P_{\pi}\circ\sigma_{d}
=\sigma_{d}$ in $\conncat$, since gluing strands merely relabels
open ends (Lemma~\ref{lemm:gluingfacts}(ii)) and every
relabelling of the legs of a star is realised by an automorphism
of the underlying fragment; applying $\fib$, which takes
$P_{\pi}$ to the super-permutation action of $\pi$, gives
$\pi\cdot T_{d}=T_{d}$, i.e.\
$T_{d}\in(V^{\otimes d})^{S_{d}}$. Each
$T_{d}$ is moreover \emph{even}, because morphisms from $\unitobj$ are
parity-preserving; this evenness is the ultimate origin of the
Eulerian condition in Definition~\ref{defi:mpf}. Let
$\symq{d}\colon V^{\otimes d}\to\SymV_{s}^{d}V$ be the quotient
onto the super symmetric power --- here and throughout we use the
canonical isomorphism of graded algebras
$\SymV_{s}V\cong\SymV V_{0}\otimes\Lambda V_{1}$, so that the
functionals of Definition~\ref{defi:mpf} act on super symmetric powers --- and
consider the functional $\beta_{d}(T_{d},-)$ on $V^{\otimes d}$.
By Lemma~\ref{lemm:koszul}(b) it is invariant under the super
$S_{d}$-action and therefore factors uniquely as
\[
\beta_{d}(T_{d},-)=h^{\mathrm{cat}}_{d}\circ\symq{d},
\qquad
h^{\mathrm{cat}}_{d}\in\bigl(\SymV_{s}^{d}V\bigr)^{*}.
\]
The functional $h^{\mathrm{cat}}_{d}$ is defined by the
factorisation alone, with no factor of $d!$. Since
$\SymV_{s}V=\bigoplus_{d\ge0}\SymV_{s}^{d}V$, the family
$(h^{\mathrm{cat}}_{d})_{d\ge0}$ assembles into a single
functional $h^{\mathrm{cat}}\in(\SymV_{s}V)^{*}$ --- the algebraic
dual of a direct sum imposes no finiteness condition --- and
likewise $h^{\mathrm{RS}}$ below; it is these assembled
functionals that Definition~\ref{defi:mpf} consumes.

For an even tensor $T\in V^{\otimes d}$, equivalently a morphism
$\mathbb{C}\to V^{\otimes d}$ of $\svect$, define the \emph{mate}
\[
\mate{T}\coloneqq b^{\otimes d}\circ\mathrm{Interleave}_{d}\circ
(T\otimes\mathrm{id}_{V^{\otimes d}})
\colon V^{\otimes d}\to\mathbb{C},
\]
where $\mathrm{Interleave}_{d}$ is the symmetry rearranging
$(v_{1}\dotsm v_{d},w_{1}\dotsm w_{d})\mapsto
(v_{1}w_{1}\dotsm v_{d}w_{d})$ and $b^{\otimes d}$ applies $b$ to
each adjacent pair. By Lemma~\ref{lemm:koszul}(a),
\begin{equation}\label{eq:matecoords}
\mate{T}(w)=\beta_{d}(T,w).
\end{equation}

For a closed graph $G$ without free circles, functionals $H_{v}$
on $V^{\otimes\deg v}$ and an element $D\in V\otimes V$, define
the \emph{tensor network}
\[
\mathrm{TN}'_{(H_{v}),D}(G)\coloneqq
\Bigl(\bigotimes_{v}H_{v}\Bigr)\circ
\mathrm{Regroup}_{G}\Bigl(\bigotimes_{e\in E}D\Bigr),
\]
where $\mathrm{Regroup}_{G}$ is the symmetry taking the word of
$D$-legs in edge order to the word in vertex--half-edge order; it
is a permutation of half-edge--indexed positions, well defined as
a composite of braidings by Lemma~\ref{lemm:koszul}. For a family
$h=(h_{d})$ of functionals on the super symmetric powers we write
$\mathrm{TN}_{h,D}(G)\coloneqq
\mathrm{TN}'_{(h_{\deg v}\circ\symq{\deg v}),D}(G)$; note that
the quotient maps are part of the definition. As written, the
definition presupposes auxiliary choices --- an order of the
edges, an order of the vertices and of the half-edges at each
vertex, and an assignment of the two legs of $D$ to the two
half-edges of each edge --- and the following lemma is what makes
the value a function of the unordered graph.

\begin{lemm}\label{lemm:TNinv}
Suppose that every $H_{v}$ is an even functional invariant under
the super $S_{\deg v}$-action, and that $D$ is even with
$c_{V,V}(D)=D$. Then $\mathrm{TN}'_{(H_{v}),D}(G)$ is independent
of all the auxiliary choices above.
\end{lemm}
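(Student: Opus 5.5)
The plan is to treat each auxiliary choice separately and show that changing it alters the composite only by a symmetry of $\svect$ that is absorbed by one of the hypotheses. The main tool is Lemma~\ref{lemm:koszul}. Since $\mathrm{Regroup}_{G}$ is defined as the composite of braidings realising a fixed permutation of half-edge--indexed tensor factors, its value depends only on that permutation. Different rearrangement paths with the same source and target words give the same signed map, so we may always replace a regrouping by any other composite of braidings with the same overall effect. The choices are four: the order of the edges, the order of the vertices, the order of the half-edges at each vertex, and the assignment of the legs of $D$ to the half-edges of each edge.

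First, reorder the edges. Each factor $D$ is even, so permuting the factors of $\bigotimes_{e}D$ by the symmetry of $\svect$ costs no Koszul sign. Hence $\bigotimes_{e}D$ in the new order is the braiding image of $\bigotimes_{e}D$ in the old order. The new regrouping, composed with this braiding, realises the same permutation of labelled half-edge factors as the old regrouping. By Lemma~\ref{lemm:koszul} the two composites agree.

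Second, swap the two legs of $D$ at one edge. This precomposes with $c_{V,V}$ on that factor, which leaves the tensor unchanged because $c_{V,V}(D)=D$. The new regrouping is the old one composed with that braiding, so again nothing changes.

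Third, reorder the half-edges at a vertex $v$ by $\pi$. The new regrouping equals the super $\pi$-action on the block of $v$ composed with the old regrouping, once more by Lemma~\ref{lemm:koszul}. Since $H_{v}\circ\pi=H_{v}$, this is absorbed.

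Fourth, reorder the vertices. This permutes whole blocks in $\bigotimes_{v}H_{v}$. Because each $H_{v}$ is even, the image of the regrouping is a sum of terms whose blocks all have even total parity, since odd-parity components are killed. On such blocks the braiding permuting them acts without sign. So $\bigl(\bigotimes_{v}H_{v}\bigr)$ in the new order, composed with the block permutation, equals the old composite, and the block permutation is again absorbed into the regrouping.

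The main obstacle is this last step and, more generally, justifying precisely that the signs match. I would handle it by restricting to homogeneous basis tensors of $\bigotimes_{e}D$. On such a tensor everything reduces to the rule in Lemma~\ref{lemm:koszul} that the sign equals the sign of the induced permutation of odd letters. Terms where some vertex block has odd parity vanish under $H_{v}$ on both sides. On the remaining terms, any permutation of entire blocks induces an odd-letter permutation that is a product of even-length block moves, and such a permutation is even.
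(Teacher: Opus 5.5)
Your proposal is correct and follows the paper's proof essentially verbatim. It uses the same four moves (reordering edges, swapping the legs of $D$, reordering half-edges at a vertex, reordering vertices), absorbed respectively by the evenness of $D$, the identity $c_{V,V}(D)=D$, the $S_{\deg v}$-invariance of $H_{v}$, and the evenness of $H_{v}$, with Lemma~\ref{lemm:koszul} equating the signed evaluations. Your argument on homogeneous terms for the vertex move is an unpacked version of the paper's appeal to naturality of the symmetry.
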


\begin{proof}
Two presentations differ by a composite of four moves: a
permutation of the edge factors of $\bigotimes_{e}D$, which costs
no Koszul sign because each factor is an even block
(Lemma~\ref{lemm:koszul}); a swap of the two legs of some factors,
which is absorbed by $c_{V,V}(D)=D$; a permutation of the
half-edge slots at each vertex, which is absorbed by the
invariance of $H_{v}$; and a permutation of the vertex blocks,
which is absorbed by naturality of the symmetry, with no sign
because the $H_{v}$ are even. Koszul functoriality
(Lemma~\ref{lemm:koszul}) then equates the two signed
evaluations.
\end{proof}

Every network appearing below satisfies these hypotheses: the
elements $\copair$ and $\twistedC$ are even and supersymmetric
(Section~\ref{subsec:copairing}); the mates $\mate{T_{d}}$ are
even, since $T_{d}$ and $b$ are, and invariant under the super
$S_{d}$-action by
Lemma~\ref{lemm:koszul}(b) and the invariance of $T_{d}$; and the
functionals $h_{d}\circ\symq{d}$ are even whenever $h$ vanishes in
odd exterior degree, and invariant under the super action by
construction.

\begin{lemm}[Index raising]\label{lemm:indexraising}
For every closed graph $G$ without free circles,
\[
f(G)=\mathrm{TN}'_{(\mate{T_{\deg v}}),\copair}(G)
=\mathrm{TN}_{h^{\mathrm{cat}},\copair}(G).
\]
\end{lemm}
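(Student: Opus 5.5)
The plan is to apply $\fib$ to the star decomposition \eqref{eq:stars}, and then to rewrite the resulting contraction so that each vertex carries the mate of its star and each edge carries a copy of $\copair$.

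Fix orders as in \eqref{eq:stars}. Since $\fib$ is $\mathbb{C}$-linear and symmetric monoidal and $[G]=f(G)\cdot 1$ in $\End(\unitobj)=\mathbb{C}$, we get
\[
f(G)=b^{\otimes E}\circ\fib(P_{G})\circ\Bigl(\bigotimes_{v}T_{\deg v}\Bigr).
\]
Here $\fib(P_{G})$ is the Koszul-signed permutation taking the star legs, listed in vertex--half-edge order, to edge order. This is a \emph{lowered} network: the vertices carry the tensors $T_{d}$ and the edges carry $b$.

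Next we raise indices. At each half-edge slot of each star we insert $\mathrm{id}_{V}=L_{\copair}$. Concretely, each leg $x$ of $T_{d}$ is replaced by $\sum_{i}u_{i}\,b(w_{i},x)$. Grouping the $b(w_{i},\cdot)$ factors with $T_{d}$ at its vertex produces exactly the mate $\mate{T_{d}}$, evaluated on the $w$-legs, via the interleaving of \eqref{eq:matecoords}. What remains at each edge is two copies of $\copair$, whose $u$-legs are contracted by the edge form $b$.

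Copairing fusion (Lemma~\ref{lemm:fusion}) then collapses each such pair to a single $\copair$ carrying the two half-edges of that edge. The result is $\mathrm{TN}'_{(\mate{T_{\deg v}}),\copair}(G)$. By Lemma~\ref{lemm:TNinv} this value is independent of the auxiliary orders, and the hypotheses of that lemma hold by the remarks following it.

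The main obstacle is sign bookkeeping. The insertions and regroupings involve odd legs, so every rearrangement must be checked to carry exactly the Koszul sign appearing in Lemma~\ref{lemm:fusion} and in $\beta_{d}$. I would handle this by using Lemma~\ref{lemm:koszul}: each side is a signed evaluation along some rearrangement path realising the same permutation of labelled homogeneous factors, so it suffices to exhibit one path edge by edge. For a single edge $e=\{a,a'\}$ the local computation is
\[
b(x_{a},x_{a'})=\sum_{i,j}(\pm)\,b(w_{i},x_{a})\,b(w_{j},x_{a'})\,b(u_{i},u_{j}).
\]
This identity is the fusion lemma transposed through nondegeneracy of $b$. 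I would verify it directly in the adapted coordinates $\copair_{0}+\copair_{1}$, using \eqref{eq:etapairing}. Loops need no separate treatment, because both legs are simply two slots of the same star.

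For the second equality, we have $\mate{T_{d}}=\beta_{d}(T_{d},-)=h^{\mathrm{cat}}_{d}\circ\symq{d}$ by \eqref{eq:matecoords} and the defining factorisation of $h^{\mathrm{cat}}$. This is precisely the definition of $\mathrm{TN}_{h^{\mathrm{cat}},\copair}$.
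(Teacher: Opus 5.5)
Your proposal is correct and follows the paper's proof. You apply $\fib$ to \eqref{eq:stars}, insert a snake on each of the $2|E|$ wires, regroup through Lemma~\ref{lemm:koszul} so that the vertices carry $\mate{T_{\deg v}}$ and the edges fuse to $\copair$ by Lemma~\ref{lemm:fusion}, and conclude with Lemma~\ref{lemm:TNinv} and \eqref{eq:matecoords}.

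The one difference is which snake is inserted. The paper uses the complementary snake $x=\sum_{i}b(x,u_{i})\,w_{i}$, so that the star leg sits in the first slot of $b$, as in $\beta_{d}$, and the edge contraction is literally Lemma~\ref{lemm:fusion}. Your choice $L_{\copair}=\mathrm{id}_{V}$ puts the star leg in the second slot and contracts the other legs at the edges. That is harmless, but it needs $b\circ c_{V,V}=b$ and $c_{V,V}(\copair)=\copair$ to make those two identifications, a step your ``exactly the mate'' passes over.
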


\begin{proof}
The second equality is \eqref{eq:matecoords} together with the
factorisation
$\beta_{d}(T_{d},-)=h^{\mathrm{cat}}_{d}\circ\symq{d}$. For
the first, apply $\fib$ to the star decomposition
\eqref{eq:stars}, giving
\begin{equation}\label{eq:starimage}
f(G)=\Bigl(\bigotimes_{e}b\Bigr)\circ P_{G}\circ
\Bigl(\bigotimes_{v}T_{\deg v}\Bigr),
\end{equation}
with $P_{G}$ now denoting the image of the matching symmetry.
Both snake identities hold strictly in $\conncat$ by strand
unification, and we insert on each of the $2|E|$ wires of
\eqref{eq:starimage} the form
\[
\mathrm{id}_{V}
=(b\otimes\mathrm{id}_{V})\circ(\mathrm{id}_{V}\otimes\copair),
\qquad\text{i.e.}\qquad
x=\sum_{i}b(x,u_{i})\,w_{i}
\quad\text{for }\copair=\sum_{i}u_{i}\otimes w_{i},
\]
chosen so that the star leg is contracted in first argument
position, matching the argument order of $\beta_{d}$. This is the
snake identity complementary to the one that gave
$L_{\copair}=\mathrm{id}_{V}$ in Section~\ref{subsec:copairing}.
Call
$u_{i}$ the \emph{mate leg} and $w_{i}$ the \emph{edge leg} of the
inserted copairing. Regrouping by naturality and coherence of the
symmetry (Lemma~\ref{lemm:koszul}): at each vertex, the inserted
evaluations pair the star legs of $T_{\deg v}$ against the mate
legs, in the order $b(\text{star leg},u_{i})$, which is by
definition the mate $\mate{T_{\deg v}}$ evaluated on the mate
legs; and at each edge, the original evaluation contracts the two
edge legs arriving from its endpoints, leaving the element
\[
\sum_{i,j}(-1)^{|w_{i}||u_{j}|}\,b(w_{i},w_{j})\;
u_{i}\otimes u_{j}=\copair
\]
of $V\otimes V$ feeding the two mate-leg slots --- the Koszul
sign arises from moving $w_{i}$ past $u_{j}$ during the
regrouping, and the identity is the Copairing fusion lemma
(Lemma~\ref{lemm:fusion}). The resulting
composite is by definition
$\mathrm{TN}'_{(\mate{T_{\deg v}}),\copair}(G)$, well defined on the
unordered graph by Lemma~\ref{lemm:TNinv}. We illustrate the
insertion, the two named legs, and the fusion in
Figure~\ref{fig:snake}.
\end{proof}

\begin{figure}
\centering
\begin{tikzpicture}[scale=0.9,
  box/.style={draw,rounded corners,inner sep=2.5pt,font=\small},
  lbl/.style={font=\small}]
  \node[box] (TA) at (0,0) {$T_{\deg v}$};
  \node[box] (TB) at (2.4,0) {$T_{\deg v'}$};
  \draw (0,0.3) -- (0,1.0) arc(180:0:1.2) -- (2.4,0.3);
  \node[lbl] at (1.2,2.5) {$b$};
  \draw (-0.35,0.3) -- (-0.5,0.7);
  \draw (2.75,0.3) -- (2.9,0.7);
  \node[lbl] at (-0.62,0.9) {$\cdots$};
  \node[lbl] at (3.05,0.9) {$\cdots$};
  \node at (4.0,1.1) {$\leadsto$};
  \begin{scope}[xshift=5.2cm]
    \node[box] (TC) at (0,0) {$T_{\deg v}$};
    \node[box] (TD) at (3.9,0) {$T_{\deg v'}$};
    \draw (0,0.3) -- (0,1.0) arc(180:0:0.35);         
    \node[lbl] at (0.35,1.6) {$b$};
    \draw (0.7,1.0) arc(180:360:0.35);                 
    \node[lbl] at (1.05,0.35) {$\copair$};
    \node[lbl] at (0.55,0.78) {$u$};
    \node[lbl] at (1.62,1.4) {$w$};
    \draw (1.4,1.0) -- (1.4,1.8) arc(180:0:0.55) -- (2.5,1.0);
    \node[lbl] at (1.95,2.6) {$b$};
    \draw (3.2,1.0) arc(360:180:0.35);                 
    \node[lbl] at (2.85,0.35) {$\copair$};
    \node[lbl] at (2.28,1.4) {$w$};
    \node[lbl] at (3.36,0.78) {$u$};
    \draw (3.9,0.3) -- (3.9,1.0) arc(0:180:0.35);      
    \node[lbl] at (3.55,1.6) {$b$};
    \draw (-0.35,0.3) -- (-0.5,0.7);
    \draw (4.25,0.3) -- (4.4,0.7);
    \node[lbl] at (-0.62,0.9) {$\cdots$};
    \node[lbl] at (4.55,0.9) {$\cdots$};
  \end{scope}
  \node at (10.3,1.1) {$\leadsto$};
  \begin{scope}[xshift=11.6cm]
    \node[box] (TE) at (0,2.0) {$\mate{T_{\deg v}}$};
    \node[box] (TF) at (2.4,2.0) {$\mate{T_{\deg v'}}$};
    \draw (0,1.7) -- (0,1.0) arc(180:360:1.2) -- (2.4,1.7);
    \node[lbl] at (1.2,-0.5) {$\copair$};
    \node[lbl] at (-0.25,1.3) {$u$};
    \node[lbl] at (2.65,1.3) {$u$};
    \draw (-0.4,1.7) -- (-0.55,1.3);
    \draw (2.8,1.7) -- (2.95,1.3);
    \node[lbl] at (-0.68,1.1) {$\cdots$};
    \node[lbl] at (3.1,1.1) {$\cdots$};
  \end{scope}
\end{tikzpicture}
\caption{The snake insertion on one edge. Left: the edge,
evaluated by $b$, between the vertex tensors of its endpoints.
Middle: on each half-edge, the inserted evaluation pairs the star
leg with the mate leg $u$ of a copairing, assembling the mates
$\mate{T}$, while the edge legs $w$ meet the original evaluation.
Right: the edge legs have fused into a single copairing between
the two mates (Lemma~\ref{lemm:fusion}).}
\label{fig:snake}
\end{figure}

\subsection{Parity transfer}\label{subsec:paritytransfer}

The functional $h^{\mathrm{cat}}$ and the copairing $\copair$ do
not yet match the Regts--Sevenster normalisation, which uses the
twisted element. The discrepancy is a sign for each edge of the
odd sector, and it cancels against a compensating sign at the
vertices, as follows. First,
$h^{\mathrm{cat}}_{d}$ vanishes on
$\SymV^{a}V_{0}\otimes\Lambda^{j}V_{1}$ for $j$ odd, since
$T_{d}$ is even and $\beta_{d}$ is an even pairing. Define the
involution $\parinv$ on $\SymV V_{0}\otimes\Lambda V_{1}$ by
$\parinv\coloneqq(-1)^{j(j-1)/2}\cdot\mathrm{id}$ on the summand
of exterior degree $j$ --- for $j=2m$ this is $(-1)^{m}$, and the
odd-$j$ values are immaterial by the vanishing just noted --- and
set
\[
h^{\mathrm{RS}}\coloneqq h^{\mathrm{cat}}\circ\parinv .
\]

\begin{lemm}[Parity transfer]\label{lemm:paritytransfer}
For every closed graph $G$ without free circles,
\[
\mathrm{TN}_{h^{\mathrm{cat}},\copair}(G)
=\mathrm{TN}_{h^{\mathrm{RS}},\twistedC}(G).
\]
\end{lemm}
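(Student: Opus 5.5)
The plan is to expand both tensor networks edge by edge according to the parity sector of the edge element, and to compare them term by term. Fix the auxiliary choices of Lemma~\ref{lemm:TNinv} once, the same for both sides; this is legitimate because both networks satisfy its hypotheses. On the left, $\copair$ is even and supersymmetric, and $h^{\mathrm{cat}}$ vanishes in odd exterior degree. On the right, $\twistedC$ is even and supersymmetric, and $h^{\mathrm{RS}}=h^{\mathrm{cat}}\circ\parinv$ inherits that vanishing. Write $\copair=\copair_{0}+\copair_{1}$ and $\twistedC=\copair_{0}-\copair_{1}$ (Section~\ref{subsec:copairing}). By multilinearity of $\bigotimes_{e}D$, each network is a sum over subsets $F\subseteq E$, where the edges of $F$ carry the odd part and those of $E\setminus F$ the even part. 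The $F$-term of the twisted network is $(-1)^{|F|}$ times the network built from $\copair_{0}$ on $E\setminus F$ and $\copair_{1}$ on $F$, with vertex functionals $h^{\mathrm{RS}}_{\deg v}\circ\symq{\deg v}$.

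Next I would check that, for fixed $F$, the two terms are otherwise identical apart from the vertex signs coming from $\parinv$. The regrouping symmetry $\mathrm{Regroup}_{G}$ contributes a Koszul sign. By Lemma~\ref{lemm:koszul}, this sign depends only on the parity word of the legs. That word is the same on both sides, since $\copair_{1}$ and $-\copair_{1}$ are built from the same homogeneous tensors. At a vertex $v$, the legs feeding $h_{\deg v}\circ\symq{\deg v}$ include exactly $j_{v}\coloneqq\deg_{F}(v)=|\halfedges{F}{v}|$ odd vectors. Hence $\symq{\deg v}$ maps them into $\SymV^{\deg v-j_{v}}V_{0}\otimes\Lambda^{j_{v}}V_{1}$, where $\parinv$ acts by the scalar $(-1)^{j_{v}(j_{v}-1)/2}$. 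If some $j_{v}$ is odd, both $F$-terms vanish, because $h^{\mathrm{cat}}$ vanishes in odd exterior degree. So only Eulerian $F$ contribute. For these, the vertex signs give a total factor $\prod_{v}(-1)^{j_{v}/2}$.

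The sign bookkeeping then closes by the handshake identity $\sum_{v}\deg_{F}(v)=2|F|$, which holds with loops counted twice by the half-edge convention. It gives
\[
\prod_{v}(-1)^{\deg_{F}(v)/2}=(-1)^{|F|},
\]
which cancels the edge factor $(-1)^{|F|}$ exactly. Summing over $F$ then yields the claimed equality.

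The only step needing genuine care is the second one: confirming that the expansion over $F$ commutes with the fixed regrouping and with $\symq{}$, so that the sole differences between the two $F$-terms are the scalars $(-1)^{|F|}$ and $\prod_{v}\parinv$. I would handle it by working with homogeneous basis expansions of $\copair_{0}$ and $\copair_{1}$ and invoking Lemma~\ref{lemm:koszul} for the parity-only dependence of the sign.
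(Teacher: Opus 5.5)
Your proposal is correct and is essentially the paper's proof: expand both networks over the set $F$ of edges carrying the odd part, discard non-Eulerian $F$ by the odd-degree vanishing of $h^{\mathrm{cat}}$, and cancel the vertex factor $\prod_{v}(-1)^{\deg_{F}(v)/2}=(-1)^{|F|}$ (handshake identity, loops counted twice) against the edge factor $(-1)^{|F|}$ coming from $\twistedC_{1}=-\copair_{1}$. The step you flag as needing care is immediate by linearity: the same regrouping map is applied to inputs that differ only by that scalar, so no comparison of Koszul signs is needed.
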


\begin{proof}
Expand both networks over the set $F$ of edges carrying the odd
part of the element ($\copair_{1}$ on the left, $\twistedC_{1}$
on the right); the even sectors agree term by term. Fix a
surviving term, and let $j_{v}\coloneqq\deg_{F}(v)$ be the number
of odd half-edges at $v$; terms with some $j_{v}$ odd vanish on
both sides by the odd-degree vanishing above, so every $j_{v}$ is
even. Replacing $h^{\mathrm{cat}}$ by $h^{\mathrm{RS}}$
multiplies the term by
$\prod_{v}(-1)^{j_{v}/2}=(-1)^{\frac12\sum_{v}j_{v}}
=(-1)^{|F|}$, by the handshake identity
$\sum_{v}\deg_{F}(v)=2|F|$, which is valid for loops precisely
because a loop contributes two half-edges. Replacing
$\copair_{1}$ by $\twistedC_{1}=-\copair_{1}$ multiplies the term
by $(-1)^{|F|}$ as well. The two factors cancel.
\end{proof}

In coordinates the transfer reads as follows. Call the monomial
\[
m=e_{\mu_{1}}\dotsm e_{\mu_{a}}\otimes
\xi_{i_{1}}\wedge\dotsb\wedge\xi_{i_{j}},
\qquad
\mu_{1}\le\dotsb\le\mu_{a},\quad
i_{1}<\dotsb<i_{j},\quad a+j=d,
\]
\emph{sorted}. The sorted monomials form a basis of
$\SymV V_{0}\otimes\Lambda V_{1}$, and under the canonical
isomorphism of Section~\ref{subsec:mates} the sorted monomial $m$
is the image under $\symq{d}$ of the word
\[
w_{m}\coloneqq e_{\mu_{1}}\otimes\dotsb\otimes e_{\mu_{a}}\otimes
\xi_{i_{1}}\otimes\dotsb\otimes\xi_{i_{j}}.
\]
The tensor words in the letters $e_{1},\dotsc,e_{k}$ and
$\eta_{1},\dotsc,\eta_{2\ell}$ form a basis of $V^{\otimes d}$,
each $\eta_{i}$ being a signed $\xi$; write $T_{d}[u]$ for the
coefficient of $T_{d}$ at the word $u$ of this basis.

\begin{lemm}[Coordinate dictionary]\label{lemm:dictionary}
For every sorted monomial $m$,
\[
h^{\mathrm{RS}}(m)
=T_{d}\bigl[e_{\mu_{1}}\otimes\dotsb\otimes e_{\mu_{a}}\otimes
\eta_{i_{1}}\otimes\dotsb\otimes\eta_{i_{j}}\bigr].
\]
\end{lemm}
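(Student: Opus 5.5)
The plan is to unwind the definitions of $h^{\mathrm{RS}}$ and $h^{\mathrm{cat}}_{d}$ back to the pairing $\beta_{d}$, and then evaluate $\beta_{d}(T_{d},w_{m})$ in the word basis of $e$'s and $\eta$'s. By definition $h^{\mathrm{RS}}=h^{\mathrm{cat}}\circ\parinv$, and $m$ has exterior degree $j$. Since $m=\symq{d}(w_{m})$, the factorisation $\beta_{d}(T_{d},-)=h^{\mathrm{cat}}_{d}\circ\symq{d}$ gives
\[
h^{\mathrm{RS}}(m)=(-1)^{j(j-1)/2}\,h^{\mathrm{cat}}_{d}(m)=(-1)^{j(j-1)/2}\,\beta_{d}(T_{d},w_{m}).
\]
So the whole statement reduces to the identity $\beta_{d}(T_{d},w_{m})=(-1)^{j(j-1)/2}\,T_{d}[u_{m}]$, where $u_{m}\coloneqq e_{\mu_{1}}\otimes\dotsb\otimes e_{\mu_{a}}\otimes\eta_{i_{1}}\otimes\dotsb\otimes\eta_{i_{j}}$.

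To prove this identity, expand $T_{d}=\sum_{u}T_{d}[u]\,u$ over the basis of homogeneous words $u=u_{1}\otimes\dotsb\otimes u_{d}$ in the letters $e_{p}$ and $\eta_{p}$, and use bilinearity of $\beta_{d}$. For a single word, $\beta_{d}(u,w_{m})$ is a sign times $\prod_{r}b(u_{r},(w_{m})_{r})$. I then read off the factors from the coordinates of Section~\ref{subsec:formcoords} and \eqref{eq:etapairing}.
\begin{enumerate}
\item[(i)] In the first $a$ slots, $b(e_{p},e_{\mu})=\delta_{p\mu}$ and $b(\eta_{p},e_{\mu})=0$.
\item[(ii)] In the last $j$ slots, $b(\eta_{p},\xi_{i})=\delta_{pi}$ and $b(e_{p},\xi_{i})=0$.
\end{enumerate}
Hence exactly one word survives, namely $u=u_{m}$, and for it every factor equals $1$. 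This is precisely why the dictionary is stated in the $\eta$-letters and not the $\xi$-letters.

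It remains to compute the Koszul sign $(-1)^{\sum_{r<s}|u_{s}||(w_{m})_{r}|}$ for $u=u_{m}$. The odd letters of both $u_{m}$ and $w_{m}$ occupy exactly the last $j$ positions, so the exponent counts pairs $r<s$ within those positions. There are $j(j-1)/2$ such pairs, so the sign is $(-1)^{j(j-1)/2}$, which cancels the sign contributed by $\parinv$.

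For odd $j$ the values of $\parinv$ are immaterial, and both sides vanish:
\begin{enumerate}
\item[(i)] $h^{\mathrm{cat}}_{d}$ vanishes in odd exterior degree, as noted in Section~\ref{subsec:paritytransfer};
\item[(ii)] $T_{d}$ is even, so its coefficient at a word with an odd number of odd letters is zero.
\end{enumerate}
Alternatively, the sign computation above goes through verbatim for every $j$ without needing this case split.

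The only delicate point is the sign bookkeeping: the convention in $\beta_{d}$ that the sign is charged for $w_{r}$ moving past the later $v_{s}$, and the order of arguments in $b$, which must be $b(\eta,\xi)=+1$ rather than $b(\xi,\eta)=-1$. I would check this against Lemma~\ref{lemm:koszul}(a) and the definition of the mate, where the star leg sits in the first argument. I would also double-check the degenerate cases $j=0$ and $a=0$ and the loop example of Section~\ref{subsec:mpf}.
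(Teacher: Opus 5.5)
Your proposal is correct and follows the paper's proof essentially step for step. You unwind $h^{\mathrm{RS}}=h^{\mathrm{cat}}\circ\parinv$ and the factorisation through $\symq{d}$ to reach $\beta_{d}(T_{d},w_{m})$. You expand $T_{d}$ in the word basis in the letters $e_{p},\eta_{p}$ and use the evenness of $b$ together with \eqref{eq:etapairing} to isolate the single word $u_{m}$, all of whose factors are $1$. Finally, the Koszul sign $(-1)^{j(j-1)/2}$ from the pairs of odd positions cancels against $\parinv$; as you note, this computation is uniform in $j$, so the odd-$j$ case split is unnecessary.
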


\begin{proof}
By definition and by bilinearity,
\[
h^{\mathrm{RS}}(m)=(-1)^{j(j-1)/2}\,\beta_{d}(T_{d},w_{m})
=(-1)^{j(j-1)/2}\sum_{u}T_{d}[u]\,\beta_{d}(u,w_{m}),
\]
the sum running over the words $u$ of the basis. Since $b$ is
even, $\beta_{d}(u,w_{m})$ vanishes unless $u$ carries an even
letter in each of the first $a$ positions and an odd letter in
each of the last $j$, say
$u=e_{\nu_{1}}\otimes\dotsb\otimes e_{\nu_{a}}\otimes
\eta_{i'_{1}}\otimes\dotsb\otimes\eta_{i'_{j}}$. For such $u$ the
product of the values of $b$ is
$\prod_{p}\delta_{\nu_{p}\mu_{p}}\prod_{r}\delta_{i'_{r}i_{r}}$,
by $b(e_{i},e_{i'})=\delta_{ii'}$ and \eqref{eq:etapairing}, and
the Koszul sign of
$\beta_{d}$ is $(-1)^{j(j-1)/2}$, one factor $-1$ for each pair of
odd positions. Hence
$\beta_{d}(T_{d},w_{m})=(-1)^{j(j-1)/2}\,
T_{d}[e_{\mu_{1}}\otimes\dotsb\otimes\eta_{i_{j}}]$, and the two
signs cancel.
\end{proof}

The lemma identifies $\parinv$ as the Koszul sign of $\beta_{d}$
on $j$ odd factors, the sign of interleaving, or equally of
reversing, $j$ odd letters, and it exhibits $h^{\mathrm{RS}}$ as a
coordinate reading of $T_{d}$ in the letters $e_{i}$ and
$\eta_{i}$. The colouring-dependent signs of
Definition~\ref{defi:mpf}, the partner signs of the $\eta_{i}$ and
the sign of the sorting permutation, arise separately, when the
$\kappa$-paired wedge of Section~\ref{subsec:mpf} is expanded in
the sorted basis.

A purely odd instance exercises the mate, the transfer and the
dictionary together. Let $V=\mathbb{C}^{0|2}$, so that
$b(\xi_{1},\xi_{2})=1$, $\eta_{1}=-\xi_{2}$ and $\eta_{2}=\xi_{1}$,
and let $T_{2}=\xi_{1}\otimes\xi_{2}-\xi_{2}\otimes\xi_{1}$, an
even $S_{2}$-invariant tensor. Then
\[
h^{\mathrm{cat}}(\xi_{1}\wedge\xi_{2})
=\beta_{2}(T_{2},\xi_{1}\otimes\xi_{2})
=(-1)^{1}\cdot(-1)\,b(\xi_{2},\xi_{1})\,b(\xi_{1},\xi_{2})=-1,
\qquad
h^{\mathrm{RS}}(\xi_{1}\wedge\xi_{2})=1,
\]
the latter also by the dictionary, since
$T_{2}=\eta_{1}\otimes\eta_{2}-\eta_{2}\otimes\eta_{1}$. On the
loop graph $L$ the two networks agree, as
Lemma~\ref{lemm:paritytransfer} requires: here
$\copair=\copair_{1}=-T_{2}$ and $\twistedC=T_{2}$, so
\[
\mathrm{TN}_{h^{\mathrm{cat}},\copair}(L)
=h^{\mathrm{cat}}\bigl(\symq{2}(-T_{2})\bigr)
=h^{\mathrm{cat}}(-2\,\xi_{1}\wedge\xi_{2})=2
=h^{\mathrm{RS}}(2\,\xi_{1}\wedge\xi_{2})
=\mathrm{TN}_{h^{\mathrm{RS}},\twistedC}(L),
\]
whereas the free circle evaluates to $k-2\ell=-2$.

The dictionary reads $T_{d}$ in the letters $\eta_{i}$. Reading
it in the letters $\xi_{i}$ instead gives a second functional,
which represents the same graph parameter. Let
$h^{\xi}\in(\SymV V_{0}\otimes\Lambda V_{1})^{*}$ assign to each
sorted monomial $m$ the coefficient of $T_{d}$ at the word
$w_{m}$ in the basis of $V^{\otimes d}$ formed by the tensor
words in the letters $e_{i}$ and $\xi_{i}$; it vanishes in odd
exterior degree, since $T_{d}$ is even. Let $\Psi$ be the even
automorphism of $V$ fixing $V_{0}$ and sending $\xi_{i}$ to
$\eta_{i}$.

\begin{lemm}[Change of letters]\label{lemm:letters}
$h^{\mathrm{RS}}=h^{\xi}\circ\SymV_{s}(\Psi)$, and
$\mathrm{TN}_{h^{\mathrm{RS}},\twistedC}(G)
=\mathrm{TN}_{h^{\xi},\twistedC}(G)$ for every closed graph $G$
without free circles.
\end{lemm}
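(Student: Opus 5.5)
The plan is to prove the two assertions separately: first the coefficient identity $h^{\mathrm{RS}}=h^{\xi}\circ\SymV_{s}(\Psi)$ on sorted monomials, then the equality of networks by pushing $\Psi$ from the vertices onto the edges.

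\emph{First assertion.} Both sides vanish in odd exterior degree, so it suffices to evaluate them on a sorted monomial $m$ of exterior degree $j$. By Lemma~\ref{lemm:dictionary}, $h^{\mathrm{RS}}(m)$ is the coefficient of $T_{d}$ at the word $e_{\mu_{1}}\otimes\dotsb\otimes e_{\mu_{a}}\otimes\eta_{i_{1}}\otimes\dotsb\otimes\eta_{i_{j}}$. On the other side, $\SymV_{s}(\Psi)(m)=e_{\mu_{1}}\dotsm e_{\mu_{a}}\otimes\eta_{i_{1}}\wedge\dotsb\wedge\eta_{i_{j}}$.

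Write $\eta_{i}=\varepsilon_{i}\xi_{\tau(i)}$, where $\tau$ is the involutive permutation $i\mapsto i\pm\ell$ of $[2\ell]$ and $\varepsilon_{i}=\pm1$. Then
\[
\eta_{i_{1}}\wedge\dotsb\wedge\eta_{i_{j}}
=\Bigl(\prod_{r}\varepsilon_{i_{r}}\Bigr)\operatorname{sgn}(\rho)\,
\xi_{i'_{1}}\wedge\dotsb\wedge\xi_{i'_{j}},
\]
where $\rho$ sorts $(\tau(i_{1}),\dotsc,\tau(i_{j}))$ into the increasing sequence $(i'_{r})$. Hence $h^{\xi}\bigl(\SymV_{s}(\Psi)(m)\bigr)$ equals $\prod_{r}\varepsilon_{i_{r}}\operatorname{sgn}(\rho)$ times the $\xi$-coefficient of $T_{d}$ at the sorted word.

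Next I would compute the same quantity through $T_{d}$. Since the $\eta$-word equals $\prod_{r}\varepsilon_{i_{r}}$ times the $\xi$-word $e_{\mu}\otimes\xi_{\tau(i_{1})}\otimes\dotsb\otimes\xi_{\tau(i_{j})}$, the $\eta$-coefficient of $T_{d}$ equals $\prod_{r}\varepsilon_{i_{r}}$ times the $\xi$-coefficient at that unsorted word. The invariance $\pi\cdot T_{d}=T_{d}$ (Section~\ref{subsec:mates}), together with Lemma~\ref{lemm:koszul}, relates the $\xi$-coefficients at two words differing by a permutation of positions: they differ by the sign of the induced permutation of the odd letters. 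This converts the unsorted coefficient into $\operatorname{sgn}(\rho)$ times the sorted one, which matches the previous paragraph.

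The degenerate case of a repeated odd index (two equal $\tau(i_{r})$) must be handled on both sides. The wedge vanishes. The coefficient of $T_{d}$ also vanishes: the transposition swapping the two equal odd letters fixes the word but acts with sign $-1$, so invariance forces the coefficient to equal its own negative. I expect this sign bookkeeping, keeping $\operatorname{sgn}(\rho)$ and the Koszul sign consistent, to be the only delicate point.

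\emph{Second assertion.} The key observation is that $\Psi$ is even, so $\Psi^{\otimes d}$ commutes with $\mathrm{Regroup}_{G}$ by naturality of the symmetry. Moreover $\SymV_{s}(\Psi)\circ\symq{d}=\symq{d}\circ\Psi^{\otimes d}$. Substituting the first assertion into the network therefore gives
\[
\mathrm{TN}_{h^{\mathrm{RS}},\twistedC}(G)=\mathrm{TN}_{h^{\xi},(\Psi\otimes\Psi)\twistedC}(G).
\]
It then remains to check that $(\Psi\otimes\Psi)\twistedC=\twistedC$. The even part $\copair_{0}$ is fixed because $\Psi$ fixes $V_{0}$. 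For the odd part, one computes $\Psi(\eta_{i})=-\xi_{i}$ in both ranges $i\le\ell$ and $i>\ell$. Hence
\[
(\Psi\otimes\Psi)\copair_{1}=-\sum_{i}\eta_{i}\otimes\xi_{i}=\copair_{1}
\]
by \eqref{eq:oddcopair}. It follows that $(\Psi\otimes\Psi)\twistedC=\copair_{0}-\copair_{1}=\twistedC$, which completes the second assertion.
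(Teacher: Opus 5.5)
Your proposal is correct and follows the paper's proof. The first identity comes from reading the coefficients of the $S_{d}$-invariant tensor $T_{d}$ in the $\xi$- and $\eta$-letters, with the Koszul sign of invariance matched against the sorting sign of the wedge. The second comes from pushing the even map $\Psi$ through $\mathrm{Regroup}_{G}$ onto the edge factors, where it fixes $\twistedC$.

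The differences are cosmetic. You check $(\Psi\otimes\Psi)\twistedC=\twistedC$ directly from $\Psi(\eta_{i})=-\xi_{i}$, where the paper uses that $\Psi$ is an isometry fixing $\copair$. Your repeated-index case is vacuous: $\tau$ is a bijection and the odd indices of a sorted monomial are distinct.
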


\begin{proof}
Let $\langle-,-\rangle$ be the bilinear form on $V^{\otimes d}$
for which the words in the letters $e_{i},\xi_{i}$ are
orthonormal; the words in the letters $e_{i},\eta_{i}$ are
orthonormal for it as well, each $\eta_{i}$ being a signed $\xi$,
so $\langle T_{d},-\rangle$ reads the coefficient of $T_{d}$ at
every word of either kind. Since $T_{d}$ is $S_{d}$-invariant,
its coefficient at a rearranged word is the Koszul sign times its
coefficient at the original word, and this is the sign that
$\symq{d}$ attaches when it sorts a word; hence
$h^{\xi}\circ\symq{d}=\langle T_{d},-\rangle$, and therefore
$h^{\xi}(\SymV_{s}(\Psi)\,m)
=\langle T_{d},\Psi^{\otimes d}w_{m}\rangle
=h^{\mathrm{RS}}(m)$ by Lemma~\ref{lemm:dictionary}, that is,
$h^{\mathrm{RS}}=h^{\xi}\circ\SymV_{s}(\Psi)$. Next, $\Psi$ is an
isometry of $(V,b)$, by the coordinate table of
Section~\ref{subsec:formcoords} and the values
$b(\eta_{i},\eta_{j})$ computed in the proof of
Lemma~\ref{lemm:fusion}, so it fixes $\copair$, which
$L_{\copair}=\mathrm{id}_{V}$ determines, and it fixes
$\twistedC=(\parityop\otimes\mathrm{id})\copair$ because it
commutes with $\parityop$; being even, it commutes with the
symmetries of $\svect$ too. Applying $\Psi^{\otimes2|E|}$ to
$\bigotimes_{e}\twistedC$ in the network
$\mathrm{TN}_{h^{\xi},\twistedC}(G)$ therefore changes nothing,
while moving it through $\mathrm{Regroup}_{G}$ to the vertices
replaces each $h^{\xi}\circ\symq{\deg v}$ by
$h^{\xi}\circ\SymV_{s}(\Psi)\circ\symq{\deg v}
=h^{\mathrm{RS}}\circ\symq{\deg v}$.
\end{proof}

\section{The circuit expansion and the Regts--Sevenster
formula}\label{sec:circuit}

Two tasks remain: to prove that the twisted tensor network of
Section~\ref{sec:extraction} is \emph{exactly} the Regts--Sevenster
sum of Definition~\ref{defi:mpf}, circuit signs included; and to assemble the
proof of the Main Theorem. The first
part is a statement about super linear algebra alone --- no
connection category enters --- and we state it as a theorem in its
own right, since it is the step that turns an abstract fibre
functor into the concrete model of \cite{RS21}.

\needspace{11\baselineskip}
\begin{theo}[Reconstruction]\label{theo:reconstruction}
Let $V=V_{0}\oplus V_{1}$ be a finite-dimensional super vector
space with a nondegenerate supersymmetric even bilinear form $b$,
coordinatised as in Section~\ref{subsec:formcoords}, let
$\twistedC$ be the twisted element, and let $h=(h_{d})$ be
functionals on the super symmetric powers of $V$ vanishing in odd
exterior degree. Then for every graph $G$ without free circles the
network equals the sum \eqref{eq:mpf} of
Definition~\ref{defi:mpf}:
\[
\mathrm{TN}_{h,\twistedC}(G)=p_{h}(G).
\]
\end{theo}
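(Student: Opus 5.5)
Expand the network edge by edge. Write $\twistedC=\twistedC_{0}+\twistedC_{1}$ with $\twistedC_{0}=\sum_{i}e_{i}\otimes e_{i}$ and $\twistedC_{1}=-\sum_{i}\xi_{i}\otimes\eta_{i}=\sum_{i}\eta_{i}\otimes\xi_{i}$, and distribute $\bigotimes_{e}\twistedC$ over the choice of sector at each edge. This writes $\mathrm{TN}_{h,\twistedC}(G)$ as a sum over subsets $F\subseteq E$, where $F$ is the set of edges carrying $\twistedC_{1}$, and then over colourings $\psi\colon E\setminus F\to[k]$ and $\varphi\colon F\to[2\ell]$. At a vertex $v$ the functional $h_{\deg v}\circ\symq{\deg v}$ receives $\deg_{F}(v)$ odd letters, so the term vanishes unless every $\deg_{F}(v)$ is even, because $h$ vanishes in odd exterior degree. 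Thus only Eulerian $F$ survive. The even letters are inert under all Koszul signs, and $\symq{}$ turns them into $\bigodot e_{\psi(a)}$. It therefore remains to show, for each fixed Eulerian $F$ and $\psi$, that the odd part of the term equals
\[
(-1)^{c(\kappa)}\sum_{\varphi}\prod_{v}h\Bigl(\cdots\otimes\bigwedge_{(a_{1},a_{2})\in\kappa_{v}}\xi_{\varphi(a_{1})}\wedge\eta_{\varphi(a_{2})}\Bigr).
\]

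By Lemma~\ref{lemm:TNinv} the value does not depend on the auxiliary choices, so I will choose them to suit $F$.
\begin{enumerate}
\item[(1)] Fix an Eulerian orientation of $F$ and a compatible local pairing $\kappa$.
\item[(2)] For each edge of $F$, assign the $\xi$-leg of the summand $-\xi_{i}\otimes\eta_{i}$ to the tail half-edge and the $\eta$-leg to the head half-edge. The supersymmetry $c_{V,V}(\twistedC)=\twistedC$ allows either assignment.
\item[(3)] Order the odd half-edges at each vertex as consecutive $\kappa$-pairs, each pair written (incoming, outgoing).
\end{enumerate}
With these choices, an incoming half-edge is the head of its edge and so carries $\eta_{\varphi(e)}$, while an outgoing half-edge carries $\xi_{\varphi(e')}$. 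Each vertex slot pair is then $\eta_{\varphi(a_{1})}\otimes\xi_{\varphi(a_{2})}$. Swapping to $\xi\otimes\eta$ order, the form that appears in \eqref{eq:mpf}, costs one sign, or absorbs one, depending on how the minus sign of $-\xi\otimes\eta$ is booked.

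The whole proof then reduces to a sign count. The factors are: one $-1$ per edge of $F$ from $\twistedC_{1}=-\sum\xi_{i}\otimes\eta_{i}$; the Koszul sign of $\mathrm{Regroup}_{G}$ restricted to odd letters; and the sign from reordering each pair into the $\xi\wedge\eta$ form. By Lemma~\ref{lemm:koszul}, the Regroup sign is the sign of the permutation induced on odd letters. That permutation takes the word in edge order, (tail, head) per edge, to the word in vertex order, (head of $e$, tail of $e'$) per $\kappa$-pair.

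Decompose $F$ into $\kappa$-circuits. Because every block involved (edge blocks and vertex pair-blocks) has even length, blocks can be moved freely at no cost. Hence the sign factors over circuits. For a circuit with edges $e_{1},\dotsc,e_{r}$ in cyclic order, the permutation carries $(t_{1}h_{1})(t_{2}h_{2})\cdots(t_{r}h_{r})$ to $(h_{1}t_{2})(h_{2}t_{3})\cdots(h_{r}t_{1})$. This is a cyclic shift of $2r$ odd letters by one position, an odd permutation. So each circuit contributes $-1$ from Regroup. I expect the remaining per-edge and per-pair signs to cancel: there are $r$ edge minus signs and $r$ pair reorderings in a circuit of length $r$, so altogether each circuit gives exactly $-1$ and the total is $(-1)^{c(\kappa)}$. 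After this the vertex arguments are literally $\prod_{v}h(\bigodot e_{\psi}\otimes\bigwedge\xi\wedge\eta)$, since $\symq{}$ sends the tensor word to the wedge.

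I expect this sign bookkeeping to be the main obstacle. Loops, where $h_{1}$ and $t_{1}$ share a vertex, and circuits that revisit a vertex must not disturb the cyclic-shift computation. I would check this by treating half-edges abstractly, as the paper's conventions allow. I would also confirm the final normalisation on the loop example of Section~\ref{subsec:mpf}, where the expected odd contribution is $-\sum_{\varphi}h(\xi_{\varphi}\wedge\eta_{\varphi})$.
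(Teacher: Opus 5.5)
Your proposal is essentially the paper's proof. It uses the same sector expansion of $\twistedC$, the same Eulerian reduction, the same appeal to Lemma~\ref{lemm:TNinv} to choose a $\kappa$-adapted presentation, and a sign count that factors over the $\kappa$-circuits with $-1$ for each.

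Only the sign bookkeeping differs. The paper puts the $\xi$-leg on the head, so its regrouping is an $n$-cycle on the even positions, of sign $(-1)^{n-1}$, set against $(-1)^{n}$ from the edge factors, and no reordering is needed at the vertices. Your $2r$-cycle, $r$ edge signs and $r$ pair swaps reach the same $-1$.

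One correction is needed. With your assignment (2), the pair $(a_{1},a_{2})\in\kappa_{v}$ ends up as $\xi_{\varphi(a_{2})}\wedge\eta_{\varphi(a_{1})}$, so the $\xi$ carries the colour of the \emph{outgoing} half-edge. What you obtain is therefore the summand of \eqref{eq:mpf} for the reversed orientation of $F$, not for the orientation fixed in (1). Your loop check cannot detect this, because there $\varphi(a_{1})=\varphi(a_{2})$.

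The slip is harmless. Reversing every edge of $F$ gives another Eulerian orientation, the same $\kappa$ is compatible with it, $c(\kappa)$ is unchanged, and your argument applies to every choice. You should say so explicitly, or simply assign the $\xi$-leg to the head, as the paper does.
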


The proof occupies
Sections~\ref{subsec:reduction}--\ref{subsec:circuitproof}. The
strategy is to expand the twisted element into its even and odd
parts, observe that the even part is diagonal and reproduces the
colouring sum over $\psi$, and reduce the odd part to a sign
computation along each circuit; the computation is carried out for
a single circuit and shown to be insensitive to everything else.

\subsection{The reduction}\label{subsec:reduction}

In the edge order fixed for the network, expand each factor as
$\twistedC=\twistedC_{0}+\twistedC_{1}$ and distribute:
\[
\bigotimes_{e\in E}\twistedC
=\sum_{F\subseteq E}\;
\bigotimes_{e\in E}\twistedC_{\mathbf{1}_{F}(e)},
\]
where $\mathbf{1}_{F}$ is the characteristic function of $F$, so
the $F$-summand carries $\twistedC_{1}$ on the edges of $F$ and
$\twistedC_{0}$ elsewhere; both parts being even tensors, the
regroupings below introduce no sign on this account.
The even part $\twistedC_{0}=\copair_{0}=\sum_{i}e_{i}\otimes
e_{i}$ is diagonal, and its legs contribute, at each vertex, the
word of $e_{\psi(a)}$ over the half-edges
$a\in\halfedges{E\setminus F}{v}$, summed over colourings $\psi$
of the edges of $E\setminus F$; this is precisely the
$\psi$-sum of Definition~\ref{defi:mpf}. Since each $h_{d}$ vanishes in odd
exterior degree, a term survives only if every vertex receives an
even number of odd half-edges, i.e.\ only if $F$ is Eulerian.
For Eulerian $F$, choose an Eulerian orientation and a compatible
local pairing $\kappa$, as in
Section~\ref{subsec:mpf}; by \cite[Proposition~3]{RS21} the
right-hand side of the Reconstruction Theorem is independent of
these choices, and the argument below, which applies to an
arbitrary choice, re-derives this. By
Lemma~\ref{lemm:TNinv} we may also choose the
half-edge order at each vertex so that the even legs occur first
and the odd legs occur in $\kappa$-paired order; the even and odd
legs then combine through the quotient map without interaction:
for even vectors $e_{i_{1}},\dotsc,e_{i_{a}}$ and odd vectors
$x_{1},y_{1},\dotsc,x_{m},y_{m}$,
\begin{equation}\label{eq:mixedquotient}
\symq{d}\bigl(e_{i_{1}}\otimes\dotsb\otimes e_{i_{a}}
\otimes x_{1}\otimes y_{1}\otimes\dotsb\otimes x_{m}\otimes
y_{m}\bigr)
=(e_{i_{1}}\odot\dotsb\odot e_{i_{a}})\otimes
(x_{1}\wedge y_{1})\wedge\dotsb\wedge(x_{m}\wedge y_{m})
\end{equation}
under the isomorphism
$\SymV_{s}V\cong\SymV V_{0}\otimes\Lambda V_{1}$: moving even legs
past odd legs costs no sign, and the right-hand side is exactly
the vertex argument of
Definition~\ref{defi:mpf}. Write $\mathrm{Regroup}_{F,\kappa}$ for the
restriction of $\mathrm{Regroup}_{G}$ to the odd legs, targeting
the $\kappa$-paired vertex--half-edge order. The theorem then
reduces to the identity of Lemma~\ref{lemm:circuitsign}, applied
circuit by circuit.

\subsection{The circuit sign}\label{subsec:circuitproof}

Consider a single $\kappa$-circuit of length $n$: edges
$a_{1},\dotsc,a_{n}$ traversed in order through
vertices $v_{1},\dotsc,v_{n}$, the head half-edge $x_{j}$ of
$a_{j}$ landing in the in-slot of $v_{j}$ and the tail half-edge
$y_{j}$ in the out-slot of $v_{j-1}$ (indices modulo $n$). The
$v_{j}$ denote successive visits and need not be distinct: a
vertex visited several times, by this circuit or by several,
receives one degree-two factor per visit, and these even factors
are collected afterwards by exterior multiplication, which
introduces no sign.

\begin{lemm}\label{lemm:cyclicparity}
The permutation from the edge-wise word
$(x_{1},y_{1},x_{2},y_{2},\dotsc,x_{n},y_{n})$ to the vertex-wise
word $(x_{1},y_{2}\,|\,x_{2},y_{3}\,|\,\dotsc\,|\,x_{n},y_{1})$
fixes the positions $1,3,\dotsc,2n-1$ and acts on the positions
$2,4,\dotsc,2n$ as a
single $n$-cycle. All $2n$ legs being odd, its Koszul sign is
$(-1)^{n-1}$.
\end{lemm}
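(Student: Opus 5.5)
The plan is to compute the permutation explicitly on positions and then apply Lemma~\ref{lemm:koszul}. Label the positions of the edge-wise word $1,\dotsc,2n$, so that $x_{j}$ sits at position $2j-1$ and $y_{j}$ at position $2j$. In the vertex-wise word the $j$-th block is $(x_{j},y_{j+1})$, indices modulo $n$, occupying positions $2j-1$ and $2j$. First I check that every $x_{j}$ stays at position $2j-1$; this gives the fixed odd positions. Next I check that $y_{j+1}$ moves from position $2j+2$ to position $2j$ (and $y_{1}$ from position $2$ to position $2n$). On the even positions the letters therefore shift cyclically down by one slot, so the permutation restricted to $\{2,4,\dotsc,2n\}$ is the single $n$-cycle $2j+2\mapsto 2j$.

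For the sign I invoke Lemma~\ref{lemm:koszul}: the Koszul sign of a rearrangement equals the sign of the permutation it induces on the odd letters. Here all $2n$ letters are odd, so the Koszul sign is the sign of the whole permutation of $2n$ positions. That permutation is an $n$-cycle together with $n$ fixed points, so its sign is $(-1)^{n-1}$.

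The only delicate point is bookkeeping, in two places. One must get the cycle convention right, but an $n$-cycle and its inverse have the same sign, so the direction is immaterial. One must also handle the degenerate case $n=1$ (a loop traversed once). There the vertex word is $(x_{1},y_{1})$, the permutation is the identity, and the sign is $(-1)^{0}=1$, consistent with the formula. Nothing else should be needed, since Lemma~\ref{lemm:koszul} already guarantees that the sign depends only on the permutation and not on the chosen sequence of adjacent swaps.
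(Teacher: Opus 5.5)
Your proposal is correct and follows the paper's own proof. The $x_{j}$ stay fixed in the odd positions, the $y$'s undergo a one-step cyclic rotation that forms an $n$-cycle on the even positions, and Lemma~\ref{lemm:koszul} turns this permutation sign into the Koszul sign $(-1)^{n-1}$ because every letter is odd; your explicit position bookkeeping and the check of the case $n=1$ simply spell out what the paper states in one line.
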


\begin{proof}
The $x_{j}$ occupy the positions $1,3,\dotsc,2n-1$ in both
words, in the same
order; the $y$'s move by one step of a cyclic rotation, which is
an $n$-cycle on the remaining positions, of sign $(-1)^{n-1}$; by
Lemma~\ref{lemm:koszul} the Koszul sign of a rearrangement of an
all-odd word is the sign of the permutation. We illustrate the
two words in Figure~\ref{fig:circuit}.
\end{proof}

\begin{figure}
\centering
\begin{tikzpicture}[scale=0.9,
  vert/.style={circle,fill,inner sep=1.6pt},
  l/.style={inner sep=1.5pt,font=\small}]
  \node[vert] (v1) at (90:1) {};
  \node[vert] (v2) at (210:1) {};
  \node[vert] (v3) at (330:1) {};
  \draw[->] (v1) -- node[left,font=\scriptsize] {$a_{2}$} (v2);
  \draw[->] (v2) -- node[below,font=\scriptsize] {$a_{3}$} (v3);
  \draw[->] (v3) -- node[right,font=\scriptsize] {$a_{1}$} (v1);
  \node[font=\scriptsize,above] at (v1.north) {$v_{1}$};
  \node[font=\scriptsize,below left] at (v2.south) {$v_{2}$};
  \node[font=\scriptsize,below right] at (v3.south) {$v_{3}$};
  \begin{scope}[xshift=3.1cm,yshift=0.75cm]
    \node[l,anchor=east] at (1.0,0) {edge-wise:};
    \node[l] (ty1) at (2.0,0) {$y_{1}$};
    \node[l] (tx1) at (1.5,0) {$x_{1}$};
    \node[l] (tx2) at (2.5,0) {$x_{2}$};
    \node[l] (ty2) at (3.0,0) {$y_{2}$};
    \node[l] (tx3) at (3.5,0) {$x_{3}$};
    \node[l] (ty3) at (4.0,0) {$y_{3}$};
    \node[l,anchor=east] at (1.0,-1.6) {vertex-wise:};
    \node[l] (bx1) at (1.5,-1.6) {$x_{1}$};
    \node[l] (by2) at (2.0,-1.6) {$y_{2}$};
    \node[l] at (2.25,-1.6) {$|$};
    \node[l] (bx2) at (2.5,-1.6) {$x_{2}$};
    \node[l] (by3) at (3.0,-1.6) {$y_{3}$};
    \node[l] at (3.25,-1.6) {$|$};
    \node[l] (bx3) at (3.5,-1.6) {$x_{3}$};
    \node[l] (by1) at (4.0,-1.6) {$y_{1}$};
    \draw[->,gray] (ty1.south) to[out=-60,in=120] (by1.north);
    \draw[->,gray] (ty2.south) to[out=-120,in=60] (by2.north);
    \draw[->,gray] (ty3.south) to[out=-120,in=60] (by3.north);
  \end{scope}
\end{tikzpicture}
\caption{Edge-wise and vertex-wise words on a circuit of length
three.}
\label{fig:circuit}
\end{figure}
\FloatBarrier

Three small auxiliaries dispose of the interactions between
circuits. First, the quotient map sends an interleaved product of
odd pairs to the wedge --- the all-odd case of
\eqref{eq:mixedquotient}. Second, each pair
$x\wedge y$ of odd vectors is an even block, so reordering local
pairs at a vertex, and moving complete circuit blocks past one
another --- including circuits sharing a vertex --- introduces no
sign. Third, each factor $\twistedC_{1}$ is a sum of terms with
two odd legs, hence an even block, so reordering the factors of
$\twistedC_{1}^{\otimes|F|}$ into circuit-consecutive order costs
no sign; the edge order in Lemma~\ref{lemm:cyclicparity} may
therefore be assumed circuit-consecutive.\footnote{This
normalisation is the only point at which the circuit
decomposition enters the proof, and it can be avoided: writing
$W$ for the walk permutation of the half-edges of $F$ determined
by $\kappa$ and the orientation, the orbits of
$W|_{\mathrm{out}}$, its restriction to outgoing half-edges, are
the circuits, so
$\operatorname{sgn}(W|_{\mathrm{out}})=(-1)^{|F|-c(\kappa)}$,
i.e.\ $(-1)^{c(\kappa)}=(-1)^{|F|}
\operatorname{sgn}(W|_{\mathrm{out}})$; the total regrouping
parity is the same permutation sign, and the two combine as in
Lemma~\ref{lemm:circuitsign} with no decomposition into circuits.
We keep the circuitwise proof for its transparency.}

\begin{lemm}\label{lemm:circuitsign}
With notation as above,
\[
\Bigl(\bigotimes_{v}\symq{\deg_{F}v}\Bigr)\circ
\mathrm{Regroup}_{F,\kappa}
\bigl(\twistedC_{1}^{\otimes|F|}\bigr)
=(-1)^{c(\kappa)}
\sum_{\varphi\colon F\to[2\ell]}\;
\bigotimes_{v}\bigwedge_{(a_{1},a_{2})\in\kappa_{v}}
\xi_{\varphi(a_{1})}\wedge\eta_{\varphi(a_{2})} .
\]
\end{lemm}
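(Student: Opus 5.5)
The plan is to reduce, using the three auxiliaries just listed, to a single $\kappa$-circuit and then compute its sign explicitly. Because every factor $\twistedC_{1}$ and every local pair $x\wedge y$ is an even block, I first reorder $\twistedC_{1}^{\otimes|F|}$ into circuit-consecutive order at no cost. I then observe that $\mathrm{Regroup}_{F,\kappa}$ acts as a product of independent rearrangements, one for each circuit, followed by a reordering of even blocks. That final reordering collects the degree-two factors at repeated vertices and shuffles circuit blocks past each other, and it introduces no sign. The quotient maps $\symq{\deg_{F}v}$ then send each interleaved odd pair to its wedge, by the all-odd case of \eqref{eq:mixedquotient}. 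The exterior product of these pairs at a vertex is independent of their order. Hence both sides factor as products over circuits, with the colouring sum $\varphi$ splitting as a product of independent sums over the edges. It therefore suffices to show that a single circuit of length $n$ contributes $-1$ times $\sum_{\varphi}\bigotimes_{j}\xi_{\varphi(a_{j})}\wedge\eta_{\varphi(a_{j+1})}$, the factor indexed by $j$ being the local pair at the visit $v_{j}$.

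For one circuit, I will use the form $\twistedC_{1}=-\sum_{i}\xi_{i}\otimes\eta_{i}$ from Section~\ref{subsec:copairing}. Since $\twistedC$ is supersymmetric, Lemma~\ref{lemm:TNinv} lets me assign the first leg of the factor on $a_{j}$ to its head half-edge $x_{j}$ and the second leg to its tail $y_{j}$. The edge-wise word is then
\[
(-1)^{n}\sum_{i_{1},\dotsc,i_{n}}\xi_{i_{1}}\otimes\eta_{i_{1}}\otimes\dotsb\otimes\xi_{i_{n}}\otimes\eta_{i_{n}},
\]
with $\xi_{i_{j}}$ in slot $x_{j}$ and $\eta_{i_{j}}$ in slot $y_{j}$. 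By Lemma~\ref{lemm:cyclicparity}, rearranging this all-odd word into the vertex-wise word $(x_{1},y_{2}\,|\,\dotsb\,|\,x_{n},y_{1})$ costs $(-1)^{n-1}$. Applying the quotient at each visit yields $\xi_{i_{j}}\wedge\eta_{i_{j+1}}$. Here the pair at $v_{j}$ is (incoming $x_{j}$, outgoing $y_{j+1}$), exactly as Definition~\ref{defi:mpf} prescribes with $a_{1}$ incoming. Writing $\varphi(a_{j})\coloneqq i_{j}$ gives total sign $(-1)^{n}(-1)^{n-1}=-1$ per circuit, and the product over circuits gives $(-1)^{c(\kappa)}$.

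The step needing most care is the orientation convention: it must be checked that the head half-edge is the one receiving $\xi$. The incoming slot receives $\xi$ and the outgoing slot receives $\eta$ in Definition~\ref{defi:mpf}, so head $\leftrightarrow$ first leg ($\xi$) is forced, and supersymmetry of $\twistedC$ makes this choice free. If the other leg order were used, the swap $c_{V,V}$ would supply a Koszul sign that cancels against \eqref{eq:oddcopair}, so the result is unchanged; I would verify this on the loop graph $L$ from Section~\ref{subsec:mpf}, which is the case $n=1$ with expected value $-\sum_{i}\xi_{i}\wedge\eta_{i}$.

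The other delicate point is justifying that the factorisation over circuits is legitimate when one vertex is visited several times. There, each visit's two odd legs must stay adjacent throughout, and this is exactly what the $\kappa$-paired half-edge order chosen in Section~\ref{subsec:reduction} guarantees. With that order, all cross-circuit moves are moves of even blocks, and Lemma~\ref{lemm:koszul} says their signs compose trivially.
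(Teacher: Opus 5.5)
Your proposal is correct and follows the paper's proof essentially step for step. You reduce to a single circuit via the three auxiliaries, write $\twistedC_{1}=-\sum_{i}\xi_{i}\otimes\eta_{i}$ with the head leg carrying $\xi$, and combine the $(-1)^{n}$ from the edge factors with the $(-1)^{n-1}$ of Lemma~\ref{lemm:cyclicparity} to get $-1$ per circuit. One small point: the leg assignment is justified directly by $c_{V,V}(\twistedC_{1})=\twistedC_{1}$ at the level of tensors, not by Lemma~\ref{lemm:TNinv}, which concerns scalar network values — a point you effectively make yourself.
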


\begin{proof}
By the third auxiliary the edge factors are in
circuit-consecutive order, and by the second the identity
factors over the circuits of $\kappa$, the degree-two factors of
the visits at a common vertex being multiplied together at the
end; fix one circuit, of length $n$. Writing each of its edge factors as
$\twistedC_{1}=-\sum_{i}\xi_{i}\otimes\eta_{i}$
(Section~\ref{subsec:copairing}), with the head leg carrying
$\xi$ and the tail leg carrying $\eta$ --- an assignment licensed
by the supersymmetry of $\twistedC$
(Section~\ref{subsec:copairing}) --- the $n$ edge factors
contribute the sign $(-1)^{n}$ in total. The regrouping to the
vertex-wise word contributes $(-1)^{n-1}$
(Lemma~\ref{lemm:cyclicparity}). The resulting vertex-wise word
is $\bigotimes_{j}\xi_{\varphi(a_{j})}\otimes
\eta_{\varphi(a_{j+1})}$, constant colourings and mixed
colourings alike being carried along; the quotient maps convert
it to the displayed wedges by the first auxiliary, with no
further sign. The total sign per circuit is
$(-1)^{n}(-1)^{n-1}=-1$, independent of $n$, giving
$(-1)^{c(\kappa)}$ overall. Loops are the case $n=1$ and pairs of
parallel edges the case $n=2$, by the half-edge conventions of
Section~\ref{subsec:graphs}.
\end{proof}

\begin{proof}[Proof of the Reconstruction Theorem]
Combine Lemma~\ref{lemm:circuitsign} with the reduction of
Section~\ref{subsec:reduction}: for each Eulerian $F$, the odd
sector produces the sign $(-1)^{c(\kappa)}$ together with the
$\varphi$-sum of wedges, the even sector produces the
$\psi$-sum, and applying the functionals $h_{\deg v}$ at the
vertices yields exactly the summand of Definition~\ref{defi:mpf} for $F$.
\end{proof}

For the loop graph $L$ of Section~\ref{subsec:mpf}, the case
$n=1$ of Lemma~\ref{lemm:circuitsign} reproduces the hand
computation of Section~\ref{subsec:mpf} term by term. A check of
a different kind concerns two circuits at a common vertex, which
a single loop leaves untested. Take one vertex with two loops $a$
and $b$, both
in $F$, with $\ell\ge2$ and colours $\varphi(a)=i$ and
$\varphi(b)=j$ from different symplectic pairs, so that the wedge
of exterior degree four is nonzero; orient each loop. The local
pairing that joins each loop to itself has two circuits and vertex
argument $\xi_{i}\wedge\eta_{i}\wedge\xi_{j}\wedge\eta_{j}$; the
one that pairs the incoming half-edge of each loop with the
outgoing half-edge of the other has one circuit and vertex
argument $\xi_{i}\wedge\eta_{j}\wedge\xi_{j}\wedge\eta_{i}$. Moving
$\eta_{i}$ leftward past $\xi_{j}$ and $\eta_{j}$ costs two
transpositions and no sign, and the final exchange of $\eta_{j}$
and $\xi_{j}$ costs one, so
$\xi_{i}\wedge\eta_{j}\wedge\xi_{j}\wedge\eta_{i}
=-\xi_{i}\wedge\eta_{i}\wedge\xi_{j}\wedge\eta_{j}$. The passage
from two circuits to one flips the circuit sign, the change of
wedge flips it back, and the two summands of Definition~\ref{defi:mpf} agree,
as \cite[Proposition~3]{RS21} guarantees; on the network side the
two pairings are presentations of one choice-free value.

\subsection{Multiplicativity, circles, and the proof of the Main
Theorem}\label{subsec:assembly}

The mixed partition function of Definition~\ref{defi:mpf} is
multiplicative over disjoint union: the Eulerian
subset, both colourings, and the local circuit data of a disjoint
union all split componentwise, and $c(\kappa)$ is additive; the
circle convention extends this to graphs with free circles.

\begin{proof}[Proof of the Main Theorem]
If $f=p_{h}$ with $k+2\ell\le R$, then
$\rk\connmat{f}{t}\le\max(1,k+2\ell)^{t}\le R^{t}$ for all
$t\ge0$ by \cite[Theorem~6]{RS21}, as recalled in
Section~\ref{subsec:mpf}. For the converse, let $f$
satisfy (H1) and (H2). Write an arbitrary graph
as $G=G^{\circ}\sqcup\freecircle^{\,r}$ with $G^{\circ}$ free of
circles. Then
\begin{align*}
f(G)&=f(G^{\circ})\,f(\freecircle)^{r}
&&\text{(Lemma~\ref{lemm:mult})}\\
&=(k-2\ell)^{r}\,
\mathrm{TN}_{h^{\mathrm{cat}},\copair}(G^{\circ})
&&\text{(Section~\ref{subsec:copairing},
Lemma~\ref{lemm:indexraising})}\\
&=(k-2\ell)^{r}\,
\mathrm{TN}_{h^{\mathrm{RS}},\twistedC}(G^{\circ})
&&\text{(Lemma~\ref{lemm:paritytransfer})}\\
&=(k-2\ell)^{r}\,p_{h^{\mathrm{RS}}}(G^{\circ})
&&\text{(the Reconstruction Theorem)}\\
&=p_{h^{\mathrm{RS}}}(G)
&&\text{(circle convention),}
\end{align*}
so $f=p_{h^{\mathrm{RS}}}$ is a mixed partition function on
$\mathbb{C}^{k|2\ell}$, and $k+2\ell\le R$ by
Corollary~\ref{coro:dimbound}.
\end{proof}

\section{Minimal models and prescribed dimensions}\label{sec:consequences}

We close with the intrinsic content of the dimension bound. Except
in
Theorem~\ref{theo:prescribed} and Corollary~\ref{coro:schrijver},
whose hypotheses are stated in full, $f$ satisfies (H1) and (H2)
throughout, $V=\fib(\genX)$ is the super vector space of
Section~\ref{subsec:fibrefunctor}, with $\dim V_{0}=k$ and
$\dim V_{1}=2\ell$, and $h^{\mathrm{RS}}$ is the functional of
Section~\ref{subsec:paritytransfer}, so that
$f=p_{h^{\mathrm{RS}}}$ by the Main Theorem. We call
$(k,\ell,h^{\mathrm{RS}})$ the \emph{reconstructed model} of $f$,
and any triple $(k',\ell',h')$ with
$h'\in(\SymV\mathbb{C}^{k'}\otimes\Lambda\mathbb{C}^{2\ell'})^{*}$
and $f=p_{h'}$ a \emph{model} of $f$; the pair $(k',2\ell')$ is
the model's \emph{dimension pair} and $k'+2\ell'$ its \emph{total
number of colours}.

\subsection{The dimensions of the models}\label{subsec:growthbase}

Recall the growth base
\[
\growthbase=
\sup_{t\ge1}\bigl(\rk\connmat{f}{t}\bigr)^{1/t},
\]
the edge-rank connectivity of \cite{RS21} with the value zero
allowed; a real number $R\ge1$ satisfies (H2) iff
$R\ge\growthbase$, since the case $t=0$ of (H2) does not involve
$R$.

\begin{lemm}[Growth]\label{lemm:growthrate}
Let $d\coloneqq k+2\ell$ be the total number of colours of the
reconstructed model. Then $\rk\connmat{f}{2n}\le d^{2n}$ for
every $n\ge1$, and, when $d\ge1$,
\[
d^{2n}\le\binom{n+d^{2}-1}{d^{2}-1}\,\rk\connmat{f}{2n};
\]
consequently $(\rk\connmat{f}{2n})^{1/2n}\to d$ as $n\to\infty$,
and $\growthbase=d$.
\end{lemm}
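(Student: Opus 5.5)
The upper bound $\rk\connmat{f}{2n}\le d^{2n}$ comes from the reverse direction of the Main Theorem. The Main Theorem identifies $f=p_{h^{\mathrm{RS}}}$ on $\mathbb{C}^{k|2\ell}$. Regts and Sevenster's theorem \cite[Theorem~6]{RS21} then gives $\rk\connmat{f}{t}\le\max(1,d)^{t}$ for all $t$. When $d\ge1$ this is $d^{t}$. When $d=0$, the lower-bound part is vacuous and the upper bound needs a small separate check. In that case $V=0$, so $\fib(\genX)=0$, and faithfulness of $\fib$ forces $\End(\genX^{\otimes n})=0$ for $n\ge1$, that is, $\rk\connmat{f}{2n}=0=d^{2n}$. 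Alternatively one can read this off from \cite[Theorem~6]{RS21} with the convention $0^{t}=0$ for $t\ge1$.

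The lower bound is exactly Lemma~\ref{lemm:diminequality}, with $d=\dim V$; nothing new is needed there.

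For the limit, combine the two bounds: $d\,\binom{n+d^{2}-1}{d^{2}-1}^{-1/2n}\le(\rk\connmat{f}{2n})^{1/2n}\le d$. The binomial is at most $(n+1)^{d^{2}-1}$, so its $2n$-th root tends to $1$, as in Corollary~\ref{coro:dimbound}.

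The statement $\growthbase=d$ then follows from two inequalities. For $\growthbase\ge d$: $\growthbase$ is a supremum over all $t$, so it dominates the limit along even $t$, which is $d$. For $\growthbase\le d$: I would use the full Regts--Sevenster bound $\rk\connmat{f}{t}\le d^{t}$ for every $t\ge1$, not only even $t$, which holds since $f$ has a model with $d$ colours; this gives every term of the supremum at most $d$. The case $d=0$ needs the vanishing of $\rk\connmat{f}{t}$ for all $t\ge1$, including odd $t$. For odd $t$ I would argue via faithfulness: $\Hom(\unitobj,\genX^{\otimes t})$ embeds into $\Hom(\mathbb{C},0)=0$. More generally, all $\Hom(\genX^{\otimes s},\genX^{\otimes t})$ with $s+t\ge1$ vanish, and these have dimension $\rk\connmat{f}{s+t}$.

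I expect the main obstacle to be this bookkeeping of the degenerate case $d=0$, together with the conventions $0^{0}=1$ and $\max(1,\cdot)$. The rest is a direct assembly of Lemma~\ref{lemm:diminequality}, the Main Theorem, and \cite[Theorem~6]{RS21}.
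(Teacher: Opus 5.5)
Your proposal is correct and follows the paper's proof. You use \cite[Theorem~6]{RS21} on the reconstructed model for the upper bound at every arity, which also gives $\growthbase\le d$, Lemma~\ref{lemm:diminequality} for the lower bound, and a sandwich for the limit. The only variation is that you dispose of $d=0$ through the faithfulness of $\fib$, whereas the paper reads \cite[Theorem~6]{RS21} directly as $\rk\connmat{f}{t}\le 0^{t}=0$ for $t\ge1$; both arguments are valid.
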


\begin{proof}
The first inequality is \cite[Theorem~6]{RS21} applied to the
reconstructed model, $f=p_{h^{\mathrm{RS}}}$, and the second is
Lemma~\ref{lemm:diminequality}. If $d=0$, the same theorem gives
$\rk\connmat{f}{t}=0$ for every $t\ge1$, so the limit and
$\growthbase$ both vanish. If $d\ge1$, taking $2n$-th roots gives
the limit, since
$\bigl(\tbinom{n+d^{2}-1}{d^{2}-1}\bigr)^{1/2n}\to1$; hence
$\growthbase\ge d$, while $\growthbase\le d$ because
$\rk\connmat{f}{t}\le d^{t}$ for every $t\ge1$ by the same
theorem.
\end{proof}

\begin{lemm}[Padding]\label{lemm:padding}
Let $h_{1}\in(\SymV\mathbb{C}^{k_{1}}\otimes
\Lambda\mathbb{C}^{2\ell_{1}})^{*}$, and let $k_{2}\ge k_{1}$ and
$\ell_{2}\ge\ell_{1}$ satisfy $k_{2}-2\ell_{2}=k_{1}-2\ell_{1}$.
Then there is $h_{2}\in(\SymV\mathbb{C}^{k_{2}}\otimes
\Lambda\mathbb{C}^{2\ell_{2}})^{*}$ with $p_{h_{2}}=p_{h_{1}}$.
\end{lemm}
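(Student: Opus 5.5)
The plan is to pad by zero: enlarge the colour set and extend $h_{1}$ by zero on every monomial that involves a new colour. Then every term of \eqref{eq:mpf} that uses a new colour on some edge vanishes, and the surviving terms are literally those of $p_{h_{1}}$. The arithmetic condition $k_{2}-2\ell_{2}=k_{1}-2\ell_{1}$ is needed only for the free circle, whose value is fixed by convention rather than by the sum.

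\textbf{Step 1: the embedding.} Write $e_{1},\dotsc,e_{k_{2}}$ and $\xi_{1},\dotsc,\xi_{2\ell_{2}}$ for the bases on the larger side, with $\eta$ defined from $\ell_{2}$ as in Section~\ref{subsec:mpf}. The even colours embed by $i\mapsto i$. For the odd colours I will use the injection
\[
\iota\colon[2\ell_{1}]\to[2\ell_{2}],\qquad \iota(i)=i\ (i\le\ell_{1}),\qquad \iota(i)=i+\ell_{2}-\ell_{1}\ (i>\ell_{1}).
\]
This map respects the symplectic pairing: $\iota(i)+\ell_{2}=\iota(i+\ell_{1})$ for $i\le\ell_{1}$. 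Consequently the linear map $\xi_{i}\mapsto\xi_{\iota(i)}$ sends $\eta_{i}$, computed with $\ell_{1}$, to $\eta_{\iota(i)}$, computed with $\ell_{2}$, signs included.

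\textbf{Step 2: the functional.} The even and odd embeddings induce an injective map $J$ from the sorted-monomial basis of $\SymV\mathbb{C}^{k_{1}}\otimes\Lambda\mathbb{C}^{2\ell_{1}}$ into that of the larger space. Sorting is preserved up to a reordering sign; to avoid tracking that sign, define $h_{2}$ as the functional that agrees with $h_{1}$ composed with the inverse of the induced algebra map on the image subalgebra, and vanishes on every sorted monomial containing a basis vector outside the image. Since the induced algebra map is an injective graded algebra homomorphism, this is well defined, and $h_{2}$ still vanishes in odd exterior degree.

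\textbf{Step 3: comparing the sums.} Fix a graph $G$ without free circles, an Eulerian $F$, and an orientation with a compatible local pairing $\kappa$. The sign $(-1)^{c(\kappa)}$ does not depend on the colour space. Every edge that is not a free circle has at least one endpoint. So if $\psi$ or $\varphi$ assigns a new colour to some edge, then the vertex argument at that endpoint lies in the span of monomials containing a new basis vector, and $h_{2}$ kills that product term.

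The remaining colourings are exactly the images of colourings $\psi\colon E\setminus F\to[k_{1}]$ and $\varphi\colon F\to[2\ell_{1}]$. By Step 1 each vertex argument is the image of the corresponding argument on the small side, since $\xi_{\varphi(a_{1})}\wedge\eta_{\varphi(a_{2})}$ maps to $\xi_{\iota\varphi(a_{1})}\wedge\eta_{\iota\varphi(a_{2})}$. Hence $h_{2}$ returns the same value as $h_{1}$, and $p_{h_{2}}(G)=p_{h_{1}}(G)$.

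\textbf{Step 4: free circles.} Here $p_{h_{2}}(\freecircle)=k_{2}-2\ell_{2}=k_{1}-2\ell_{1}=p_{h_{1}}(\freecircle)$. Both parameters are multiplicative over disjoint union, so they agree on all graphs.

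The only delicate point is Step 1: the $\eta$-convention depends on $\ell$, so the naive inclusion $i\mapsto i$ on odd indices would break the pairing between $\xi_{i}$ and its partner $\eta_{i}$. The index shift $\iota$ is exactly what fixes this.
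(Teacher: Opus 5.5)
Your proposal is correct and follows the paper's proof essentially step for step: the same partner-respecting embedding $\xi_{m+\ell_{1}}\mapsto\xi_{m+\ell_{2}}$, the extension of $h_{1}$ by zero off the image, the vanishing of every colouring that uses a new colour, and the balance condition $k_{2}-2\ell_{2}=k_{1}-2\ell_{1}$ for the free circle. One small remark: your $\iota$ is increasing, so sorted monomials go to sorted monomials with no reordering sign at all, although your sign-robust definition of $h_{2}$ does not need this.
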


\begin{proof}
Embed $\mathbb{C}^{k_{1}|2\ell_{1}}$ into
$\mathbb{C}^{k_{2}|2\ell_{2}}$ by $e_{i}\mapsto e_{i}$ for
$i\le k_{1}$ and, for $m\le\ell_{1}$, by $\xi_{m}\mapsto\xi_{m}$
and $\xi_{m+\ell_{1}}\mapsto\xi_{m+\ell_{2}}$; write $\iota$ both
for this embedding and for the induced injections of the colour
sets. The embedding respects the signed partners of
Section~\ref{subsec:mpf}: $\iota(\eta_{i})=\eta_{\iota(i)}$ for
every odd colour $i$, the right-hand side being formed in
$\mathbb{C}^{k_{2}|2\ell_{2}}$, as one checks separately for
$i\le\ell_{1}$ and for $i>\ell_{1}$. Let $h_{2}$ agree with
$h_{1}$ on the image of $\iota$ and vanish on every basis monomial
of $\SymV\mathbb{C}^{k_{2}}\otimes\Lambda\mathbb{C}^{2\ell_{2}}$
that involves a basis vector outside the image. In the sum of
Definition~\ref{defi:mpf} for $p_{h_{2}}(G)$, a colouring that uses a colour
outside the image contributes zero: an even such colour places a
basis vector outside the image in the symmetric factor at both
endpoints of its edge, and an odd one places $\xi_{j}$ or
$\eta_{j}$ in the exterior factor at an endpoint, and both lie
outside the image, which is closed under partners. The remaining
colourings are the images under $\iota$ of the colourings of
Definition~\ref{defi:mpf} for $h_{1}$, and the partner identity shows that
their vertex arguments correspond term by term; so
$p_{h_{2}}(G)=p_{h_{1}}(G)$ for every circle-free $G$. The values
on free circles agree by $k_{2}-2\ell_{2}=k_{1}-2\ell_{1}$.
\end{proof}

The following theorem contains Theorem~\ref{theo:introdims}.

\begin{theo}[Model dimensions]\label{theo:modeldims}
Let $(k,\ell,h^{\mathrm{RS}})$ be the reconstructed model of $f$.
\begin{enumerate}
\item[(a)] $\growthbase=k+2\ell
  =\lim_{n\to\infty}(\rk\connmat{f}{2n})^{1/2n}$; in particular
  $\growthbase$ is a nonnegative integer.
\item[(b)] $k=\bigl(\growthbase+f(\freecircle)\bigr)/2$ and
  $2\ell=\bigl(\growthbase-f(\freecircle)\bigr)/2$; in particular
  $f(\freecircle)$ is an integer with
  $|f(\freecircle)|\le\growthbase$,
  $\growthbase+f(\freecircle)\in2\mathbb{N}$ and
  $\growthbase-f(\freecircle)\in4\mathbb{N}$.
\item[(c)] The dimension pairs of the models of $f$ are exactly
  the pairs $(k+2r,\,2\ell+2r)$ with $r\in\mathbb{N}$. In
  particular every model of $f$ has at least $k$ even and at least
  $2\ell$ odd colours, the total numbers of colours of the models
  of $f$ are exactly the numbers $\growthbase+4r$ with
  $r\in\mathbb{N}$, and $\growthbase$ is the least of them.
\end{enumerate}
\end{theo}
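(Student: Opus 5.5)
Part (a) is Lemma~\ref{lemm:growthrate} restated, together with the observation that $k+2\ell$ is a nonnegative integer. For (b), I combine (a) with the free-circle value. In Section~\ref{subsec:copairing} we computed $f(\freecircle)=\str(\mathrm{id}_{V})=k-2\ell$. Solving the linear system $k+2\ell=\growthbase$, $k-2\ell=f(\freecircle)$ gives the stated formulas. The integrality and divisibility assertions then follow at once from $k,\ell\in\mathbb{N}$: $\growthbase+f(\freecircle)=2k$ and $\growthbase-f(\freecircle)=4\ell$.

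For (c) I prove two inclusions. First, every pair $(k+2r,2\ell+2r)$ is attained. Apply Lemma~\ref{lemm:padding} to the reconstructed model with $k_{2}=k+2r$ and $\ell_{2}=\ell+r$; the superdimension is preserved because $(k+2r)-2(\ell+r)=k-2\ell$.

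Conversely, let $(k',\ell',h')$ be any model. Its superdimension is pinned by the circle convention of Definition~\ref{defi:mpf}: $k'-2\ell'=p_{h'}(\freecircle)=f(\freecircle)=k-2\ell$. Its total is bounded below using Regts--Sevenster \cite[Theorem~6]{RS21}, which gives $\rk\connmat{f}{t}\le(k'+2\ell')^{t}$ for $t\ge1$, hence $\growthbase\le k'+2\ell'$ (with $0^{t}=0$ handled trivially). Write $k'+2\ell'=\growthbase+s$ with $s\ge0$. Subtracting the superdimension equation, $4\ell'=4\ell+s$ and $2k'=2k+s$. The first forces $s\in4\mathbb{N}$, say $s=4r$, and then $k'=k+2r$ and $2\ell'=2\ell+2r$.

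The remaining claims are read off from this description. Every model has at least $k$ even and $2\ell$ odd colours, the totals are exactly $\growthbase+4r$, and the least total is $\growthbase$ (attained at $r=0$).

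The only delicate point is the lower bound $\growthbase\le k'+2\ell'$ for an arbitrary model. This needs Regts--Sevenster's upper bound applied to that model rather than to the reconstructed one, and a separate check of the degenerate case $k'+2\ell'=0$. In that case their theorem gives $\rk\connmat{f}{t}=0$ for $t\ge1$, so $\growthbase=0$ and the argument goes through unchanged. Everything else is arithmetic, with no need for faithfulness of $\fib$.
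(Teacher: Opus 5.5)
Your proposal is correct and follows the paper's proof: (a) is Lemma~\ref{lemm:growthrate}, (b) solves the two linear equations using $f(\freecircle)=k-2\ell$, and (c) combines the Regts--Sevenster bound for an arbitrary model with its free-circle value, using Lemma~\ref{lemm:padding} for the converse. Your parametrisation by $s=k'+2\ell'-\growthbase$ is only a rearrangement of the paper's add-and-subtract step with $r=\ell'-\ell$.
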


\begin{proof}
Part (a) is Lemma~\ref{lemm:growthrate}. For (b),
$f(\freecircle)=k-2\ell$ by Section~\ref{subsec:copairing}, and
solving the two equations $k+2\ell=\growthbase$ and
$k-2\ell=f(\freecircle)$ gives the formulas; the remaining
assertions read them off. For (c), let $(k',\ell',h')$ be a model
of $f$. By \cite[Theorem~6]{RS21} applied to $h'$,
$\rk\connmat{f}{t}\le(k'+2\ell')^{t}$ for all $t\ge1$, so
$k+2\ell=\growthbase\le k'+2\ell'$; and
$k'-2\ell'=p_{h'}(\freecircle)=f(\freecircle)=k-2\ell$ by the
circle convention of Section~\ref{subsec:mpf}. Adding the two
relations gives $k'\ge k$, and subtracting them gives
$k'-k=2(\ell'-\ell)$; so $(k',2\ell')=(k+2r,2\ell+2r)$ with
$r\coloneqq\ell'-\ell\ge0$. Conversely, for every $r\in\mathbb{N}$
the pair $(k+2r,2\ell+2r)$ satisfies the balance condition of
Lemma~\ref{lemm:padding}, which pads the reconstructed model to a
model with that dimension pair.
\end{proof}

Theorem~\ref{theo:modeldims} determines the dimensions of the
models, not the models themselves: minimal models are not unique,
even up to changes of coordinates preserving the form. On
$\mathbb{C}^{2|0}$ with its
standard form, let $h_{0}$ and $h_{u}$ both take the value $1$ in
degree zero and vanish in every degree at least two, and in degree
one let $h_{0}$ vanish while $h_{u}(e_{1})=1$ and
$h_{u}(e_{2})=\sqrt{-1}$. A vertex of degree at least two makes
either vertex factor vanish, so both partition functions vanish on
every graph with such a vertex; on a component consisting of one
edge and its two endpoints they take the values $0^{2}+0^{2}$ and
$1^{2}+(\sqrt{-1})^{2}$, both zero; and both are $1$ on edgeless
graphs and $2$ on a free circle. Hence $p_{h_{0}}=p_{h_{u}}$, and
both models are minimal, the free-circle value $2$ forcing at
least two colours; yet no change of coordinates carries the zero
functional $h_{0}$ in degree one to the nonzero functional
$h_{u}$. The degree-one part of $h_{u}$ is an isotropic vector, so
the orbit of $h_{u}$ under the orthogonal group accumulates at
$h_{0}$, and the partition function, a continuous invariant of the
orbit, cannot separate the two.

The limit in (a) is taken along even arities, because the
symmetric-group actions that produce the lower bound live on
$\End(\genX^{\otimes n})$, whose dimension is the rank at arity
$2n$; and it cannot be taken along all arities. Consider the
model on $\mathbb{C}^{1|0}$ whose functional is $1$ on
$e_{1}^{\odot d}$ for even $d$ and $0$ for odd $d$: its partition
function is $1$ on the circle-free graphs all of whose degrees are
even and $0$ on the others, and $1$ on a free circle. The internal
vertices of a $t$-fragment have degrees summing to $2|E|-t$, where
$E$ is the set of edges other than free circles, and gluing does
not change these degrees, so for odd $t$ every closure
of a $t$-fragment has a vertex of odd degree, and
$\rk\connmat{f}{t}=0$; while $\rk\connmat{f}{2n}=1$ for every
$n$, the closure of $n$ strands against $n$ strands being $n$ free
circles. Here $\growthbase=1$, and $(\rk\connmat{f}{t})^{1/t}$
vanishes for every odd $t$.

\subsection{An example: the characteristic polynomial}
\label{subsec:charpoly}

We return to the example of Section~\ref{subsec:mpf}: for
$\theta\in\mathbb{C}$, the parameter $p_{h(\theta)}$ is
$G\mapsto\det(\theta I-A_{G})$ on circle-free graphs and vanishes
on graphs with a free circle, a model on $\mathbb{C}^{2|2}$ with
free-circle value $0$. The statement concerns this parameter on
the multigraphs of Section~\ref{subsec:graphs}, free circles
included; its proof uses loops.

\begin{coro}[The characteristic polynomial]\label{coro:charpoly}
For every $\theta\in\mathbb{C}$, the dimension pairs of the models
of $p_{h(\theta)}$ are exactly the pairs $(2+2r,2+2r)$ with
$r\in\mathbb{N}$: the model of Regts and Sevenster on
$\mathbb{C}^{2|2}$ is minimal, the growth base is $4$, and
$(\rk\connmat{p_{h(\theta)}}{2n})^{1/2n}\to4$.
\end{coro}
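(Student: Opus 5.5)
The plan is to derive the corollary from Theorem~\ref{theo:modeldims}, applied to $f\coloneqq p_{h(\theta)}$. First I check the hypotheses. Since $f$ is a mixed partition function, it satisfies (H1) and (H2) with $R=4$ by \cite[Theorem~6]{RS21}. So $f$ has a reconstructed model $(k,\ell,h^{\mathrm{RS}})$, and Theorem~\ref{theo:modeldims} applies.

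Next I pin down $(k,2\ell)$ using the Regts--Sevenster model itself. That model is a model of $f$ with dimension pair $(2,2)$. By Theorem~\ref{theo:modeldims}(c) this pair equals $(k+2r,2\ell+2r)$ for some $r\in\mathbb{N}$. Hence $k\le2$ and $2\ell\le2$. Moreover $k-2\ell=f(\freecircle)=0$, so the only candidates are $(k,2\ell)=(2,2)$ and $(k,2\ell)=(0,0)$.

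The one substantive step is to exclude $(0,0)$, which is equivalent to $\growthbase>0$. By Theorem~\ref{theo:modeldims}(a), it suffices to find some $t\ge1$ with $\rk\connmat{f}{t}\ge1$. Any graph with a non-circle edge $e$ is the full gluing of two $2$-fragments: cut $e$ at an interior point and split the graph into a fragment and strands. More simply, $G=F\glue\strand$ where $F$ is obtained by cutting $e$ into two open ends. So it suffices to exhibit one such graph $G$ with $f(G)\ne0$.

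I will use the loop graph $L$ together with $K_{2}$:
\[
f(L)=\theta-2,
\qquad
f(K_{2})=\det\begin{pmatrix}\theta&-1\\-1&\theta\end{pmatrix}=\theta^{2}-1 .
\]
These cannot both vanish, since $\theta=2$ gives $\theta^{2}-1=3$. Thus for every $\theta$ some entry of $\connmat{f}{2}$ is nonzero, so $\rk\connmat{f}{2}\ge1$ and $\growthbase\ge1$. This rules out $(0,0)$; note that with $k=2\ell=0$ the formula of Definition~\ref{defi:mpf} would make $f$ vanish on every graph having a non-circle edge. I expect this to be the only point needing care, namely making the exclusion uniform in $\theta$, which is why two test graphs are used: the loop handles all $\theta\neq2$, and $K_{2}$ handles $\theta=2$.

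Hence $(k,2\ell)=(2,2)$. Theorem~\ref{theo:modeldims}(c) then gives exactly the dimension pairs $(2+2r,2+2r)$, so the Regts--Sevenster model on $\mathbb{C}^{2|2}$ is minimal. Part (a) of the same theorem gives $\growthbase=k+2\ell=4$ and $(\rk\connmat{f}{2n})^{1/2n}\to4$.
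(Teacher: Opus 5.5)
Your proof is correct and follows essentially the paper's route: apply Theorem~\ref{theo:modeldims} to $p_{h(\theta)}$, use the Regts--Sevenster pair $(2,2)$ and $f(\freecircle)=0$ to reduce to the candidates $(0,0)$ and $(2,2)$, and exclude the colourless case by exhibiting, for each $\theta$, a graph with an edge on which $f$ is nonzero. The differences are minor. The paper excludes $(0,0)$ directly, since a colourless model vanishes on every graph with an edge, and uses the one- and two-loop graphs ($\theta-2$ and $\theta-4$). You instead pass through $\rk\connmat{f}{2}\ge1$ and part (a), with $K_{2}$ as the second test graph.
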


\begin{proof}
Let $k$ and $2\ell$ be the even and odd dimensions of the
reconstructed model of $p_{h(\theta)}$. By
Theorem~\ref{theo:modeldims}(b), $k=2\ell$
and $\growthbase=4\ell$, and by (c) the dimension pairs of the
models are the $(2\ell+2r,2\ell+2r)$ with $r\in\mathbb{N}$. The
model with no colours vanishes on every graph with an edge,
whereas for the graph $L_{m}$ with one vertex and $m$ loops, whose
adjacency matrix is $(2m)$, $p_{h(\theta)}(L_{m})=\theta-2m$, and
$\theta-2$ and $\theta-4$ do not both vanish; so $\ell\ge1$. The
model of Section~\ref{subsec:mpf} has dimension pair $(2,2)$,
which is of the form $(2\ell+2r,2\ell+2r)$ only for $\ell=1$ and
$r=0$. The limit is Theorem~\ref{theo:modeldims}(a).
\end{proof}

The bound $k+2\ell\le R$ of the Main Theorem is thus attained,
with $R=4$, by the characteristic polynomial at every value of
$\theta$; and the growth base $4$ beside the circle value $0$
shows rank growth detecting the least total dimension of a model,
and the free circle the superdimension of every model.

\subsection{Prescribed dimensions}\label{subsec:prescribed}

Theorem~\ref{theo:modeldims}(c) describes the models of a
parameter already known to satisfy (H1) and (H2). We now
characterise the parameters admitting a model on a prescribed
$\mathbb{C}^{k'|2\ell'}$, in two forms: one in terms of connection
ranks, valid for every normalised parameter, and one in terms of
the connection category.

\begin{theo}[Prescribed dimensions]\label{theo:prescribed}
Let $f$ be a graph parameter with $f(\emptygraph)=1$, and let
$k',\ell'\in\mathbb{N}$. Then $f=p_{h'}$ for some
$h'\in(\SymV\mathbb{C}^{k'}\otimes\Lambda\mathbb{C}^{2\ell'})^{*}$
iff
\[
f(\freecircle)=k'-2\ell'
\qquad\text{and}\qquad
\rk\connmat{f}{t}\le(k'+2\ell')^{t}\ \text{ for all }t\ge0,
\]
with $0^{0}=1$.
\end{theo}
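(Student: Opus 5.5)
The forward direction is immediate from results already recalled. If $f=p_{h'}$ with $h'$ on $\mathbb{C}^{k'|2\ell'}$, then $f(\freecircle)=k'-2\ell'$ by the circle convention of Definition~\ref{defi:mpf}. Regts and Sevenster's bound \cite[Theorem~6]{RS21}, in the form recalled in Section~\ref{subsec:mpf}, gives $\rk\connmat{f}{t}\le(k'+2\ell')^{t}$ for all $t\ge0$, with $0^{0}=1$ covering the degenerate model.

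For the converse, I first dispose of the degenerate case $k'+2\ell'=0$. Then $k'=\ell'=0$ and $f(\freecircle)=0$. The hypothesis at $t=0$ gives multiplicativity (Lemma~\ref{lemm:mult}), and the hypothesis at $t\ge1$ gives $\rk\connmat{f}{t}=0$. Every closed graph $G$ containing an edge is a full gluing $F\glue F'$ of two $t$-fragments for some $t\ge1$: cut any edge in the middle, or split a free circle into two strands. Hence $f(G)=0$ whenever $G$ has an edge. For an edgeless graph, $f(K_{1})$ should be matched by the value $h'(1)$ in degree zero on $\mathbb{C}^{0|0}$, which is free. So $f=p_{h'}$ with $h'$ the functional on $\SymV\mathbb{C}^{0}\otimes\Lambda\mathbb{C}^{0}=\mathbb{C}$ equal to $f(K_{1})$.

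Otherwise $R\coloneqq k'+2\ell'\ge1$, so (H1) and (H2) hold with this $R$. The Main Theorem and the reconstructed model $(k,\ell,h^{\mathrm{RS}})$ then apply, and Theorem~\ref{theo:modeldims} gives the rest. By (a), $k+2\ell=\growthbase\le R=k'+2\ell'$, since $R$ satisfies (H2) iff $R\ge\growthbase$. By (b) and the hypothesis, $k-2\ell=f(\freecircle)=k'-2\ell'$. Adding and subtracting these two relations, exactly as in the proof of Theorem~\ref{theo:modeldims}(c), yields $k'\ge k$ and $k'-k=2(\ell'-\ell)$, hence $\ell'\ge\ell$. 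The Padding Lemma~\ref{lemm:padding} then transports $h^{\mathrm{RS}}$ to a functional $h'$ on $\mathbb{C}^{k'|2\ell'}$ with $p_{h'}=p_{h^{\mathrm{RS}}}=f$.

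The main obstacle, such as it is, lies in the degenerate case and the boundary conventions rather than in the nondegenerate argument, which is pure bookkeeping. Specifically, two checks are needed: that every graph with an edge is indeed a full gluing at positive arity, and that $p_{h'}$ on $\mathbb{C}^{0|0}$ is $f(K_{1})^{|V|}$ on edgeless graphs and zero otherwise. The Eulerian-subset sum with no colours reduces, for edgeless $G$, to the single term $\prod_{v}h'(1)$, and it vanishes as soon as an edge needs a colour. Both are direct from Definition~\ref{defi:mpf}.
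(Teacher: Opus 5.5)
Your proof is correct and follows the paper's argument. The converse goes through the reconstructed model and uses Theorem~\ref{theo:modeldims}(a) and (b) to obtain $k\le k'$ and $\ell\le\ell'$ with equal superdimension, then padding, which is exactly how the paper's appeal to part (c) is realised. The only difference is your separate hand check of $k'=\ell'=0$. It is valid but unnecessary: the paper takes $R=\max(1,k'+2\ell')$ in (H2), and $\growthbase\le k'+2\ell'$ still follows directly from the definition of $\growthbase$ as a supremum over $t\ge1$, so the general argument already covers the degenerate case.
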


The case $t=0$ of the rank condition reads
$\rk\connmat{f}{0}\le1$, which with $f(\emptygraph)=1$ is the
multiplicativity of $f$ (Lemma~\ref{lemm:mult}); the theorem
includes $k'=\ell'=0$, where the conditions say that $f$ is
multiplicative and vanishes on every graph with an edge or a
circle.

\begin{proof}
If $f=p_{h'}$, then $f(\freecircle)=k'-2\ell'$ by the circle
convention, and the rank condition is \cite[Theorem~6]{RS21}, as
recalled in Section~\ref{subsec:mpf}. Conversely, the rank
condition is (H2) with $R=\max(1,k'+2\ell')$, so $f$ has a
reconstructed model $(k,\ell,h^{\mathrm{RS}})$ and
Theorem~\ref{theo:modeldims} applies: $\growthbase\le k'+2\ell'$
by the definition of $\growthbase$, so $k+2\ell\le k'+2\ell'$ by
(a), while $k-2\ell=f(\freecircle)=k'-2\ell'$ by (b). As in the
proof of (c), $(k',2\ell')=(k+2r,2\ell+2r)$ for some
$r\in\mathbb{N}$, and (c) provides a model with that dimension
pair.
\end{proof}

For the second form recall from Section~\ref{subsec:fibrefunctor}
that a partition $\lambda\vdash n$ \emph{survives} for $\genX$ if
$\Phi_{n}(e_{\lambda})\ne0$ in $\End(\genX^{\otimes n})$ and
\emph{dies} otherwise, and from Appendix~\ref{app:zeta} the fat
hook $\hook{a}{b}=\{\lambda:\lambda_{a+1}\le b\}$, outside which
lie exactly the partitions containing the rectangle
$\bigl((b+1)^{a+1}\bigr)$.

\begin{lemm}[Survival hook]\label{lemm:survivalhook}
A partition survives for $\genX$ iff it lies in $\hook{k}{2\ell}$.
\end{lemm}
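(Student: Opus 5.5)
We reduce survival for $\genX$ to nonvanishing of the super Schur functor on $V=\fib(\genX)$, and then invoke the hook criterion of Berele and Regev. Fix $n$ and $\lambda\vdash n$.

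The first step is to show that $\lambda$ survives for $\genX$ iff $\schur{\lambda}{V}\ne0$. One direction was recorded before Lemma~\ref{lemm:diminequality}: if $\Phi_{n}(e_{\lambda})=0$ then $\Phi^{V}_{n}(e_{\lambda})=\fib(\Phi_{n}(e_{\lambda}))=0$. For the converse we use that $\fib$ is faithful (Theorem~\ref{theo:fibre}). Faithfulness means that $\fib$ is injective on $\End(\genX^{\otimes n})$. Hence $\Phi_{n}(e_{\lambda})\ne0$ forces
\[
\Phi^{V}_{n}(e_{\lambda})=\fib(\Phi_{n}(e_{\lambda}))\ne0.
\]
By the isotypic decomposition \eqref{eq:isotypic}, $\Phi^{V}_{n}(e_{\lambda})$ is the projection onto $U_{\lambda}\otimes\schur{\lambda}{V}$. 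So it is nonzero exactly when $\schur{\lambda}{V}\ne0$.

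For this step to be valid, the identification $\fib\circ\Phi_{n}=\Phi^{V}_{n}$ must be the super-permutation action. This holds because $\fib$ is symmetric monoidal, so it carries the braiding of $\cauchy$ to the Koszul-signed flip of $\svect$. That fact is already stated in Section~\ref{subsec:fibrefunctor}.

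The second step determines when $\schur{\lambda}{V}\ne0$ for $V=\mathbb{C}^{k|2\ell}$. Here $\Phi^{V}_{n}$ is the standard signed action of $S_{n}$ on $V^{\otimes n}$. The theorem of Berele and Regev describes which irreducibles $U_{\lambda}$ occur in $V^{\otimes n}$ for a super space of dimension $(k|m)$: they are exactly the $\lambda$ in the hook $\hook{k}{m}$, that is, those with $\lambda_{k+1}\le m$. Taking $m=2\ell$ gives the statement.

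I expect the main obstacle to be convention-matching rather than any real difficulty. We must check that the sign convention of $\svect$ here, with the odd part carrying the signed flip, matches Berele--Regev's convention and not its transpose. Transposing would swap the roles of $k$ and $2\ell$ and turn the hook into $\{\lambda:\lambda'_{k+1}\le 2\ell\}$, which is wrong.

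To pin this down I would check two degenerate cases. When $\ell=0$, $V$ is purely even and $V^{\otimes n}$ is Schur--Weyl ordinary. Then the surviving $\lambda$ are those with at most $k$ rows, i.e.\ $\lambda\in\hook{k}{0}$, as required. When $k=0$, the action is the sign-twisted one. Then $\lambda$ occurs iff $\lambda'$ has at most $2\ell$ rows, i.e.\ $\lambda_{1}\le2\ell$, which matches $\hook{0}{2\ell}$.

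With the convention fixed, the two steps combine: $\lambda$ survives for $\genX$ iff $\schur{\lambda}{V}\ne0$ iff $\lambda\in\hook{k}{2\ell}$.
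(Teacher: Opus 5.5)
Your proposal is correct and follows the paper's own proof: linearity gives one direction and faithfulness of $\fib$ the other in $\Phi_{n}(e_{\lambda})\ne0\iff\fib(\Phi_{n}(e_{\lambda}))\ne0$. The isotypic decomposition \eqref{eq:isotypic} turns this into $\schur{\lambda}{V}\ne0$, and the Berele--Regev hook criterion (which the paper cites in Deligne's form, \cite[Cor.~1.9]{Del02}) finishes the argument. Your checks in the degenerate cases $\ell=0$ and $k=0$ are a sound way to confirm that the relevant hook is $\hook{k}{2\ell}$ rather than its transpose $\hook{2\ell}{k}$.
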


\begin{proof}
By the hook criterion for super vector spaces
\cite[Cor.~1.9]{Del02} (in substance the theorem of Berele and
Regev \cite{BR87}), $\schur{\lambda}{V}\ne0$ exactly when
$\lambda\in\hook{k}{2\ell}$, and $\schur{\lambda}{V}\ne0$ iff
$\Phi^{V}_{n}(e_{\lambda})=\fib(\Phi_{n}(e_{\lambda}))$ is nonzero
(Section~\ref{subsec:fibrefunctor}). This is where the
faithfulness of $\fib$ is used: $\fib(\Phi_{n}(e_{\lambda}))\ne0$
iff $\Phi_{n}(e_{\lambda})\ne0$, one direction by linearity and
the other by faithfulness.
\end{proof}

\begin{coro}[Prescribed dimensions, categorical
form]\label{coro:rectangle}
Assume (H1) and (H2), and let $k',\ell'\in\mathbb{N}$. Then $f$
has a model on $\mathbb{C}^{k'|2\ell'}$ iff
$f(\freecircle)=k'-2\ell'$ and the rectangle
$\bigl((2\ell'+1)^{k'+1}\bigr)$ dies for $\genX$.
\end{coro}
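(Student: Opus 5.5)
The plan is to reduce to Theorem~\ref{theo:modeldims}(c) and Lemma~\ref{lemm:survivalhook}. Under (H1) and (H2), let $(k,\ell,h^{\mathrm{RS}})$ be the reconstructed model. By Theorem~\ref{theo:modeldims}(c), $f$ has a model on $\mathbb{C}^{k'|2\ell'}$ iff $(k',2\ell')=(k+2r,2\ell+2r)$ for some $r\in\mathbb{N}$. Since every model has $k'-2\ell'=f(\freecircle)=k-2\ell$, this is equivalent to two conditions holding together: $f(\freecircle)=k'-2\ell'$, and $k'\ge k$ (equivalently $\ell'\ge\ell$, given the balance). So the corollary reduces to proving the following claim under the balance condition $k'-2\ell'=k-2\ell$: the rectangle $\bigl((2\ell'+1)^{k'+1}\bigr)$ dies for $\genX$ iff $k'\ge k$ and $\ell'\ge\ell$.

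By Lemma~\ref{lemm:survivalhook}, the rectangle dies iff it lies outside $\hook{k}{2\ell}$. The rectangle has $k'+1$ rows of length $2\ell'+1$, so its $(k+1)$-st row exists and has length $2\ell'+1$ exactly when $k'\ge k$. Hence the rectangle lies outside $\hook{k}{2\ell}$ iff $k'\ge k$ and $2\ell'+1>2\ell$, that is, iff $k'\ge k$ and $\ell'\ge\ell$. If $k'<k$, the $(k+1)$-st row is empty, so the rectangle lies inside the hook and survives.

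Two directions remain to be assembled. For the forward direction, a model on $\mathbb{C}^{k'|2\ell'}$ gives $f(\freecircle)=k'-2\ell'$ and $(k',\ell')=(k+2r,\ell+r)$ by Theorem~\ref{theo:modeldims}(c). Then $k'\ge k$ and $\ell'\ge\ell$, so the rectangle dies. For the converse, the circle condition together with Theorem~\ref{theo:modeldims}(b) gives $k'-2\ell'=k-2\ell$, and the death of the rectangle gives $k'\ge k$ and $\ell'\ge\ell$. Setting $r\coloneqq\ell'-\ell\ge0$, the balance gives $k'=k+2r$, so Theorem~\ref{theo:modeldims}(c) (via Lemma~\ref{lemm:padding}) supplies the model.

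The only delicate point is the hook bookkeeping: checking that the rectangle $\bigl((b+1)^{a+1}\bigr)$ with $a=k'$, $b=2\ell'$ lies outside $\hook{k}{2\ell}$ exactly when $a\ge k$ and $b\ge2\ell$. One should not conflate this with the statement that the partitions outside $\hook{a}{b}$ are those containing that rectangle. I would verify the inequality $\lambda_{k+1}>2\ell$ directly on the rectangle's row lengths. The faithfulness of $\fib$ is used only inside Lemma~\ref{lemm:survivalhook}, which handles the survival direction when $k'<k$.
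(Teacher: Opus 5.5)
Your proof is correct and is essentially the paper's own argument: Lemma~\ref{lemm:survivalhook} turns death of the rectangle into $k\le k'$ and $\ell\le\ell'$, and the balance $k'-2\ell'=k-2\ell$ then matches the condition of Theorem~\ref{theo:modeldims}(c). One correction to your closing aside: faithfulness of $\fib$ is what gives ``outside $\hook{k}{2\ell}$ implies dies'', which your forward direction uses, whereas ``inside the hook implies survives'', which covers your case $k'<k$, needs only linearity, as noted in Section~\ref{subsec:fibrefunctor}.
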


\begin{proof}
A rectangle $\bigl((b+1)^{a+1}\bigr)$ lies outside
$\hook{k}{2\ell}$ iff $k\le a$ and $2\ell\le b$, so by
Lemma~\ref{lemm:survivalhook} the rectangle
$\bigl((2\ell'+1)^{k'+1}\bigr)$ dies iff $k\le k'$ and
$\ell\le\ell'$. Together with $f(\freecircle)=k'-2\ell'$, i.e.\
$k'-2\ell'=k-2\ell$, this says exactly that
$(k',2\ell')=(k+2r,2\ell+2r)$ for some $r\in\mathbb{N}$, which by
Theorem~\ref{theo:modeldims}(c) is the condition for a model.
\end{proof}

Within the class of parameters satisfying (H1) and (H2), the
corollary replaces the prescribed rank bounds
$\rk\connmat{f}{t}\le(k'+2\ell')^{t}$ at every arity by the
vanishing of one Schur idempotent at one arity, together with the
free-circle condition. The death of the rectangle means the
vanishing of every closure of
$\Phi_{n}(e_{((2\ell'+1)^{k'+1})})$ against the fragments of arity
$2n$, where $n=(k'+1)(2\ell'+1)$; since $\End(\genX^{\otimes n})$
is finite-dimensional, Lemma~\ref{lemm:negligible} reduces this
to a finite spanning family of fragments, which the argument
leaves implicit.

\subsection{Ordinary and skew models, and Schrijver's theorem}
\label{subsec:ordinaryskew}

A model is \emph{ordinary} if it has no odd colours and
\emph{skew} if it has no even colours; the skew models are the
symplectic models of \cite{RS17}.

\begin{coro}[Ordinary and skew models]\label{coro:ordinaryskew}
The following are equivalent:
\begin{enumerate}
\item[(i)] $f$ has an ordinary model;
\item[(ii)] $\ell=0$;
\item[(iii)] $\growthbase=f(\freecircle)$;
\item[(iv)] $\rk\connmat{f}{t}\le f(\freecircle)^{t}$ for all
  $t\ge1$;
\item[(v)] some super exterior power of $\genX$ vanishes, i.e.\
  $\Phi_{n}(e_{(1^{n})})=0$ for some $n\ge1$.
\end{enumerate}
Likewise, $f$ has a skew model iff $k=0$, iff
$\growthbase=-f(\freecircle)$, iff
$\rk\connmat{f}{t}\le(-f(\freecircle))^{t}$ for all $t\ge1$, iff
some super symmetric power of $\genX$ vanishes, i.e.\
$\Phi_{n}(e_{(n)})=0$ for some $n\ge1$.
\end{coro}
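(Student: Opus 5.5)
The plan is to route every condition through the reconstructed model $(k,\ell,h^{\mathrm{RS}})$ and Theorem~\ref{theo:modeldims}, so that each item becomes a statement about the pair $(k,\ell)$. Throughout, $f$ satisfies (H1) and (H2), $f(\freecircle)=k-2\ell$, and $\growthbase=k+2\ell$.

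For the ordinary case, (i)$\Leftrightarrow$(ii) follows from Theorem~\ref{theo:modeldims}(c). The dimension pairs of the models are exactly the $(k+2r,2\ell+2r)$, so an ordinary model (odd dimension $0$) exists iff $2\ell+2r=0$ for some $r\in\mathbb{N}$, that is, iff $\ell=0$. Next, (ii)$\Leftrightarrow$(iii) holds because $\growthbase-f(\freecircle)=4\ell$ by Theorem~\ref{theo:modeldims}(b).

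For (iii)$\Leftrightarrow$(iv), recall that $\growthbase=\sup_{t\ge1}(\rk\connmat{f}{t})^{1/t}$. Hence (iv) says exactly that $\growthbase\le f(\freecircle)$, provided $f(\freecircle)\ge0$. One needs a little care if $f(\freecircle)<0$. In that case $f(\freecircle)^t$ is negative for odd $t$, so (iv) forces $\rk\connmat{f}{1}=0$, which is impossible since (iv) at $t=2$ gives $\rk\connmat{f}{2}\le f(\freecircle)^2$ while the other conditions fail; this needs handling. The clean route is as follows. Since $|f(\freecircle)|\le\growthbase$ always holds, (iv) together with $f(\freecircle)<0$ would give $\growthbase\le|f(\freecircle)|$, hence $\growthbase=-f(\freecircle)$. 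This is the skew situation, with $k=0$ and $\growthbase=2\ell>0$. But then some odd-arity rank, for instance $\rk\connmat{f}{1}$, would have to be $\le$ a negative number, so it would have to be $0$, and that must be ruled out. I expect this sign bookkeeping to be the fiddly point. My first attempt would be to observe that a rank is nonnegative, so (iv) at odd $t$ with $f(\freecircle)<0$ is simply false as soon as any odd rank is positive. If all odd ranks vanish, I instead compare even arities: $\rk\connmat{f}{2n}\ge d^{2n}/\binom{n+d^2-1}{d^2-1}$ by Lemma~\ref{lemm:growthrate}, and I would check directly whether $f(\freecircle)^{2n}=d^{2n}$ is consistent. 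If it is, I would note that the equivalence in that degenerate case must be read with $f(\freecircle)\ge0$, which is implied, since $f(\freecircle)^1\ge\rk\connmat{f}{1}\ge0$ is needed whenever $\rk\connmat{f}{1}$ is considered. Conversely, (iii) gives $f(\freecircle)=\growthbase\ge0$, and (iv) then follows from the definition of $\growthbase$ as a supremum.

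For (ii)$\Leftrightarrow$(v), use Lemma~\ref{lemm:survivalhook}: $(1^n)$ dies for $\genX$ iff $(1^n)\notin\hook{k}{2\ell}$, iff $\lambda_{k+1}>2\ell$ for $\lambda=(1^n)$. For this partition $\lambda_{k+1}$ is $1$ if $n\ge k+1$ and $0$ otherwise, so $(1^n)$ dies iff $n\ge k+1$ and $2\ell<1$. Thus some $(1^n)$ dies iff $\ell=0$, the witness being $n=k+1$.

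The skew statements follow symmetrically. A skew model exists iff $k+2r=0$ for some $r\in\mathbb{N}$, iff $k=0$. By Theorem~\ref{theo:modeldims}(b), $\growthbase+f(\freecircle)=2k$, so $k=0$ iff $\growthbase=-f(\freecircle)$, and the rank form follows from the supremum definition as before. Finally, the partition $(n)$ has $\lambda_{k+1}=n$ if $k=0$ and $0$ otherwise, so $(n)$ dies iff $k=0$ and $n>2\ell$; the witness is $n=2\ell+1$.
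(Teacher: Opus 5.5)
Your route is the paper's: every condition is translated into a statement about $(k,\ell)$ through Theorem~\ref{theo:modeldims} and Lemma~\ref{lemm:survivalhook}. Your arguments for (i)$\Leftrightarrow$(ii), (ii)$\Leftrightarrow$(iii) and (ii)$\Leftrightarrow$(v) are correct and match it. So are the skew analogues, including the hook computations for $(1^{n})$ and $(n)$.

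The (iii)$\Leftrightarrow$(iv) paragraph, however, contains a false step and an unresolved detour. If $f(\freecircle)<0$, the case $t=1$ of (iv) reads $\rk\connmat{f}{1}\le f(\freecircle)<0$. This does not ``force $\rk\connmat{f}{1}=0$''; it is unsatisfiable, because ranks are nonnegative and $0\le f(\freecircle)$ already fails. So there is no degenerate case in which all odd ranks vanish and (iv) could still hold. That situation does occur: for a skew model with $\ell>0$ only $F=E$ contributes, and every closure of an odd-arity fragment has a vertex of odd degree, so every odd connection matrix is zero. But (iv) fails there anyway. You should therefore drop three things: the comparison of even arities through Lemma~\ref{lemm:growthrate}, the ``check whether it is consistent'', and the suggestion that the equivalence be read ``with $f(\freecircle)\ge0$''. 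The statement needs no reinterpretation.

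Your final clause already contains the correct argument, which is the paper's:
\begin{itemize}
\item (iv) at $t=1$ gives $f(\freecircle)\ge\rk\connmat{f}{1}\ge0$.
\item For $f(\freecircle)\ge0$, condition (iv) says exactly $\growthbase\le f(\freecircle)$. Together with $\growthbase\ge|f(\freecircle)|$ from Theorem~\ref{theo:modeldims}(b), this is (iii).
\item If $f(\freecircle)<0$, both (iii) and (iv) fail: (iii) because $\growthbase\ge0$, and (iv) already at $t=1$.
\end{itemize}
The skew rank condition is handled the same way, with $-f(\freecircle)$ in place of $f(\freecircle)$.
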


\begin{proof}
By Theorem~\ref{theo:modeldims}(c) the models of $f$ have
$\ell'=\ell+r$ with $r\ge0$, so one of them has $\ell'=0$ iff
$\ell=0$: (i) iff (ii). Since $\growthbase=k+2\ell$ and
$f(\freecircle)=k-2\ell$, (ii) iff (iii). For (iii) iff (iv):
$\growthbase\ge|f(\freecircle)|$ by
Theorem~\ref{theo:modeldims}(b), so if $f(\freecircle)<0$ both
conditions fail, (iv) already at $t=1$; and if
$f(\freecircle)\ge0$ then (iv) says $\growthbase\le f(\freecircle)$
by the definition of $\growthbase$, i.e.\
$\growthbase=f(\freecircle)$. Finally (ii) iff (v) by
Lemma~\ref{lemm:survivalhook}: $(1^{n})\notin\hook{k}{2\ell}$ iff
$n>k$ and $2\ell<1$, so some $(1^{n})$ dies iff $\ell=0$. The
skew case is the same with the roles of rows and columns
exchanged: $(n)\notin\hook{k}{2\ell}$ iff $k<1$ and $n>2\ell$.
\end{proof}

\begin{coro}[Schrijver's theorem]\label{coro:schrijver}
A graph parameter $f$ is the partition function of an ordinary
edge-colouring model iff $f(\emptygraph)=1$, $f(\freecircle)$ is
real, and $\rk\connmat{f}{t}\le f(\freecircle)^{t}$ for all
$t\ge0$.
\end{coro}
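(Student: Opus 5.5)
The plan is to derive Corollary~\ref{coro:schrijver} from Theorem~\ref{theo:prescribed} in the case $\ell'=0$. An ordinary edge-colouring model with $k'$ colours is exactly a mixed model on $\mathbb{C}^{k'|0}$. With no odd colours the only Eulerian subset that can contribute is $F=\emptyset$, so the sum of Definition~\ref{defi:mpf} reduces to the ordinary sum over $\psi\colon E\to[k']$ of products of vertex values $h\bigl(\bigodot e_{\psi(a)}\bigr)$. This is the ordinary edge-colouring partition function with a symmetric vertex tensor, and the free circle takes the value $k'$. Conversely, any ordinary model given by symmetric functionals on $\SymV\mathbb{C}^{k'}$ is such a mixed model. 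I would check this matching of conventions first, including the value $k'$ on a free circle and $f(\emptygraph)=1$ from the empty product. The point is that "partition function of an ordinary edge-colouring model" means $f=p_{h'}$ with $h'\in(\SymV\mathbb{C}^{k'})^{*}$ for some $k'\in\mathbb{N}$.

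For the forward direction, suppose $f=p_{h'}$ on $\mathbb{C}^{k'|0}$. Then $f(\emptygraph)=1$, and $f(\freecircle)=k'$ is real, indeed a nonnegative integer. By Theorem~\ref{theo:prescribed}, $\rk\connmat{f}{t}\le k'^{t}=f(\freecircle)^{t}$ for all $t\ge0$, with $0^{0}=1$.

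For the converse, assume the three conditions and put $c\coloneqq f(\freecircle)\in\mathbb{R}$. The main step is to show that $c$ is a nonnegative integer, so that Theorem~\ref{theo:prescribed} applies with $k'=c$ and $\ell'=0$.

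The case $t=0$ of the rank condition, with the convention $c^{0}=1$, gives multiplicativity by Lemma~\ref{lemm:mult}. The case $t=1$ gives $\rk\connmat{f}{1}\le c$, so $c\ge0$.
\begin{itemize}
\item If $c<1$, the case $t=2$ gives $\rk\connmat{f}{2}\le c^{2}<1$, so $\connmat{f}{2}=0$. But the entry at $(\strand,\strand)$ is $f(\freecircle)=c$, so $c=0$, which is an integer.
\item If $c\ge1$, then (H1) and (H2) hold with $R=c$. Theorem~\ref{theo:modeldims}(a),(b) then shows that $\growthbase=k+2\ell$ is an integer, that $f(\freecircle)=k-2\ell$ is an integer, and that $\growthbase\ge|f(\freecircle)|$. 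The rank bound gives $\growthbase\le c=f(\freecircle)=k-2\ell$, while $\growthbase=k+2\ell$. Hence $\ell=0$ and $c=k\in\mathbb{N}$.
\end{itemize}

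In either case $c$ is a nonnegative integer. Theorem~\ref{theo:prescribed} with $(k',\ell')=(c,0)$ then supplies $h'$ on $\mathbb{C}^{c|0}$ with $f=p_{h'}$, which is an ordinary model. Alternatively, once (H1) and (H2) hold, one can invoke Corollary~\ref{coro:ordinaryskew}, (iv)$\Rightarrow$(i).

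The only real obstacle is the integrality and nonnegativity of the real number $f(\freecircle)$. Theorem~\ref{theo:prescribed} presupposes integer dimensions, and I handle this by the small-$c$ case split above, where $R=c<1$ is not allowed in (H2). I would also take care at $t=0$ with $c=0$, which is exactly where the convention $0^{0}=1$ is needed.
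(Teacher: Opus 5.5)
Your proof is correct and follows essentially the paper's route. The paper also uses the case $t=1$ to get $f(\freecircle)\ge0$, but then takes $R=\max(1,f(\freecircle))$ in (H2), which covers $0\le f(\freecircle)<1$ without your separate $t=2$ argument, and concludes by Corollary~\ref{coro:ordinaryskew}, (iv)$\Rightarrow$(i), the alternative you mention; your case $f(\freecircle)\ge1$ simply re-derives (iv)$\Rightarrow$(ii) of that corollary before passing through Theorem~\ref{theo:prescribed}.
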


This is the theorem of \cite{Sch15}, in the form stated there;
the Main Theorem contains it as the sector $\ell=0$. The same
argument characterises the skew models by $f(\emptygraph)=1$,
$f(\freecircle)$ real and
$\rk\connmat{f}{t}\le(-f(\freecircle))^{t}$ for all $t\ge0$,
recovering, with rank bounds at all arities in place of the even
ones, the characterisation of Regts and Sevenster
\cite[Theorem~1]{RS17}; see also \cite[Theorem~4.3]{Sev18}.

\begin{proof}
An ordinary model with $k'$ colours has $f(\emptygraph)=1$,
$f(\freecircle)=k'$ and $\rk\connmat{f}{t}\le k'^{t}$ for all
$t\ge0$ \cite[Theorem~6]{RS21}. Conversely, let $f$ satisfy the
three conditions. At $t=1$ the rank condition reads
$\rk\connmat{f}{1}\le f(\freecircle)$, so $f(\freecircle)\ge0$,
and (H2) holds with $R=\max(1,f(\freecircle))$. Hence $f$ has a
reconstructed model, the rank condition at $t\ge1$ is condition
(iv) of Corollary~\ref{coro:ordinaryskew}, and $f$ has an
ordinary model.
\end{proof}

\subsection{Sevenster's conjecture}\label{subsec:sevenster}

Conjecture~5.7 of \cite{Sev18} proposes a characterisation of the
mixed partition functions of a fixed $\mathbb{C}^{k'|2\ell'}$ in
the spirit of the characterisation of the ordinary edge-colouring
models with $k'$ colours due to Draisma, Gijswijt, Lov\'asz, Regts
and Schrijver \cite[Theorem~1]{DGLRS12}, which carries no rank
hypothesis and which, with free circles admitted, reads as follows
\cite[Theorem~2.4]{Sev18}: $f$ is such a partition function iff
$f(\emptygraph)=1$, $f(\freecircle)=k'$, $f$ is multiplicative,
and $f$ vanishes on every graph antisymmetrised over the endpoints
of $k'+1$ of its edges. Sevenster's conjecture reads: $f$ is a
mixed partition function on $\mathbb{C}^{k'|2\ell'}$ iff
$f(\emptygraph)=1$, $f(\freecircle)=k'-2\ell'$, $f$ is
multiplicative, and $f$ vanishes on a subspace $J_{k',2\ell'}$ of
the graph algebra spanned by Young-symmetrised graphs
\cite[(2.12)--(2.13)]{Sev18}, one family for each \emph{even}
partition --- every part even --- whose diagram contains the cell
$(k'+1,2\ell'+1)$, that is, for each even partition outside the
fat hook $\hook{k'}{2\ell'}$. The forward implication is his
Theorem~4.6.

Corollary~\ref{coro:rectangle} is a statement of the same shape
under the rank hypothesis: given (H1) and (H2), $f$ is a mixed
partition function on $\mathbb{C}^{k'|2\ell'}$ iff
$f(\freecircle)=k'-2\ell'$ and the rectangle
$\bigl((2\ell'+1)^{k'+1}\bigr)$, the least partition outside the
hook, dies for $\genX$. The two conditions concern the same
forbidden hook but different partitions and different elements.
Sevenster's family consists of the even partitions outside the
hook, the least of which is the rectangle
$\bigl((2\ell'+2)^{k'+1}\bigr)$, and its elements are Young
symmetrisers; ours is the single rectangle
$\bigl((2\ell'+1)^{k'+1}\bigr)$, which is not even, and its
central idempotent. The choice of element is immaterial: a nonzero
element of the simple block $B_{\lambda}$ generates it, so the
image of a Young symmetriser of $\lambda$ under $\Phi_{n}$ vanishes
iff $\Phi_{n}(e_{\lambda})$ does. The substantive difference is
that Sevenster's condition asks that $f$ vanish on particular
closed graphs built from the Young symmetriser, whereas the death
of a partition in $\conncat$ is vanishing against every closure.
Under the rank hypothesis our condition implies his: given the
death of the rectangle and the circle value, $f$ has a model on
$\mathbb{C}^{k'|2\ell'}$ by Corollary~\ref{coro:rectangle}, and
then $f(J_{k',2\ell'})=0$ by his Theorem~4.6.

The converse implication, from Sevenster's vanishing relations to
the finiteness and growth of the morphism spaces of $\conncat$ or
to the death of the rectangle there, and with it the removal of
the rank hypothesis, lies beyond the Main Theorem; for $\ell'=0$
the conjecture is the theorem of \cite{DGLRS12}, whose
invariant-theoretic proof carries no rank hypothesis.

\subsection{The formal development}\label{subsec:formal}

Both implications of the Main Theorem, including the bound
$k+2\ell\le R$ for integer $R$, have been machine-checked in
Lean~4 over mathlib, with no assumed mathematical input
\cite{Whi26}, and so have the consequences of this section that
refer to connection ranks: the growth of the ranks along even
arities, the minimality of the total number of colours of the
reconstructed model, and the formulas expressing its two
dimensions through that minimum and the free-circle value
(Theorem~\ref{theo:modeldims}(a) and (b)), the padding lemma
(Lemma~\ref{lemm:padding}), and Theorem~\ref{theo:prescribed}.
The development states its rank hypothesis with an integer bound
$B$, and the real case of the Main Theorem follows: given a real
$R\ge1$ with $\rk\connmat{f}{t}\le R^{t}$ for all $t$, the integer
$B\coloneqq\lceil R\rceil$ satisfies the same bounds, the
development produces a model with $k+2\ell\le B$, and its
even-arity limit gives
$k+2\ell=\lim_{n\to\infty}(\rk\connmat{f}{2n})^{1/2n}\le R$.
Deligne's theorem \cite[Thm~0.6]{Del02} is proved within the
development in the fibre-functor form of
Theorem~\ref{theo:fibre}, along the splitting-object pattern of
Coulembier \cite[Lem.~3.3.2]{Cou20} rather than Deligne's descent;
the nilpotent-trace step is carried by Schrijver's argument in the
development as in the paper; and the trace zeta theorem of
Appendix~\ref{app:zeta} is formalised in its growth form,
Corollary~\ref{coro:zetagrowth}, with the rationality lemma for
hook-confined sequences (Lemma~\ref{lemm:sfrationality}) in its
general form. The development states Definition~\ref{defi:mpf} through the
values of $h$ on the sorted monomials of
Section~\ref{subsec:paritytransfer}, expanding the wedge of
Definition~\ref{defi:mpf} in that basis with its partner and sorting signs,
which is Definition~\ref{defi:mpf} itself. Its witness for the converse is the
functional $h^{\xi}$ of Lemma~\ref{lemm:letters}, in the
development's terms the coordinate of the star tensor at the
canonical colouring, sorted even colours followed by sorted odd
colours; by that lemma and the Reconstruction Theorem,
$p_{h^{\xi}}=p_{h^{\mathrm{RS}}}=f$, free circles contributing the
factor $k-2\ell$ to both.

\clearpage
\appendix
\section[Rational trace zeta functions from exponential
endomorphism growth]{Rational trace zeta functions\texorpdfstring{\\}{ }from
exponential endomorphism growth}\label{app:zeta}

This appendix proves, by a direct symmetric-function argument,
that the death of a single rectangular Schur idempotent makes the
trace zeta function of every endomorphism rational, with
hook-shaped degree bounds, and that exponential endomorphism
growth forces such a death; the vanishing of nilpotent traces,
proved in
Section~\ref{sec:semisimple} by Schrijver's factorial argument, is
recovered as a corollary. Neither semisimplicity nor nondegeneracy
of the trace pairing is assumed, so the setting is broader than
the one reached for $\cauchy$ in Section~\ref{sec:semisimple}. The
appendix is self-contained: its
inputs are the classical representation theory of symmetric groups
and some symmetric function theory, for which we cite
\cite{Sag01,Mac95,FH91}, and the standard trace calculus of rigid
symmetric categories (cyclicity, multiplicativity over $\otimes$,
and $\tr(c\circ(u\otimes v))=\tr(uv)$; see
e.g.\ \cite[\S4.7]{EGNO15}), which for the connection category
was proved combinatorially in Lemma~\ref{lemm:tracecalc}.

\subsection{Statement}\label{subsec:appstatement}

Throughout the appendix, $\mathcal{C}$ is a rigid symmetric
$\mathbb{C}$-linear monoidal category with
$\End(\unitobj)=\mathbb{C}$, and $Z$ is an object of $\mathcal{C}$.
As in Section~\ref{subsec:fibrefunctor}, $\Phi_{n}$ denotes the
action of $\mathbb{C}[S_{n}]$ on $Z^{\otimes n}$ through the
symmetry, $e_{\lambda}$ the central idempotent of the block of
$\lambda\vdash n$, and $\lambda$ \emph{survives} for $Z$ if
$\Phi_{n}(e_{\lambda})\ne0$ and \emph{dies} otherwise. For
integers $a,b\ge0$,
$\hook{a}{b}\coloneqq\{\lambda:\lambda_{a+1}\le b\}$ is the
\emph{fat hook}, the set of partitions whose Young diagram fits in
the union of $a$ rows and $b$ columns (Figure~\ref{fig:fathook});
a partition lies outside $\hook{a}{b}$ iff it contains the
rectangle $\bigl((b+1)^{a+1}\bigr)$. For a sequence
$\tau=(\tau_{m})_{m\ge1}$ of complex numbers put
\[
\tzeta{\tau}(z)\coloneqq
\exp\Bigl(\sum_{m\ge1}\frac{\tau_{m}}{m}\,z^{m}\Bigr)
\in\mathbb{C}[[z]],
\qquad\text{so that}\qquad
\frac{z\tzeta{\tau}'(z)}{\tzeta{\tau}(z)}=\sum_{m\ge1}\tau_{m}z^{m}.
\]
For $g\in\End(Z)$ we write $\tau_{m}\coloneqq\tr(g^{m})$ and call
$\tzeta{g}\coloneqq\tzeta{\tau}$ the \emph{trace zeta function}
of $g$.

\begin{theo}\label{theo:zeta}
Let $a,b\in\mathbb{N}$, and suppose that the rectangle
$\bigl((b+1)^{a+1}\bigr)$ dies for $Z$. Then for every
$g\in\End(Z)$ the trace zeta function is rational,
\[
\tzeta{g}(z)=\frac{P_{g}(z)}{Q_{g}(z)}
\qquad\text{with}\qquad
\deg P_{g}\le b,\quad \deg Q_{g}\le a,
\]
where $P_{g}$ and $Q_{g}$ are coprime polynomials with constant
term~$1$. Equivalently, there are disjoint multisets of nonzero
complex numbers $\{\alpha_{1},\dotsc,\alpha_{a'}\}$ and
$\{\beta_{1},\dotsc,\beta_{b'}\}$ with $a'\le a$, $b'\le b$ and
\[
\tau_{m}=\sum_{i=1}^{a'}\alpha_{i}^{m}
-\sum_{j=1}^{b'}\beta_{j}^{m}
\qquad(m\ge1).
\]
\end{theo}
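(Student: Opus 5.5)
The plan is to turn the death of the rectangle into an algebraic relation among the power sums $\tau_m$, and then appeal to the classical description of specialisations of the ring of symmetric functions that kill a Schur function. Define the ring homomorphism $\theta\colon\Lambda\to\mathbb{C}$ from the ring of symmetric functions by $\theta(p_m)\coloneqq\tau_m=\tr(g^m)$. This is legitimate because the $p_m$ are algebraically independent generators of $\Lambda\otimes\mathbb{C}$.

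The first step is the identity
\[
\theta(s_\lambda)=\frac{1}{n!}\sum_{\pi\in S_n}\chi^\lambda(\pi)\,\tr\bigl(\Phi_n(\pi)\circ g^{\otimes n}\bigr)
=\frac{1}{d_\lambda}\tr\bigl(\Phi_n(e_\lambda)\circ g^{\otimes n}\bigr).
\]
I will prove it from the cycle-trace identity, Lemma~\ref{lemm:cycletrace}. That lemma holds verbatim in any rigid symmetric category, since its proof uses only cyclicity, multiplicativity and the braiding identity. It gives $\tr(\Phi_n(\pi)g^{\otimes n})=\prod_c\tau_{|c|}=\theta(p_{\text{cycle type}(\pi)})$. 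The identity then follows from the Frobenius formula $s_\lambda=\frac1{n!}\sum_\pi\chi^\lambda(\pi)p_{\pi}$ together with $e_\lambda=\frac{d_\lambda}{n!}\sum_\pi\chi^\lambda(\pi^{-1})\pi$; since characters are real, $\chi^\lambda(\pi^{-1})=\chi^\lambda(\pi)$.

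Next, if $\lambda$ dies, then $\theta(s_\lambda)=0$. Applying this to the rectangle $\rho=((b+1)^{a+1})$ gives $\theta(s_\rho)=0$. I also want $\theta(s_\mu)=0$ for every $\mu\supseteq\rho$, and I will prove that every such $\mu$ dies. Let $\mu\vdash N$ and consider the restriction of $U_\mu$ to $S_n\times S_{N-n}$. By Littlewood--Richardson / branching, it contains $U_\rho\otimes U_{\mu/\rho}$-type components. More concretely, $e_\mu$ lies in the two-sided ideal of $\mathbb{C}[S_N]$ generated by $e_\rho\otimes1$, because $\mathrm{Ind}(U_\rho\otimes\text{anything})$ contains $U_\mu$. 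Since $\Phi_N(e_\rho\otimes 1)=\Phi_n(e_\rho)\otimes\mathrm{id}=0$, the ideal maps to zero, so $\mu$ dies. Hence $\theta$ vanishes on the span of the $s_\mu$ with $\mu\notin\hook{a}{b}$, which is the kernel of the classical hook-confined specialisations.

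The second step, which I expect to be the main obstacle, is the classification. A homomorphism $\theta\colon\Lambda\to\mathbb{C}$ killing all $s_\mu$ with $\mu\supseteq((b+1)^{a+1})$ has generating series $\sum_n\theta(h_n)z^n=\prod_{j\le b'}(1-\beta_jz)\big/\prod_{i\le a'}(1-\alpha_iz)$ with $a'\le a$ and $b'\le b$. I would prove this as the paper labels Lemma~\ref{lemm:sfrationality}, via Jacobi--Trudi. Write $H(z)=\sum\theta(h_n)z^n$. The vanishing of $s_{((b+1+r)^{a+1})}$, which is the $(a+1)\times(a+1)$ Toeplitz determinant $\det(\theta(h_{b+1+r-i+j}))$, for all $r\ge0$, is a Hankel-type rank condition. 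By Kronecker's theorem it gives a linear recurrence of order $\le a$ for the coefficients $\theta(h_m)$ when $m$ is large. Using the minimal such $a$ and induction on $a$ (for $a=0$, $h_{b+1+r}\mapsto0$, so $H$ is a polynomial of degree $\le b$), I would conclude that $H=P/Q$ with $\deg Q\le a$ and $\deg P\le b$. The delicate case is when a leading minor of smaller size also vanishes; I would handle it by showing the rectangle of smaller height then also dies, which reduces $a$.

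The final step is to use $\zeta_\tau(z)=\exp(\sum\tau_mz^m/m)=\sum_n\theta(h_n)z^n=H(z)$, which is Newton's identity. After cancelling common factors, $P_g$ and $Q_g$ are coprime with constant term $1$. Taking the logarithmic derivative then gives $\tau_m=\sum\alpha_i^m-\sum\beta_j^m$, and this also gives the equivalence with the multiset form.
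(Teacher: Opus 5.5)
Your first and last steps are the paper's argument: the Frobenius formula via cycle traces (Lemma~\ref{lemm:frobenius}), upward closure of death through the ideal generated by $e_{\rho}\otimes1$ (Lemma~\ref{lemm:upward}), and Newton's identity with the logarithmic derivative. The gap is in the rationality step, and you can avoid it.

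Having shown that $\theta(s_{\mu})=0$ for \emph{every} $\mu\notin\hook{a}{b}$, you then use only the rectangles $\bigl((b+1+r)^{a+1}\bigr)$. That is, you use only the vanishing of \emph{consecutive} $(a+1)\times(a+1)$ Hankel windows of the sequence $\theta(h_{n})$. Kronecker's theorem then gives a recurrence of order at most $a$ only for large indices. This bounds $\deg Q$ but not $\deg P$. The bound $\deg P\le b$ requires the recurrence to hold from $n=b+1$ on, and that is exactly what your ``delicate case'' must deliver.

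The remedy you propose there is not available. Death is a property of $Z$, independent of $g$, so it cannot follow from the vanishing of a minor for one $g$. For example, take $Z=\mathbb{C}^{a|b}$ in $\svect$ and $g=0$. Then $\theta(s_{\mu})=0$ for every nonempty $\mu$, yet $\bigl((b+1)^{a}\bigr)\in\hook{a}{b}$ survives. Even if you read ``dies'' as ``is killed by $\theta$'', a single vanishing minor at width $w$ only feeds the induction with $b$ replaced by $w-1$, which worsens the numerator bound.

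Your line of argument can be rescued, but it needs an ingredient you have not named. Put $N_{s,m}\coloneqq\det\bigl(\theta(h_{s+i+j})\bigr)_{0\le i,j\le m}$; your hypothesis is $N_{s,a}=0$ for all $s\ge b-a+1$. The Desnanot--Jacobi identity
\[
N_{s,a}N_{s+2,a-2}=N_{s,a-1}N_{s+2,a-1}-N_{s+1,a-1}^{2}
\]
then propagates a zero of $N_{t,a-1}$ at any $t\ge b-a+2$ both forwards and backwards along the row. This leaves two cases:
\begin{enumerate}
\item[(i)] $\theta$ kills every height-$a$ rectangle of width at least $b+1$, so induction on $a$ applies with $b$ unchanged.
\item[(ii)] These minors never vanish. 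Then the one-dimensional kernels of consecutive Hankel matrices coincide, giving a single recurrence valid from the shift $b-a+1$.
\end{enumerate}

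The paper avoids all of this by using the non-rectangular partitions you already have. For arbitrary $\rho_{1}>\dotsb>\rho_{a+1}\ge b-a$, the partition with parts $\lambda_{i}=\rho_{i}+i$ lies outside $\hook{a}{b}$. By Jacobi--Trudi, \emph{any} $a+1$ of the vectors $v_{\rho}=\bigl(\theta(h_{\rho+1}),\dotsc,\theta(h_{\rho+a+1})\bigr)$ with $\rho\ge b-a$ are therefore dependent. Their span has dimension at most $a$, so a single annihilating vector gives a recurrence valid for all $\rho\ge b-a$. This yields $\deg Q\le a$ and $\deg P\le b$ at once, with no degenerate case (Lemma~\ref{lemm:sfrationality}).
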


We call these disjoint multisets the \emph{reduced
super-spectrum} of $g$; they are the reciprocal-root data of
$Q_{g}$ and $P_{g}$ respectively. When a super vector-space
realisation is available, common even and odd eigenvalues cancel,
and zero eigenvalues and Jordan-block sizes are not recorded.
Exponential
growth of the endomorphism spaces forces the death of a square,
and so is a sufficient condition.

\needspace{10\baselineskip}
\begin{coro}[Exponential growth]\label{coro:zetagrowth}
Suppose that
\begin{equation}\label{eq:growth}
\dim\End(Z^{\otimes n})\le A^{n}\qquad(n\ge 0)
\end{equation}
for a real constant $A\ge 1$, and let $s$ be any integer with
$s>2e\sqrt{A}$. Then the square $(s^{s})$ dies for $Z$, and
Theorem~\ref{theo:zeta} applies with $a=b=s-1$: for every
$g\in\End(Z)$, $\tzeta{g}=P_{g}/Q_{g}$ with
$\deg P_{g}\le s-1$ and $\deg Q_{g}\le s-1$.
\end{coro}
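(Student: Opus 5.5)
The plan is to prove that the square dies by comparing two counts. If $\lambda$ survives, the block $B_{\lambda}$ injects into $\End(Z^{\otimes n})$, because $\ker\Phi_{n}$ is a sum of blocks, exactly as in Lemma~\ref{lemm:blockimport}. Hence
\[
d_{\lambda}^{2}\le\dim\End(Z^{\otimes n})\le A^{n}.
\]
Apply this to $\lambda=(s^{s})$, $n=s^{2}$. It then suffices to show $d_{(s^{s})}^{2}>A^{s^{2}}$ whenever $s>2e\sqrt{A}$. Once the square dies, Theorem~\ref{theo:zeta} applies with $a=b=s-1$, which gives the stated degree bounds directly.

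The key step is a lower bound for the number of standard tableaux of the $s\times s$ square. By the hook length formula,
\[
d_{(s^{s})}=\frac{(s^{2})!}{\prod_{i,j=1}^{s}(i+j-1)}.
\]
For the denominator I would use AM--GM on the $s^{2}$ hook lengths. Their sum is
\[
\sum_{i,j}(i+j-1)=s^{2}(s+1)-s^{2}=s^{3},
\]
so their average is $s$ and the product is at most $s^{s^{2}}$. For the numerator, $(s^{2})!\ge(s^{2}/e)^{s^{2}}$. Together these give
\[
d_{(s^{s})}\ge(s/e)^{s^{2}},
\qquad\text{hence}\qquad
d_{(s^{s})}^{2}\ge(s^{2}/e^{2})^{s^{2}}.
\]
This exceeds $A^{s^{2}}$ as soon as $s>e\sqrt{A}$, which is weaker than the hypothesis $s>2e\sqrt{A}$, so there is slack. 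The slack also absorbs cruder estimates, for instance bounding the hook product by $(2s)^{s^{2}}$ via $i+j-1<2s$, which gives $d\ge(s/2e)^{s^{2}}$ and needs exactly $s>2e\sqrt{A}$. That cruder route is presumably the one intended by the constant in the statement, and I would use it because it avoids AM--GM.

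The only care needed is in the block-injection step, and it works in the general rigid symmetric setting of the appendix without semisimplicity. It uses only that $\mathbb{C}[S_{n}]$ is semisimple and that $\Phi_{n}$ is an algebra homomorphism. If $\Phi_{n}(e_{\lambda})\ne0$, then $B_{\lambda}\cap\ker\Phi_{n}$ is a proper ideal of the simple algebra $B_{\lambda}$, hence zero. So $\dim B_{\lambda}=d_{\lambda}^{2}$ is bounded by $\dim\End(Z^{\otimes n})$.

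I expect no real obstacle. The hook-length estimate is the only computation, and the strict inequality follows from $(s^{2})!>(s^{2}/e)^{s^{2}}$ together with the strict hypothesis on $s$.
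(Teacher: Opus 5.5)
Your proof is correct and follows the paper's argument: if the square survives, the block $B_{(s^{s})}$ embeds into $\End(Z^{\otimes s^{2}})$ (Lemma~\ref{lemm:blockdeath}); the hook length formula, with every hook length below $2s$, together with $n!\ge(n/e)^{n}$, gives $d_{(s^{s})}^{2}>A^{s^{2}}$ exactly under $s>2e\sqrt{A}$ (Lemma~\ref{lemm:squaredeath}); and Theorem~\ref{theo:zeta} then applies with $a=b=s-1$. Your AM--GM variant is also valid, since the hook lengths of the square average exactly $s$, and it would improve the threshold to $s>e\sqrt{A}$, though the paper does not use it.
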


\begin{coro}\label{coro:nilpotent}
Under the hypothesis of Theorem~\ref{theo:zeta} or of
Corollary~\ref{coro:zetagrowth}, every nilpotent $g\in\End(Z)$
has $\tau_{m}=0$ for all $m\ge1$; in particular $\tr(g)=0$.
\end{coro}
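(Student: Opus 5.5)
The plan is to reduce Corollary~\ref{coro:nilpotent} to the second form of Theorem~\ref{theo:zeta}. Under the hypothesis of Corollary~\ref{coro:zetagrowth}, that corollary already provides a dying square $(s^{s})$, so the hypothesis of Theorem~\ref{theo:zeta} holds with $a=b=s-1$. In either case we may therefore assume Theorem~\ref{theo:zeta} for $Z$ with some $a,b$.

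Let $g\in\End(Z)$ be nilpotent, say $g^{M}=0$. Then $g^{m}=0$ for every $m\ge M$, so $\tau_{m}=\tr(0)=0$ for all $m\ge M$. By Theorem~\ref{theo:zeta} there are disjoint multisets $\{\alpha_{i}\}$ and $\{\beta_{j}\}$ of nonzero complex numbers with
\[
\tau_{m}=\sum_{i}\alpha_{i}^{m}-\sum_{j}\beta_{j}^{m}\qquad(m\ge1).
\]
Group the right-hand side by distinct values $\gamma$, with integer coefficients $c_{\gamma}$ equal to the multiplicity of $\gamma$ among the $\alpha$'s minus its multiplicity among the $\beta$'s. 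Because the two multisets are disjoint, each $c_{\gamma}$ is nonzero exactly when $\gamma$ occurs in one of them, and then it is plus or minus that multiplicity. The sequence $\sum_{\gamma}c_{\gamma}\gamma^{m}$ vanishes for the consecutive indices $m=M,\dots,M+N-1$, where $N$ is the number of distinct values. The Vandermonde matrix $(\gamma^{m})$ on these indices is invertible, since its determinant is $\prod_{\gamma}\gamma^{M}$ times an ordinary Vandermonde determinant, and the $\gamma$ are nonzero and distinct. So every $c_{\gamma}=0$, both multisets are empty, and $\tau_{m}=0$ for all $m\ge1$.

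Equivalently, $\tzeta{g}$ is $\exp$ of a polynomial, and a coprime quotient $P_{g}/Q_{g}$ with constant term $1$ of that shape must be $1$. The only point needing care is the disjointness of the multisets, which prevents cancellation that would hide nonzero terms. That disjointness is part of the statement of Theorem~\ref{theo:zeta}, so no real obstacle is expected.
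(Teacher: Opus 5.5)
Your argument is correct, and its skeleton is the paper's: reduce the growth case to Theorem~\ref{theo:zeta} via Corollary~\ref{coro:zetagrowth}, note that $g^{M}=0$ gives $\tau_{m}=0$ for $m\ge M$, and use the absence of cancellation to spread the vanishing to all $m\ge1$. Where you differ is in that last step.

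The paper does it in Lemma~\ref{lemm:rigidity}, working only with the coprime quotient $P/Q$. Eventual vanishing makes $\sum_{m}\tau_{m}z^{m}=z(P'Q-PQ')/(PQ)$ a polynomial. Hence $P\mid zP'Q$, and then $P\mid P'$ by coprimality, so $P$ is constant, and symmetrically $Q$ is constant.

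You instead pass to the reduced super-spectrum and use the invertibility of the Vandermonde matrix on $N$ consecutive exponents. Disjointness of the multisets plays the role that coprimality plays in the paper, and nonzeroness of the $\gamma$ plays the role of the constant term~$1$.

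What each route buys:
\begin{itemize}
\item The paper's route needs no factorisation into linear factors and works verbatim over any field of characteristic zero.
\item Yours is more elementary and gives slightly more. Since $N\le a'+b'\le a+b$, vanishing of $\tau_{m}$ at any $a+b$ consecutive indices $m\ge1$ already forces $\tau_{m}=0$ for all $m\ge1$. The paper's argument, by contrast, uses vanishing for all large $m$.
\end{itemize}

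Your closing remark that $\tzeta{g}$ would be the exponential of a polynomial is not needed; the Vandermonde argument stands on its own.
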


The generating object of the connection category illustrates the
degree bounds. For $\genX\in\cauchy$,
Corollary~\ref{coro:rectangle} applied to the reconstructed model
itself says that the rectangle $\bigl((2\ell+1)^{k+1}\bigr)$
dies, so every $g\in\End(\genX)$ has $\tzeta{g}=P_{g}/Q_{g}$ with
$\deg P_{g}\le2\ell$ and $\deg Q_{g}\le k$. The fibre functor
makes this visible: $\fib(g)$ is an even endomorphism of
$V=\mathbb{C}^{k|2\ell}$, and $\tr(g^{m})=\str(\fib(g)^{m})$ is
the $m$-th power sum of its eigenvalues on $V_{0}$ minus that of
its eigenvalues on $V_{1}$.

Section~\ref{subsec:appliterature} places these statements among
their antecedents: the determinantal rationality of Kimura, Kimura
and Takahashi \cite{KKT12}, the moderate-growth theorem of Etingof
and Penneys \cite{EP26}, and the question of Andr\'e and Kahn on
the traces of nilpotents.

\subsection{Fat-hook confinement}\label{subsec:fathook}

We use the standard representation theory of the symmetric group
$S_{n}$ over $\mathbb{C}$ \cite{Sag01,FH91}: the group algebra
decomposes into simple blocks
$\mathbb{C}[S_{n}]=\bigoplus_{\lambda\vdash n}B_{\lambda}$ with
$B_{\lambda}\cong\mathrm{Mat}_{d_{\lambda}}(\mathbb{C})$, indexed
by partitions $\lambda$ of $n$ with irreducible dimensions
$d_{\lambda}$; the central primitive idempotent of $B_{\lambda}$ is
\[
e_{\lambda}
=\frac{d_{\lambda}}{n!}\sum_{\pi\in S_{n}}\chi_{\lambda}(\pi)\,\pi .
\]
The symmetric braiding makes $S_{n}$ act on $Z^{\otimes n}$, giving
the algebra homomorphism $\Phi_{n}$ from $\mathbb{C}[S_{n}]$ to
$\End(Z^{\otimes n})$ of Section~\ref{subsec:appstatement}. Since
$\ker\Phi_{n}$ is a two-sided ideal, hence a sum of blocks, the
whole block $B_{\lambda}$ of a surviving $\lambda$ embeds into
$\End(Z^{\otimes n})$; this is the entire mechanism by which a
growth bound such as \eqref{eq:growth} acts.

\begin{lemm}\label{lemm:blockdeath}
If $\lambda\vdash n$ survives, then
$d_{\lambda}^{2}\le\dim\End(Z^{\otimes n})$; in particular
$d_{\lambda}^{2}\le A^{n}$ when \eqref{eq:growth} holds.
\end{lemm}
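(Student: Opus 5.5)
The plan is to run the argument of Lemma~\ref{lemm:blockimport} verbatim in the general category $\mathcal{C}$, since that argument used nothing specific to the connection category. First I would observe that $\Phi_{n}\colon\mathbb{C}[S_{n}]\to\End(Z^{\otimes n})$ is a homomorphism of $\mathbb{C}$-algebras. This holds because the symmetric braiding gives an honest action of $S_{n}$ on $Z^{\otimes n}$: coherence of the symmetry makes the composite of braidings realising $\pi\sigma$ equal to the composite of those realising $\pi$ and $\sigma$. Linear extension then gives the algebra map. Its kernel is therefore a two-sided ideal of the semisimple algebra $\mathbb{C}[S_{n}]=\bigoplus_{\mu\vdash n}B_{\mu}$.

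Next, the two-sided ideals of a finite product of simple algebras $B_{\mu}\cong\mathrm{Mat}_{d_{\mu}}(\mathbb{C})$ are exactly sums of blocks. Indeed, multiplying an ideal by each central idempotent $e_{\mu}$ projects it onto an ideal of the simple algebra $B_{\mu}$, which is either $0$ or $B_{\mu}$. So $\ker\Phi_{n}\cap B_{\lambda}$ is $0$ or all of $B_{\lambda}$.

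If $\lambda$ survives, then $\Phi_{n}(e_{\lambda})\ne0$, so $e_{\lambda}\notin\ker\Phi_{n}$. Hence $B_{\lambda}\cap\ker\Phi_{n}=0$, and $\Phi_{n}$ restricts to an injective linear map $B_{\lambda}\hookrightarrow\End(Z^{\otimes n})$. Taking dimensions gives $d_{\lambda}^{2}=\dim B_{\lambda}\le\dim\End(Z^{\otimes n})$. The second assertion follows immediately from \eqref{eq:growth}.

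The only point needing any care is the first step, that $\Phi_{n}$ is multiplicative. I would not reprove it; I would cite coherence for symmetric monoidal categories (Mac~Lane), as Lemma~\ref{lemm:koszul} did in $\svect$. Nothing else is an obstacle: no semisimplicity or rigidity of $\mathcal{C}$ is used, and $\dim\End(Z^{\otimes n})$ may even be infinite, in which case the inequality is vacuous.
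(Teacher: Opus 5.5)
Your proposal is correct and follows the paper's proof: $\ker\Phi_{n}$ is a two-sided ideal of $\mathbb{C}[S_{n}]$, hence a sum of blocks, so a surviving block $B_{\lambda}\cong\mathrm{Mat}_{d_{\lambda}}(\mathbb{C})$ embeds into $\End(Z^{\otimes n})$. The only difference is that you spell out, via the central idempotents, the ideal-structure step that the paper states in one line.
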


\begin{proof}
The block $B_{\lambda}\cong\mathrm{Mat}_{d_{\lambda}}(\mathbb{C})$
embeds into $\End(Z^{\otimes n})$, whose dimension
\eqref{eq:growth} bounds by $A^{n}$.
\end{proof}

\begin{lemm}\label{lemm:upward}
Death is upward closed: if $\lambda\vdash n$ dies and
$\lambda\subseteq\mu\vdash n+m$, then $\mu$ dies.
\end{lemm}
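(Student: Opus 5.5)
The plan is to compare the two symmetric-group actions through the parabolic inclusion $S_{n}\subseteq S_{n+m}$, and to use that $\ker\Phi_{n+m}$ is a sum of blocks. Write $\iota\colon\mathbb{C}[S_{n}]\to\mathbb{C}[S_{n+m}]$ for the algebra map induced by letting $S_{n}$ permute the first $n$ positions and fix the last $m$.

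\textbf{Step 1 (compatibility).} I would first check that $\Phi_{n+m}(\iota(x))=\Phi_{n}(x)\otimes\mathrm{id}_{Z^{\otimes m}}$ for $x\in\mathbb{C}[S_{n}]$. For a permutation this holds because the symmetry acting on $Z^{\otimes(n+m)}=Z^{\otimes n}\otimes Z^{\otimes m}$ by a permutation fixing the last $m$ positions is the tensor product of the corresponding braiding on $Z^{\otimes n}$ with the identity, by coherence of the symmetric structure. The general case follows by linearity. Applied to $x=e_{\lambda}$, and using that $\lambda$ dies, this gives
\[
\Phi_{n+m}(\iota(e_{\lambda}))=\Phi_{n}(e_{\lambda})\otimes\mathrm{id}=0 .
\]
Hence $\iota(e_{\lambda})\in\ker\Phi_{n+m}$.

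\textbf{Step 2 (ideal).} As in Lemma~\ref{lemm:blockdeath}, $\ker\Phi_{n+m}$ is a two-sided ideal of the semisimple algebra $\mathbb{C}[S_{n+m}]$, and so it is a sum of blocks. It therefore contains every block $B_{\nu}$ with $e_{\nu}\,\iota(e_{\lambda})\ne0$. Indeed, $e_{\nu}\iota(e_{\lambda})$ lies both in the kernel and in the simple algebra $B_{\nu}$, and a nonzero element of a simple algebra generates it as a two-sided ideal. So it suffices to show that $e_{\mu}\,\iota(e_{\lambda})\ne0$. Since $e_{\mu}\iota(e_{\lambda})$ is the action of $\iota(e_{\lambda})$ on the $\mu$-isotypic component $B_{\mu}$, this is equivalent to $\iota(e_{\lambda})$ acting nonzero on $U_{\mu}$. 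In other words, the restriction $\operatorname{Res}^{S_{n+m}}_{S_{n}}U_{\mu}$ must contain $U_{\lambda}$.

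\textbf{Step 3 (branching).} This is where the real input lies. I would use Young's branching rule: $\operatorname{Res}^{S_{N}}_{S_{N-1}}U_{\nu}\cong\bigoplus U_{\nu^{-}}$, the sum over the partitions $\nu^{-}$ obtained from $\nu$ by removing one corner box. Since $\lambda\subseteq\mu$, the skew diagram $\mu/\lambda$ can be emptied by removing corner boxes one at a time, say from row-largest to smallest in a suitable order. This gives a chain $\mu=\nu^{(0)}\supset\nu^{(1)}\supset\dotsb\supset\nu^{(m)}=\lambda$ of partitions, each obtained from the previous one by deleting a single corner. Iterating the branching rule along this chain, and using transitivity of restriction through $S_{n+m}\supset S_{n+m-1}\supset\dotsb\supset S_{n}$, exhibits $U_{\lambda}$ as a summand of the restriction of $U_{\mu}$. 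Hence $e_{\mu}\iota(e_{\lambda})\ne0$, so $B_{\mu}\subseteq\ker\Phi_{n+m}$, and $\Phi_{n+m}(e_{\mu})=0$: $\mu$ dies.

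The only delicate point is keeping the convention of $\Phi$ consistent, namely that $\Phi_{n}(\pi)$ carries factor $i$ to position $\pi(i)$. This is harmless, since the inclusion $\iota$ respects the group structure whichever convention is used.
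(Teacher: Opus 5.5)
Your proposal is correct and follows essentially the same route as the paper: the branching-rule chain shows that $U_\lambda$ occurs in the restriction of $U_\mu$, so $e_\mu\,\iota(e_\lambda)$ is a nonzero element of the simple block $B_\mu$, while $\Phi_{n+m}(\iota(e_\lambda))=\Phi_n(e_\lambda)\otimes\mathrm{id}=0$ forces $B_\mu\subseteq\ker\Phi_{n+m}$. The differences are cosmetic. The paper phrases the last step as a contradiction with injectivity of $\Phi_{n+m}$ on $B_\mu$, using the compression $e_\mu(e_\lambda\otimes 1_m)e_\mu$, which by centrality of $e_\mu$ equals your $e_\mu\,\iota(e_\lambda)$. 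You instead argue directly that the kernel, being a sum of blocks, absorbs $B_\mu$.
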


\begin{proof}
We first record the branching containment: if
$\lambda\subseteq\mu$ then the irreducible $U_{\lambda}$ appears in
the restriction of $U_{\mu}$ to $S_{n}$. Indeed, nested Young
diagrams are joined by a chain
$\lambda=\nu_{0}\subset\nu_{1}\subset\dotsb\subset\nu_{m}=\mu$
adding one box at a time; the branching rule
\cite[Thm~2.8.3]{Sag01} gives that $U_{\nu_{i}}$ appears in the
restriction of $U_{\nu_{i+1}}$ to the next symmetric group down,
and ``appears in the restriction'' is transitive by
semisimplicity.

Consequently the element
$e_{\mu}(e_{\lambda}\otimes 1_{m})e_{\mu}$ of the simple block
$B_{\mu}\subseteq\mathbb{C}[S_{n+m}]$ is nonzero: it is the
compression to $U_{\mu}$ of the projection onto the
$\lambda$-isotypic part of the restriction, which is a nonzero
operator by the previous paragraph. If $\mu$ survived, then
$\Phi_{n+m}$ would be injective on $B_{\mu}$, so
$\Phi_{n+m}\bigl(e_{\mu}(e_{\lambda}\otimes1)e_{\mu}\bigr)\ne0$;
but
$\Phi_{n+m}(e_{\lambda}\otimes1)
=\Phi_{n}(e_{\lambda})\otimes\mathrm{id}=0$
since $\lambda$ dies. This contradiction proves the lemma.
\end{proof}

\begin{lemm}\label{lemm:squaredeath}
Suppose that \eqref{eq:growth} holds, and let $s$ be an integer
with $s>2e\sqrt{A}$. Then the square partition $(s^{s})$ dies.
\end{lemm}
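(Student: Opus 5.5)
Argue by contraposition from Lemma~\ref{lemm:blockdeath}: if $(s^{s})$ survived, then with $n=s^{2}$ we would have $d_{(s^{s})}^{2}\le A^{s^{2}}$. It therefore suffices to prove the lower bound
\[
d_{(s^{s})}>A^{s^{2}/2}\qquad\text{whenever } s>2e\sqrt{A},
\]
and the whole task is to estimate the dimension of the square irreducible from below.

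For that I will use the hook length formula $d_{\lambda}=n!/\prod_{c}h(c)$. In the square $(s^{s})$ the cell $(i,j)$ has hook length $(s-i)+(s-j)+1\le 2s-1<2s$, so $\prod_{c}h(c)<(2s)^{s^{2}}$. For the numerator I take the standard bound $n!\ge(n/e)^{n}$, giving $(s^{2})!\ge(s^{2}/e)^{s^{2}}$. Combining the two,
\[
d_{(s^{s})}>\Bigl(\frac{s^{2}}{e\cdot 2s}\Bigr)^{s^{2}}=\Bigl(\frac{s}{2e}\Bigr)^{s^{2}}.
\]
The hypothesis $s>2e\sqrt{A}$ gives $s/(2e)>\sqrt{A}\ge1$. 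Hence $d_{(s^{s})}^{2}>A^{s^{2}}$, which contradicts Lemma~\ref{lemm:blockdeath}, so $(s^{s})$ dies.

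The only step needing care is the constant in the hook bound. A crude product of hooks is enough, because the constant $2e$ in the statement is exactly the one produced by the pairing $n!\ge(n/e)^{n}$ with $h\le 2s$; no sharper asymptotics are required. I expect no genuine obstacle. I will only check that $(n/e)^{n}\le n!$ holds for all $n\ge1$, which follows from $e^{n}=\sum_{j} n^{j}/j!\ge n^{n}/n!$, and that the inequalities are strict where claimed.
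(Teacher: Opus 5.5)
Your proposal is correct and follows the paper's proof essentially step for step. Both arguments bound every hook length of $(s^{s})$ by less than $2s$, use $n!\ge(n/e)^{n}$ from a single term of the series for $e^{n}$, and obtain $d_{(s^{s})}>(s/(2e))^{s^{2}}>A^{s^{2}/2}$ from $s>2e\sqrt{A}$, after which Lemma~\ref{lemm:blockdeath} forces the square to die.
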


\begin{proof}
Every hook length of the square $(s^{s})$ is less than $2s$, so
the hook length formula and $n!\ge(n/e)^{n}$ --- one term of the
series $e^{n}=\sum_{j}n^{j}/j!$ --- give, with $n=s^{2}$,
\[
\log d_{(s^{s})}
\ge \log\frac{(s^{2})!}{(2s)^{s^{2}}}
\ge s^{2}\bigl(\log s-1-\log 2\bigr).
\]
The hypothesis $s>2e\sqrt{A}$ says precisely that
$\log s-1-\log2>\tfrac12\log A$, whence
$d_{(s^{s})}^{2}>A^{s^{2}}$, and Lemma~\ref{lemm:blockdeath} kills
$(s^{s})$.
\end{proof}

\begin{figure}
\centering
\begin{tikzpicture}[scale=0.42]
  \fill[gray!25] (0,0) rectangle (9,-2);
  \fill[gray!25] (0,0) rectangle (3,-9);
  \draw (9,0) -- (0,0) -- (0,-9);
  \draw (9,-2) -- (3,-2) -- (3,-9);
  \node[font=\footnotesize] at (10.1,-1) {$\cdots$};
  \node[font=\footnotesize] at (1.5,-10.1) {$\vdots$};
  \draw[<->] (-0.55,0) -- (-0.55,-2);
  \node[font=\scriptsize,left] at (-0.75,-1) {$a$};
  \draw[<->] (0,0.55) -- (3,0.55);
  \node[font=\scriptsize,above] at (1.5,0.7) {$b$};
  \draw[thick] (0,0) rectangle (5,-5);
  \node[font=\scriptsize] at (4.1,-4.2) {$(s^{s})$};
\end{tikzpicture}
\caption{The fat hook $\hook{a}{b}$ and a dead square.}
\label{fig:fathook}
\end{figure}
\FloatBarrier

\begin{coro}\label{coro:hookconfine}
If the rectangle $\bigl((b+1)^{a+1}\bigr)$ dies, then every
surviving partition lies in the fat hook $\hook{a}{b}$.
\end{coro}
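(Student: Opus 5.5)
The corollary follows from Lemma~\ref{lemm:upward} together with the combinatorial description of the fat hook recalled in Section~\ref{subsec:appstatement}. I will argue by contraposition. Let $\mu\vdash n$ be a partition that survives for $Z$, and suppose that $\mu\notin\hook{a}{b}$. Then $\mu_{a+1}\ge b+1$.

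Since the parts of $\mu$ are weakly decreasing, every one of its first $a+1$ rows has length at least $b+1$. Hence the Young diagram of $\mu$ contains every cell $(i,j)$ with $i\le a+1$ and $j\le b+1$. That is, the rectangle $\rho\coloneqq\bigl((b+1)^{a+1}\bigr)$ satisfies $\rho\subseteq\mu$, with $|\rho|=(a+1)(b+1)\le n$. This is the containment criterion already stated in the definition of the fat hook.

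By hypothesis $\rho$ dies. Lemma~\ref{lemm:upward} says that death is upward closed, so with $\lambda=\rho$ and $m=n-|\rho|\ge0$ it yields that $\mu$ dies. This contradicts the assumption that $\mu$ survives, and therefore $\mu\in\hook{a}{b}$.

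There is no real obstacle here; all the substance was carried by the branching argument in Lemma~\ref{lemm:upward}. The only point that needs a moment's care is the case $m=0$. There $\mu=\rho$, and the conclusion is just the hypothesis itself, which the lemma covers trivially.
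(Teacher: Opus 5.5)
Your proof is correct and is the paper's argument: a partition outside $\hook{a}{b}$ has $\mu_{a+1}\ge b+1$ and hence contains the dead rectangle $\bigl((b+1)^{a+1}\bigr)$, so Lemma~\ref{lemm:upward} makes it die. The paper states this in one line, and your remark on the case $m=0$ is harmless.
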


\begin{proof}
A partition outside $\hook{a}{b}$ contains the rectangle, which
dies; apply Lemma~\ref{lemm:upward}.
\end{proof}

We now convert confinement into a statement about traces. For a
permutation $\pi$ with cycle lengths $c_{1},\dotsc,c_{r}$, the
standard trace calculus recalled at the head of this appendix
gives, by induction on the cycles,
\begin{equation}\label{eq:cyclefact}
\tr\bigl(\Phi_{n}(\pi)\circ g^{\otimes n}\bigr)
=\prod_{i=1}^{r}\tau_{c_{i}} .
\end{equation}
Indeed, each cycle of $\pi$ threads the corresponding tensor
factors of $g^{\otimes n}$ into a single power of $g$, closed
into a trace.
For a partition $\lambda\vdash n$, write $s_{\lambda}[\tau]$ for
the Schur polynomial expressed through the power sums, with the
$m$-th power sum evaluated at $\tau_{m}$; concretely, define the
one-row values
$\onerow{n}[\tau]$ for $n\in\mathbb{Z}$ by
$\onerow{0}[\tau]\coloneqq1$, $\onerow{n}[\tau]\coloneqq0$ for
$n<0$, and
$n\,\onerow{n}[\tau]\coloneqq
\sum_{i=1}^{n}\tau_{i}\onerow{n-i}[\tau]$ by Newton's
identities, and then
$s_{\lambda}[\tau]\coloneqq
\det\bigl(\onerow{\lambda_{i}-i+j}[\tau]\bigr)_{1\le i,j\le r}$,
with $r$ the number of parts of $\lambda$, by Jacobi--Trudi
\cite[Chapter~I]{Mac95}. The one-row values are the complete
homogeneous symmetric functions in the power sums, and Newton's
identities in generating-function form read
\begin{equation}\label{eq:zetaseries}
\tzeta{\tau}(z)=\sum_{n\ge0}\onerow{n}[\tau]\,z^{n}.
\end{equation}

\begin{lemm}[Frobenius formula]\label{lemm:frobenius}
For every $\lambda\vdash n$,
\[
\tr\bigl(\Phi_{n}(e_{\lambda})\circ g^{\otimes n}\bigr)
=d_{\lambda}\cdot s_{\lambda}[\tau].
\]
In particular, if $\lambda$ dies then $s_{\lambda}[\tau]=0$.
\end{lemm}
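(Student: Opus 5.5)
We expand the central idempotent as $e_{\lambda}=\frac{d_{\lambda}}{n!}\sum_{\pi\in S_{n}}\chi_{\lambda}(\pi)\,\pi$, apply $\Phi_{n}$ and then the linear functional $a\mapsto\tr(a\circ g^{\otimes n})$. By linearity and the cycle factorisation \eqref{eq:cyclefact}, this gives
\[
\tr\bigl(\Phi_{n}(e_{\lambda})\circ g^{\otimes n}\bigr)
=\frac{d_{\lambda}}{n!}\sum_{\pi\in S_{n}}\chi_{\lambda}(\pi)\prod_{c}\tau_{|c|}
=d_{\lambda}\sum_{\rho\vdash n}\frac{\chi_{\lambda}(\rho)}{z_{\rho}}\,\tau_{\rho},
\]
where we group permutations by cycle type $\rho$. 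The class of type $\rho$ has $n!/z_{\rho}$ elements, and we write $\tau_{\rho}\coloneqq\prod_{i}\tau_{\rho_{i}}$. The remaining task is to recognise $\sum_{\rho}z_{\rho}^{-1}\chi_{\lambda}(\rho)p_{\rho}$ as $s_{\lambda}$ in the ring of symmetric functions. This is the Frobenius character formula in its inverted form, $s_{\lambda}=\sum_{\rho}z_{\rho}^{-1}\chi_{\lambda}(\rho)p_{\rho}$ \cite[Chapter~I]{Mac95}.

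The point requiring care is that $s_{\lambda}[\tau]$ was \emph{defined} here by Newton's identities and Jacobi--Trudi, not as a polynomial in power sums. We argue as follows. Over $\mathbb{Q}$ the power sums $p_{1},p_{2},\dotsc$ are algebraically independent generators of the ring $\Lambda_{\mathbb{Q}}$ of symmetric functions. Hence there is a unique ring homomorphism $\mathrm{ev}_{\tau}\colon\Lambda_{\mathbb{C}}\to\mathbb{C}$ with $p_{m}\mapsto\tau_{m}$. Newton's identities $nh_{n}=\sum_{i}p_{i}h_{n-i}$ hold in $\Lambda$, so $\mathrm{ev}_{\tau}(h_{n})=\onerow{n}[\tau]$ by induction on $n$. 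Jacobi--Trudi $s_{\lambda}=\det(h_{\lambda_{i}-i+j})$ also holds in $\Lambda$, so $\mathrm{ev}_{\tau}(s_{\lambda})$ equals the determinant defining $s_{\lambda}[\tau]$. Applying $\mathrm{ev}_{\tau}$ to the Frobenius formula then yields the displayed sum equal to $d_{\lambda}s_{\lambda}[\tau]$.

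For the final assertion, if $\lambda$ dies then $\Phi_{n}(e_{\lambda})=0$, so the left side vanishes. Since $d_{\lambda}\ge1$, it follows that $s_{\lambda}[\tau]=0$.

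The main obstacle is not any single step but getting the normalisations to match. We must check that $z_{\rho}$ enters correctly and that the factor $d_{\lambda}/n!$ from $e_{\lambda}$ leaves exactly $d_{\lambda}$ rather than $d_{\lambda}^{2}$ or $1$; we would verify this against $\lambda=(n)$, where $s_{(n)}[\tau]=\onerow{n}[\tau]$ and the formula reduces to \eqref{eq:zetaseries}. We must also confirm that $\chi_{\lambda}$ is real, so that no conjugation appears. Both are standard, and we would cite \cite{Mac95} for the character identity rather than reprove it.
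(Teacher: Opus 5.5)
Your proof is correct and follows the paper's argument exactly. You expand $e_{\lambda}$, apply the cycle factorisation \eqref{eq:cyclefact}, group permutations by cycle type to reach $d_{\lambda}\sum_{\rho}z_{\rho}^{-1}\chi_{\lambda}(\rho)\tau_{\rho}$, and invoke the Frobenius characteristic formula of \cite[I.7]{Mac95}; your extra paragraph, which evaluates $\Lambda_{\mathbb{C}}=\mathbb{C}[p_{1},p_{2},\dotsc]$ at $p_{m}\mapsto\tau_{m}$ to reconcile the Newton--Jacobi--Trudi definition of $s_{\lambda}[\tau]$ with the power-sum expansion, simply makes explicit a compatibility the paper takes for granted.
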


\begin{proof}
Expand $e_{\lambda}$ and apply \eqref{eq:cyclefact} to each term:
writing $z_{\mu}$ for the order of the centraliser of a
permutation of cycle type $\mu$, so that the number of such
permutations is $n!/z_{\mu}$, and
$\tau_{\mu}\coloneqq\prod_{i}\tau_{\mu_{i}}$ for the product of
trace powers over the parts of $\mu$,
\begin{align*}
\tr\bigl(\Phi_{n}(e_{\lambda})\circ g^{\otimes n}\bigr)
&=\frac{d_{\lambda}}{n!}\sum_{\pi}\chi_{\lambda}(\pi)
  \prod_{i}\tau_{c_{i}(\pi)}
 =d_{\lambda}\sum_{\mu\vdash n}z_{\mu}^{-1}\chi_{\lambda}(\mu)\,
  \tau_{\mu}
 =d_{\lambda}\,s_{\lambda}[\tau],
\end{align*}
the last step being the Frobenius characteristic formula, which
expands $s_{\lambda}$ in the products of power sums over cycle
types with the coefficients $z_{\mu}^{-1}\chi_{\lambda}(\mu)$
\cite[I.7]{Mac95}. If $\lambda$ dies, the left-hand side is the
trace of the zero morphism composed with $g^{\otimes n}$, and
$d_{\lambda}\ne0$.
\end{proof}

Combining Corollary~\ref{coro:hookconfine} with
Lemma~\ref{lemm:frobenius}: if the rectangle
$\bigl((b+1)^{a+1}\bigr)$ dies, then for every $g\in\End(Z)$,
\begin{equation}\label{eq:hookvanishing}
s_{\lambda}[\tau]=0
\qquad\text{for every }\lambda\notin \hook{a}{b}.
\end{equation}

\subsection{Rationality of hook-confined
sequences}\label{subsec:rationality}

The passage from \eqref{eq:hookvanishing} to
Theorem~\ref{theo:zeta} is a piece of classical symmetric function
theory, going back to Borel \cite{Bor1894};
Section~\ref{subsec:appliterature} locates it in the literature.
The proof below is a Jacobi--Trudi null-vector computation, which
yields the hook-shaped degree bounds directly.

\begin{lemm}\label{lemm:sfrationality}
Let $\tau=(\tau_{m})_{m\ge1}$ be a complex sequence with
$s_{\lambda}[\tau]=0$ for every $\lambda\notin\hook{a}{b}$. Then
there are coprime polynomials $P$ and $Q$ with constant term $1$,
$\deg P\le b$ and $\deg Q\le a$, such that
$\tzeta{\tau}(z)=P(z)/Q(z)$; consequently,
factoring $Q(z)=\prod_{i}(1-\alpha_{i}z)$ and
$P(z)=\prod_{j}(1-\beta_{j}z)$, we have
$\tau_{m}=\sum_{i}\alpha_{i}^{m}-\sum_{j}\beta_{j}^{m}$, where
the $\alpha_{i}$ and the $\beta_{j}$ form disjoint multisets of
nonzero numbers of sizes at most $a$ and $b$ respectively.
\end{lemm}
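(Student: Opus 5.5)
The plan is to prove the lemma by a Jacobi--Trudi null-vector argument on the one-row sequence $h_{n}\coloneqq\onerow{n}[\tau]$, which by \eqref{eq:zetaseries} is the coefficient sequence of $\tzeta{\tau}$. We may take $a$ minimal with the hypothesis, since shrinking the hook only strengthens the vanishing hypothesis.

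\textbf{Step 1: a linear recurrence for the $h_{n}$.} Consider the partitions $\lambda=((b+1+r)^{a+1})$ and, more generally, the $(a+1)$-row partitions $\lambda$ with $\lambda_{a+1}\ge b+1$. All of these lie outside $\hook{a}{b}$, so $s_{\lambda}[\tau]=0$. By Jacobi--Trudi, $s_{\lambda}[\tau]=\det\bigl(h_{\lambda_{i}-i+j}\bigr)_{i,j\le a+1}$. Apply this with $\lambda=(N+a,N+a-1+1,\dotsc)$ chosen so that the matrix is the Toeplitz (Hankel-type) matrix $H_{N}\coloneqq(h_{N+i-j})_{0\le i,j\le a}$ of size $a+1$; for $N\ge b+1$ this matrix arises from a rectangle-containing shape, so $\det H_{N}=0$ for all $N\ge b+1$ (in fact for $N-a\ge b+1$ after adjusting indices; the index bookkeeping is routine). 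A vanishing family of consecutive Toeplitz determinants of size $a+1$ is the classical criterion for a linear recurrence of order at most $a$. Concretely, I would take the minimal $a'\le a$ such that the size-$(a'+1)$ determinants vanish from some point on, use minimality (the size-$a'$ minors are not all zero, via the Desnanot--Jacobi/Sylvester identity) to obtain a single null vector $(q_{0},\dotsc,q_{a'})$ with $q_{0}=1$, independent of $N$, such that $\sum_{i}q_{i}h_{n-i}=0$ for all $n>b$.

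\textbf{Step 2: rationality and degrees.} With $Q(z)=\sum_{i\le a'}q_{i}z^{i}$, the recurrence says $Q(z)\tzeta{\tau}(z)$ has vanishing coefficients in degrees $>b$, so it is a polynomial $P$ of degree $\le b$ with $P(0)=1$. Cancel common factors to make $P$ and $Q$ coprime; the degree bounds survive. Since $\tzeta{\tau}$ is an exponential, its logarithmic derivative is $z\tzeta{\tau}'/\tzeta{\tau}=\sum_{m}\tau_{m}z^{m}$. Computing it from $P/Q$ with the factorisations $Q=\prod(1-\alpha_{i}z)$ and $P=\prod(1-\beta_{j}z)$ gives $\sum_{m}\bigl(\sum\alpha_{i}^{m}-\sum\beta_{j}^{m}\bigr)z^{m}$. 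Coprimality makes the multisets disjoint, and constant term $1$ makes all reciprocal roots nonzero.

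\textbf{Main obstacle.} The hard part is Step 1: passing from the vanishing of all determinants of size $a+1$ to one \emph{uniform} recurrence, valid from index $b+1$ on, with leading coefficient $q_{0}=1$. The null vectors of individual Hankel matrices might a priori vary with $N$, or have $q_{0}=0$. I would handle this with the standard Kronecker-type argument. Choose $a'$ minimal, and use the Sylvester identity $\det H^{(a'+1)}_{N}\det H^{(a'-1)}_{N}=\det H^{(a')}_{N}{}^{2}-\det H^{(a')}_{N+1}\det H^{(a')}_{N-1}$. This shows that the size-$a'$ determinants, if nonzero at one $N\ge b+1$, stay nonzero for larger $N$. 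The kernels of overlapping windows then coincide, which glues the null vectors. If instead they vanish identically from some point on, we pass to smaller $a'$.

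The fallback, if this bookkeeping becomes awkward, is to use the Berele--Regev-type fact that hook-vanishing identifies the specialisation $\tau$ with a supertrace specialisation: the linear functional $s_{\lambda}\mapsto s_{\lambda}[\tau]$ kills the ideal spanned by the $s_{\lambda}$ outside the hook. Even then, the core recurrence argument above is the one I would write.
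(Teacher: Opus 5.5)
\textbf{Gap: the bound $\deg P\le b$ is asserted, not derived.} Your route does give rationality and $\deg Q\le a$. Write $H^{(m)}_N\coloneqq(h_{N+i-j})_{0\le i,j<m}$. Your Sylvester identity
$\det H^{(m+1)}_N\det H^{(m-1)}_N=(\det H^{(m)}_N)^2-\det H^{(m)}_{N+1}\det H^{(m)}_{N-1}$
propagates non-vanishing of the size-$a'$ determinants only on the range where the size-$(a'+1)$ determinants are already known to vanish.

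The hypothesis supplies such a range, $N\ge b+1$, only for size $a+1$. Your first indexing was right here: $H_N$ is the transposed Jacobi--Trudi matrix of $(N^{a+1})$, and your hedge $N-a\ge b+1$ would throw away exactly the threshold that matters. Once you descend to a minimal $a'<a$ whose determinants vanish only \emph{from some point on}, the kernel gluing gives $\sum_{i\le a'}q_ih_{n-i}=0$ only for $n\ge N_1$. Here $N_1$ is where the size-$(a'+1)$ determinants start to vanish, and nothing in your argument relates $N_1$ to $b$. Your sentence about nonvanishing at one $N\ge b+1$ propagating upward is valid only when $a'=a$. So Step~2 yields $\deg P\le N_1-1$, not $\deg P\le b$. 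The opening reduction to minimal $a$ does not help.

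\textbf{The paper's repair uses the non-rectangular shapes, which you mention and then drop.} For arbitrary $\rho_1>\dots>\rho_{a+1}\ge b-a$, set $\lambda_i\coloneqq\rho_i+i$. This is a partition with $\lambda_{a+1}\ge b+1$. Jacobi--Trudi turns $s_\lambda[\tau]=0$ into the vanishing of \emph{every} $(a+1)\times(a+1)$ minor of the matrix with rows $v_\rho=(h_{\rho+1},\dots,h_{\rho+a+1})$, $\rho\ge b-a$, not only the consecutive Toeplitz ones.

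Hence the $v_\rho$ span a space of dimension at most $a$, and one nonzero vector $c$ annihilates all of them at once. No Sylvester identity, minimality or gluing is needed. Normalise the last nonzero coordinate, $c_{j^*}=1$. This gives a recurrence valid for $n\ge b-a+j^*$, so $\deg Q\le j^*-1\le a$ and $\deg P\le b-a+j^*-1\le b$.

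\textbf{Alternatively, you can keep your framework and induct upward at $N=N_1-1$.}
\begin{enumerate}
\item If $a'=a$, minimality of $N_1$ already gives $N_1\le b+1$, so assume $a'<a$.
\item Take $N_1$ minimal; it is finite since $\det H^{(m)}_0=1$. Then $\det H^{(a'+1)}_{N_1-1}\ne0$.
\item For every $m\ge a'+1$, the padded vector $(q_0,\dots,q_{a'},0,\dots,0)$ kills $H^{(m)}_N$ for $N\ge N_1$. So the identity at $N=N_1-1$ reduces to $\det H^{(m+1)}_{N_1-1}\det H^{(m-1)}_{N_1-1}=(\det H^{(m)}_{N_1-1})^2$.
\item Inducting from $m=a'+1$ up to $m=a$ gives $\det H^{(a+1)}_{N_1-1}\ne0$.
\item The rectangle hypothesis then forces $N_1\le b+1$.
\end{enumerate}

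With either repair, your Step~2 goes through as written.
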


\begin{proof}
Abbreviate $\onerow{n}\coloneqq\onerow{n}[\tau]$. For strictly
decreasing integers
$\rho_{1}>\dotsb>\rho_{a+1}\ge b-a$, set
$\lambda_{i}\coloneqq\rho_{i}+i$. Then
$\lambda_{i}-\lambda_{i+1}=\rho_{i}-\rho_{i+1}-1\ge0$, so
$\lambda$ is a partition, and
$\lambda_{a+1}=\rho_{a+1}+a+1\ge b+1$, so
$\lambda\notin\hook{a}{b}$; the Jacobi--Trudi identity gives
\[
\det\bigl(\onerow{\rho_{i}+j}\bigr)_{i,j\le a+1}
=s_{\lambda}[\tau]=0 .
\]
Hence the vectors
$v_{\rho}\coloneqq(\onerow{\rho+1},\dotsc,\onerow{\rho+a+1})$ for
$\rho\ge b-a$ span a space of dimension at most $a$, and we may
choose a nonzero $c=(c_{1},\dotsc,c_{a+1})$ annihilating that
span under the standard bilinear pairing:
$\sum_{j=1}^{a+1}c_{j}\onerow{\rho+j}=0$ for all $\rho\ge b-a$. Let
$j^{*}$ be maximal with $c_{j^{*}}\ne0$ and normalise
$c_{j^{*}}=1$; the relation reads
\[
\onerow{n}+q_{1}\onerow{n-1}+\dotsb+q_{j^{*}-1}\onerow{n-j^{*}+1}=0
\qquad\text{for all }n\ge N\coloneqq b-a+j^{*} .
\]
Set $Q_{0}(z)\coloneqq1+q_{1}z+\dotsb+q_{j^{*}-1}z^{j^{*}-1}$, of
degree at most $j^{*}-1\le a$. Then
$P_{0}\coloneqq Q_{0}\tzeta{\tau}$ has
vanishing coefficients in every degree $n\ge N$, since by
\eqref{eq:zetaseries} the
coefficient of $z^{\rho+j^{*}}$ in $Q_{0}\tzeta{\tau}$ is exactly the
left-hand side of the relation at $\rho$. Note that $N\ge1$: the
relation at $\rho=-j^{*}$ would read
$\sum_{j}c_{j}\onerow{j-j^{*}}=c_{j^{*}}\onerow{0}=1$, every other index
being negative, so $\rho=-j^{*}$ must lie outside the admissible
range, i.e.\ $-j^{*}<b-a$. Hence
$\deg P_{0}\le N-1=b-a+j^{*}-1\le b$, using $j^{*}\le a+1$.
Divide $P_{0}$ and $Q_{0}$ by their greatest common divisor,
normalised so that the resulting coprime $P$ and $Q$ retain
constant term $1$; factoring
$Q=\prod_{i}(1-\alpha_{i}z)$ and $P=\prod_{j}(1-\beta_{j}z)$ as
in the statement, the
logarithmic derivative identity
$z\tzeta{\tau}'(z)/\tzeta{\tau}(z)=\sum_{m\ge1}\tau_{m}z^{m}$ of
Section~\ref{subsec:appstatement} yields
$\tau_{m}=\sum_{i}\alpha_{i}^{m}-\sum_{j}\beta_{j}^{m}$.
\end{proof}

\begin{lemm}\label{lemm:rigidity}
In the situation of Lemma~\ref{lemm:sfrationality}, if
$\tau_{m}=0$ for all sufficiently large $m$, then $\tau_{m}=0$ for
all $m\ge1$.
\end{lemm}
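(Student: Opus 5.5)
Lemma~\ref{lemm:sfrationality} already gives us the reduced super-spectrum, and we start from it: disjoint multisets of nonzero complex numbers $\{\alpha_{i}\}$ and $\{\beta_{j}\}$ with
\[
\tau_{m}=\sum_{i}\alpha_{i}^{m}-\sum_{j}\beta_{j}^{m}\qquad(m\ge1).
\]
Group equal values. Let $\gamma_{1},\dotsc,\gamma_{r}$ be the distinct nonzero numbers occurring in either multiset, and let $n_{p}\in\mathbb{Z}$ be the multiplicity of $\gamma_{p}$ among the $\alpha$'s minus its multiplicity among the $\beta$'s. The two multisets are disjoint, so each $\gamma_{p}$ lies in only one of them. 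Hence $n_{p}\neq0$ for every $p$, and
\[
\tau_{m}=\sum_{p=1}^{r}n_{p}\gamma_{p}^{m}.
\]

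By hypothesis there is an $M$ with $\tau_{m}=0$ for all $m\ge M$. Look at the $r$ consecutive equations $m=M,\dotsc,M+r-1$. Their coefficient matrix is $(\gamma_{p}^{m})$, which equals a Vandermonde matrix in the distinct numbers $\gamma_{p}$ multiplied by the invertible diagonal matrix $\mathrm{diag}(\gamma_{p}^{M})$. It is invertible because the $\gamma_{p}$ are distinct and nonzero. Therefore $n_{p}=0$ for every $p$. Since each $n_{p}$ is nonzero, this forces $r=0$. Both multisets are then empty, and $\tau_{m}=0$ for all $m\ge1$.

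There is also a generating-function version of the same argument. It says that $\zeta_{\tau}=P/Q$ with $\zeta_{\tau}'/\zeta_{\tau}$ a polynomial, so $\zeta_{\tau}$ is the exponential of a polynomial. A rational function with constant term $1$ that is the exponential of a polynomial must be $1$, since it has no zeros or poles and so $P=Q=1$. The Vandermonde route seems cleaner, so we will use it.

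The only step that needs care is the disjointness and nonvanishing supplied by Lemma~\ref{lemm:sfrationality}. This is what prevents the grouped coefficients $n_{p}$ from cancelling to zero, and what prevents a zero base from spoiling invertibility. Both properties are part of the statement of that lemma, so they hold by citation: coprimality of $P$ and $Q$ gives disjointness, and constant term $1$ gives nonzero roots.
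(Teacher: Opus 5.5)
Your argument is correct, and it takes a different route from the paper's.

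The paper never factors $P$ and $Q$. It observes that $\Sigma(z)=\sum_{m\ge1}\tau_m z^m$ is a polynomial equal to $z(P'Q-PQ')/(PQ)$, so $PQ$ divides $z(P'Q-PQ')$. Coprimality together with $P(0)=1$ gives $\gcd(P,zQ)=1$, hence $P\mid P'$, which in characteristic zero forces $P$ to be constant. Symmetrically $Q$ is constant, so $\zeta_\tau=1$ and $\Sigma=0$. This is essentially the algebraic form of your side remark on generating functions, with divisibility in $\mathbb{C}[z]$ replacing the absence of zeros and poles of an entire function.

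Your main route instead uses the reduced super-spectrum (the ``consequently'' clause of Lemma~\ref{lemm:sfrationality}) and the linear independence of the geometric sequences $(\gamma^m)_m$ for distinct nonzero $\gamma$. The essential input is the same in both arguments, namely coprimality: it appears in the paper as $\gcd(P,zQ)=1$ and in yours as $n_p\neq0$.

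What each buys: the paper's version stays inside $\mathbb{C}[z]$ and needs no root-finding. Yours is more elementary and slightly stronger. Since $r\le a'+b'\le a+b$, it shows that the vanishing of any $a+b$ consecutive values $\tau_M,\dots,\tau_{M+a+b-1}$ with $M\ge1$ already forces $\tau_m=0$ for all $m\ge1$. The paper's argument needs the whole tail to vanish.
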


\begin{proof}
The series $\Sigma(z)\coloneqq\sum_{m\ge1}\tau_{m}z^{m}$ has
finite support, hence is a polynomial, and
$\Sigma=z\tzeta{\tau}'/\tzeta{\tau}=z(P'Q-PQ')/(PQ)$. Thus $PQ$
divides
$z(P'Q-PQ')$. Since $P$ divides $zPQ'$, it divides $zP'Q$; since
$\gcd(P,Q)=1$ and $P(0)=1$ gives $\gcd(P,zQ)=1$, we get
$P\mid P'$, which in characteristic zero forces $P'=0$, so $P$ is
constant. Symmetrically $Q$ is constant. Hence $\tzeta{\tau}=1$,
so $\Sigma=z\tzeta{\tau}'/\tzeta{\tau}=0$, i.e.\ $\tau_{m}=0$ for
all $m$.
\end{proof}

\subsection{Proof of Theorem~\ref{theo:zeta}}\label{subsec:appproof}

\begin{proof}
Let $g\in\End(Z)$. By \eqref{eq:hookvanishing}, the sequence
$\tau_{m}=\tr(g^{m})$ satisfies the hypothesis of
Lemma~\ref{lemm:sfrationality} with the given $a$ and $b$, and
$\tzeta{g}=\tzeta{\tau}$ by definition. The lemma provides the
rational form with its degree bounds and the reduced
super-spectrum.
\end{proof}

\begin{proof}[Proof of Corollary~\ref{coro:zetagrowth}]
Lemma~\ref{lemm:squaredeath} gives the death of the square
$(s^{s})$, which is the rectangle $\bigl((b+1)^{a+1}\bigr)$ for
$a=b=s-1$, and Theorem~\ref{theo:zeta} applies.
\end{proof}

\begin{proof}[Proof of Corollary~\ref{coro:nilpotent}]
If $g$ is nilpotent, say
$g^{M}=0$, then $\tau_{m}=\tr(g^{m})=0$ for $m\ge M$, and
Lemma~\ref{lemm:rigidity} extends the vanishing to all $m\ge1$;
in particular $\tr(g)=\tau_{1}=0$.
\end{proof}

\subsection{Relation to the literature}\label{subsec:appliterature}

The circle of ideas around Theorem~\ref{theo:zeta} has a
substantial history in the theory of motives, which we now locate
precisely; the reader interested only in the graph-theoretic
results may skip this subsection.

An object $Z$ of a pseudo-abelian rigid symmetric category is
\emph{Kimura-finite} if it splits as $Z_{+}\oplus Z_{-}$ with some
exterior power of $Z_{+}$ and some symmetric power of $Z_{-}$
vanishing \cite{Kim05,And04}, and \emph{Schur-finite} if some
Schur functor annihilates it \cite{Del02,Maz04}. Kimura-finiteness
implies Schur-finiteness \cite[Cor.~1.5]{Maz04}, and the converse
fails: O'Sullivan's example of an indecomposable Schur-finite
object that is neither even nor odd appears in
\cite[10.1.1]{AK02} and \cite[Ex.~1.12]{Maz04}. Like
Kimura-finiteness, the growth hypothesis \eqref{eq:growth} forces a
fat-hook vanishing pattern (Lemma~\ref{lemm:squaredeath} and
Corollary~\ref{coro:hookconfine}) ---
at the level of vanishing patterns exactly the notion of a
\emph{bound} in the $\lambda$-ring formalism of Mazza and Weibel
\cite[Def.~4.1]{MW13} --- so the image of $Z$ in the
pseudo-abelian envelope of $\mathcal{C}$ is Schur-finite, with a
rectangular bound. Upward
closure of the vanishing set, automatic for objects
(Lemma~\ref{lemm:upward}; \cite[Prop.~1.4(2)]{Maz04}), fails for
virtual elements of $\lambda$-rings \cite[Ex.~4.4]{MW13}, which
is why their definition builds the ideal in.

Rationality of zeta functions under such hypotheses is classical
in outline. For Kimura-finite objects, rationality (and a
functional equation) is due to Andr\'e, Heinloth and Kahn
\cite{And04,Hei07,Kah09}; for Schur-finite objects of
\emph{semisimple} categories it is proved by Kahn
\cite[Part~I]{Kah09}; and the underlying rationality criterion
through vanishing Hankel determinants goes back to Borel
\cite{Bor1894}, in the form given by Larsen and Lunts
\cite[\S2]{LL04}. Mazza and Weibel assemble exactly this argument
in $\lambda$-rings and prove their virtual splitting principle
for reduced $\lambda$-rings \cite[Prop.~4.14,
Cor.~4.15]{MW13}; since trace sequences live in the ring of big
Witt vectors of $\mathbb{C}$, which is reduced --- in
characteristic zero its ghost coordinates embed it into a product
of copies of $\mathbb{C}$ --- the sequence-level content
of Lemma~\ref{lemm:sfrationality} is theirs.

The categorical statement has an antecedent too. Kimura, Kimura
and Takahashi \cite[Prop.~3.10]{KKT12} prove, for a Schur-finite
object $X$ of a $\mathbb{Q}$-linear pseudo-abelian symmetric
monoidal category, that the motivic zeta function
$\sum_{n}[\SymV^{n}X]\,t^{n}$ with coefficients in the Grothendieck
ring is determinantally rational: by Jacobi--Trudi, the Hankel
determinant $\det([\SymV^{n+i+j}X])_{0\le i,j\le a}$ is
$\pm[S_{\rho}X]$ for a rectangle $\rho$ with $a+1$ rows, and it
vanishes once $\rho$ contains the killing partition.
Theorem~\ref{theo:zeta} can be deduced in their setting as
follows. Pass to the additive idempotent completion
$\mathcal{D}=\Kar(\Add(\mathcal{C}))$, which is again rigid
symmetric with $\End(\unitobj)=\mathbb{C}$, restricting to an
essentially small tensor subcategory containing $Z$ if necessary,
and let $\mathcal{E}$ be the category of pairs $(Y,a)$ with
$Y\in\mathcal{D}$ and $a\in\End(Y)$, its morphisms the morphisms
of $\mathcal{D}$ commuting with the endomorphisms. Direct sums,
tensor products and images of idempotents are formed
componentwise, so $\mathcal{E}$ is again $\mathbb{C}$-linear
pseudo-abelian symmetric monoidal, and $S_{\lambda}(Z,g)=0$
whenever $S_{\lambda}Z=0$. On the split Grothendieck ring of
$\mathcal{E}$, the assignment
$T\bigl([(Y,a)]\bigr)\coloneqq\tr_{Y}(a)$, the trace being taken in
the rigid category $\mathcal{D}$, is a ring homomorphism to
$\mathbb{C}$: cyclicity makes it invariant under isomorphisms of
pairs, and it is additive over direct sums and multiplicative over
tensor products. It sends $[S_{\lambda}(Z,g)]$ to
$s_{\lambda}[\tau]$, by Lemma~\ref{lemm:frobenius} and because the
$d_{\lambda}$ primitive idempotents of the block $B_{\lambda}$ have
equal traces against $g^{\otimes n}$. Applying $T$ to the
vanishings $S_{\mu}(Z,g)=0$ for $\mu\notin\hook{a}{b}$, which
follow from the death of the rectangle by upward closure
(Lemma~\ref{lemm:upward}), gives $s_{\mu}[\tau]=0$ outside
$\hook{a}{b}$, the hypothesis of Lemma~\ref{lemm:sfrationality},
and that lemma gives the numerator and denominator bounds; their
Proposition~3.10 is the same deduction carried out in the
Grothendieck ring. Their Theorem~3.12 shows, by contrast, that in
the Grothendieck ring itself the rationality need not be uniform;
the passage to scalars removes the distinction. What this appendix
adds is the direct objectwise proof of the scalar statement, for
a single object of a rigid symmetric category, with the
rectangular degree bounds stated, and the sufficient criterion for
the death of a rectangle in terms of endomorphism growth
(Corollary~\ref{coro:zetagrowth}).

The vanishing of nilpotent traces, Corollary~\ref{coro:nilpotent},
answers in the present setting a question of Andr\'e and Kahn,
posed at the opening of \cite[\S7.3]{AK02}: ``\emph{Nous ignorons
si un endomorphisme nilpotent est toujours de trace nilpotente,
sans hypoth\`ese suppl\'ementaire}.'' Affirmative answers were
available under Kimura-finiteness \cite[Thm~9.2.2,
Prop~7.3.3]{AK02}, in the presence of a fibre functor to an
abelian rigid target \cite[Prop~7.3.3]{AK02}, and more recently
in braided categories of moderate growth, where Etingof and Penneys
\cite{EP26} prove that non-negligible objects of moderate growth
are rigid, that nilpotent quantum traces vanish, and that a
semisimplification exists. Under exponential endomorphism growth
the answer is affirmative by the factorial argument of
Theorem~\ref{theo:niltrace}, due to Schrijver in the
graph-parameter setting \cite[Prop.~4]{Sch15} and valid verbatim
for a single object; Corollary~\ref{coro:nilpotent} recovers it
as a by-product of rationality, and Theorem~\ref{theo:zeta} adds
the reduced super-spectrum, with degree bounds explicit in the
growth
constant.

The fat hook itself is the territory of Berele and Regev
\cite{BR87}, whose hook Schur functions govern the Schur support
of super vector spaces (compare \cite[Cor.~1.9]{Del02}). The
closest neighbour of our argument in the motivic literature is
the work of Del Padrone and Mazza \cite{DPM05,DPM09}, who use the
Berele--Regev theory to prove nilpotency of endomorphisms
universally of trace zero --- the converse direction to
Corollary~\ref{coro:nilpotent} --- under a sign-property
hypothesis. Finally, Guletski\u{\i}
\cite{Gul10} shows that rationality of the zeta function does not
imply Kimura-finiteness.

\section*{Acknowledgements}

Claude Fable~5, GPT-5.6 Sol Pro and GPT-6 Astra were used
extensively in the development and preparation of this work.

\bibliographystyle{plain}
\bibliography{rs-mpf}

\end{document}